\newcommand\blfootnote[1]{%
  \begingroup
  \renewcommand\thefootnote{}\footnote{#1}%
  \addtocounter{footnote}{-1}%
  \endgroup
}
\title[Reedy Fibrant Replacements of Projective Fibrant Simplicial Presheaves]{Constructing Reedy Fibrant Replacements of Projective Fibrant Simplicial Presheaves}
\author{Jack Rom{\"o} \\
\textit{Email:} \texttt{\href{mailto:jack.t.romo@gmail.com}{jack.t.romo@gmail.com}}}
\begin{document}

\begin{abstract}
    In this paper, we construct an explicit Reedy fibrant replacement functor for projective fibrant simplicial presheaves $X : \mathscr{C}^{op} \rightarrow \textbf{sSet}$, where $\mathscr{C}$ is a Reedy category. Our approach describes, by hand, all latching maps for the Reedy fibrant replacement by an inductive series of higher homotopies. We explore the nature of our functor by using it to recover some standard homotopy limit constructions.
\end{abstract}
\maketitle

\section{Introduction}

\blfootnote{\hspace{8pt}Supported by the Leeds Doctoral Scholarship from the Faculty of Engineering and Physical Sciences at the University of Leeds, 2020-2024.}
Fibrant replacement is a fundamental operation in model category theory and more general homotopy theory. In this article, we take a particular interest in the case of \emph{Reedy fibrant replacement}, that is fibrant replacement in a Reedy model category $\mathscr{M}^\mathscr{C}$ for a model category $\mathscr{M}$ and Reedy category $\mathscr{C}$. This operation is used for instance in the construction of homotopy limits, though has a greater scope of application when working in a Reedy model category as the fibrant-cofibrant objects are in general better behaved and thus desirable to work with. Many Reedy categories, such as the simplicial category $\Delta$ and the category of $n$-dimensional pasting diagrams $\Theta_n$ are moreover \emph{elegant Reedy}, as defined in \cite{bergnerRezkReedyCategoriesVarTheta2013}, in which case all functors are Reedy cofibrant. Thus, there are many scenarios when one may ignore cofibrant replacement, such as for model categories of Reedy fibrant (complete) $n$-fold Segal spaces and $\Theta_n$-spaces.

The author of this article takes a special interest in the case of \emph{$n$-fold Segal spaces}, that is functors
$$
    X : (\Delta^{op})^n \rightarrow \textbf{sSet}
$$
that are levelwise fibrant and satisfy a series of conditions allowing them to be used as a model for $(\infty, n)$-categories. For instance, given any $i_1, \cdots, i_n \geq 0$, the maps
\begin{align}
    X_{i_1, \cdots, i_j, \cdots, i_n} \rightarrow X_{i_1, \cdots, 1, \cdots, i_n} \times^h_{X_{i_1, \cdots, 0, \cdots, i_n}} \cdots \times^h_{X_{i_1, \cdots, 0, \cdots, i_n}} X_{i_1, \cdots, 1, \cdots, i_n}
\end{align}
must be weak equivalences of Kan complexes. $n$-fold Segal spaces can either be \emph{projective fibrant}, meaning levelwise fibrant, or \emph{Reedy fibrant}. The former case has been used to construct models for $(\infty, n)$-categories of manifolds and cobordisms relevant to topological quantum field theories \cite{calaqueScheimbauerNoteInftyCategory2019, lurieClassificationTopologicalField2009}. The latter, due to the maps in (1) being fibrations, allows in the case $n = 2$ for the direct construction of a \emph{homotopy bicategory} $h_2(X)$ for any Reedy fibrant $2$-fold Segal space $X$ \cite{romoHomotopyBicategories2Fold2023}. Such a construction amounts to collapsing all higher morphisms in an $(\infty, 2)$-category down to path components, resulting in a bicategory. This construction should help compare developments in TQFTs in the worlds of symmetric monoidal $2$-fold Segal spaces, such as in \cite{lurieClassificationTopologicalField2009}, \cite{calaqueScheimbauerNoteInftyCategory2019} and \cite{gradyPavlovGeometricCobordism2022}, and symmetric monoidal bicategories, such as \cite{schommer-priesClassificationTwoDimensionalExtended2014}, \cite{pstragowskiDualizableObjectsMonoidal2014} and \cite{bartlettDouglasSchommerPriesVicaryExtended3DimensionalBordismTheory2014}. To do so requires taking explicit Reedy fibrant replacements of projective fibrant $2$-fold Segal spaces of manifolds and cobordisms present in the literature.

There are many ways to obtain Reedy fibrant replacements present in the literature. The definition of a Reedy model structure has this information baked into a straightforward weak factorization system \cite[pg. 293]{hirschhornModelCategoriesTheir2009}; it can be argued that this is one of the core reasons why Reedy categories exist at all. One can also make use of a small object argument \cite[ch. 12]{riehlCategoricalHomotopyTheory2014}. Some specific cases of Reedy fibrant replacement also exist in the literature for particular applications \cite{ducoulombierFresseTurchinProjectiveReedy2022} \cite[Lemma 6.4.1]{stenzelUnivalenceRezkCompleteness2019}. Unfortunately, none of these approaches are designed to address our requirements. The latter two handle specific cases,  while applying the small-object argument presents a great challenge in understanding the resulting fibrant replacement levelwise, which is not ideal if one seeks to explicitly construct a homotopy bicategory from a given Reedy fibrant replacement. If we wish to use the general weak factorization system for Reedy model structures, we will have to make choices of factorizations in the underlying model category in order to obtain an explicit construction. To this end, completely general methods to factorize maps of simplicial sets, such as employing the small object argument, oftentimes present a challenge in understanding the resulting Reedy fibrant replacements of explicit examples levelwise.

In this paper, we present a functorial construction of a Reedy fibrant replacement $R(X)$ for any projective fibrant simplicial presheaf $X : \mathscr{C} \rightarrow \textbf{sSet}$, where $\textbf{sSet}$ has the Quillen model structure and $\mathscr{C}$ is Reedy.

More precisely, let $(\textbf{sSet}^\mathscr{C})_{proj}$ be the category of projective fibrant functors $\mathscr{C} \rightarrow \textbf{sSet}$. We will prove the following theorem:

\begin{theorem}
    The construction produced inductively in Definitions \ref{def:reedy:inductive0}, \ref{def:reedy:r_constr} and \ref{def:reedy:kappa_inductive} establishes a diagram of functors and natural transformations of the form
    \[\begin{tikzcd}
    	{(\textbf{sSet}^\mathscr{C})_{proj}} && {(\textbf{sSet}^\mathscr{C})_{proj}}
    	\arrow[""{name=0, anchor=center, inner sep=0}, "R"', curve={height=18pt}, from=1-1, to=1-3]
    	\arrow[""{name=1, anchor=center, inner sep=0}, "id", curve={height=-18pt}, from=1-1, to=1-3]
    	\arrow["\kappa"', shorten <=5pt, shorten >=5pt, Rightarrow, from=1, to=0]
    \end{tikzcd}\]
    such that:
    \begin{enumerate}
        \item for any $X \in (\textbf{sSet}^\mathscr{C})_{proj}$, $R(X)$ is Reedy fibrant;

        \item $\kappa_X : X \rightarrow R(X)$ is a levelwise trivial cofibration;

        \item $R$ preserves levelwise weak equivalences and finite products.
    \end{enumerate}
\end{theorem}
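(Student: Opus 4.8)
The plan is to establish all three properties by a single induction along the degree filtration of the Reedy category $\mathscr{C}$, exploiting the fact that the definitions of $R$ and $\kappa$ in Definitions \ref{def:reedy:inductive0}, \ref{def:reedy:r_constr} and \ref{def:reedy:kappa_inductive} are themselves given degree by degree. At degree $0$ the matching object $M_c R(X)$ is terminal, so $R(X)_c = X_c$ and $\kappa_{X,c}$ is the identity; Reedy fibrancy and the trivial-cofibration condition are then immediate from the projective fibrancy of $X$. Assuming $R$ and $\kappa$ have been constructed and shown to satisfy (1) and (2) on all objects of degree $< n$, the matching object $M_c R(X)$ at an object $c$ of degree $n$ is already determined. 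I would then verify the inductive step directly from the explicit form of $R(X)_c$: the construction factors the canonical comparison map $X_c \to M_c R(X)$ (assembled from the matching map of $X$ together with $\kappa$ on lower degrees) as $X_c \xrightarrow{\kappa_{X,c}} R(X)_c \to M_c R(X)$, where the second factor is built from the lower-degree data and a mapping-path-space gadget (the ``series of higher homotopies''). I would check (i) that this second map is a fibration, so that the matching map is a fibration, and (ii) that $\kappa_{X,c}$ is a monomorphism and a weak equivalence, the latter witnessed by the explicit contracting homotopies the construction supplies.

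Naturality of $\kappa$ and functoriality of $R$ would be verified alongside this, since every ingredient of the construction --- the matching limits and the path-space gadget --- is manifestly functorial in $X$. This establishes properties (1) and (2) on all objects, hence on all of $\mathscr{C}$.

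For (3), preservation of levelwise weak equivalences follows formally from (2): given a levelwise weak equivalence $f : X \to Y$, the naturality square for $\kappa$ relating $\kappa_{X,c}, \kappa_{Y,c}, f_c$ and $R(f)_c$ has both of its horizontal maps weak equivalences, so two-out-of-three forces $R(f)_c$ to be a weak equivalence whenever $f_c$ is. Preservation of finite products I would prove by the same induction: matching objects are limits and so commute with finite products, and the path-space gadget commutes with finite products because mapping out into a product is the product of mapping-out spaces, so that $(B \times C)^{\Delta^1} \cong B^{\Delta^1} \times C^{\Delta^1}$. Combining these compatibilities at each stage yields natural isomorphisms $R(X \times Y) \cong R(X) \times R(Y)$ and $R(\ast) \cong \ast$.

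The main obstacle is the inductive step, specifically checking that the explicit higher-homotopy gadget simultaneously produces a genuine fibration onto $M_c R(X)$ and an acyclic cofibration $\kappa_{X,c}$. This forces one to track the coherence of the homotopies against the entire matching diagram: one must show the data assemble so that lifting problems against $M_c R(X)$ are solvable, which is exactly where the combinatorics of the inductively-defined homotopies does the real work. I would also invoke the standard fact that, for a Reedy fibrant diagram, the matching object is a homotopy limit, ensuring that the matching maps arising in the induction are fibrations of fibrant simplicial sets; this keeps every object in sight fibrant and legitimizes the two-out-of-three arguments used in (3).
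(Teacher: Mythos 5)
Your outline reproduces the easier half of the paper's argument but omits the step on which the whole construction turns: the latching maps. Defining $R(X)(c)$ as the mapping path space of $X(c) \rightarrow M_c R(X)$ handles the matching data, but $R(X)$ must be a functor on all of $\mathscr{C}^{\leq n}$, so every degree-raising map into $c$ requires a map $L_c R(X) \rightarrow R(X)(c)$ whose composite with the matching map is the prescribed latching-matching map. Since $L_c R(X)$ is built from the replacement at lower degrees rather than from $X$, there is no canonical map $L_c R(X) \rightarrow X(c)$ to feed into the defining pullback, and ``manifest functoriality in $X$'' does not produce one. The paper resolves this by strengthening the induction: alongside $R^{\leq n-1}$ and $\kappa^{\leq n-1}$ it carries a natural retraction $\alpha^{\leq n-1}_X : R^{\leq n-1}(X)^+ \Rightarrow (X^+)^{\leq n-1}$ defined only on the direct subcategory $\mathscr{C}^+$ (it cannot exist on all of $\mathscr{C}$, else $X$ would be a retract of a Reedy fibrant object and hence already Reedy fibrant --- the paper makes exactly this point), satisfying $\alpha^{\leq n-1}_X(\kappa^{\leq n-1}_X)^+ = 1$, together with a natural homotopy $H^{\leq n-1}_X : (\kappa^{\leq n-1}_X)^+\alpha^{\leq n-1}_X \sim 1$ whose restriction along $\kappa^+$ is the constant homotopy (Proposition \ref{prop:reedy:hkappa_const_htpy}) and which is coherent with the path-of-paths map $D$ (Proposition \ref{prop:reedy:hd=hh}). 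The latching map into the pullback is then assembled from $\mathscr{L}_c \alpha^{\leq n-1}_X$ (the $X(c)$-component) and $\mathscr{L}_c H^{\leq n-1}_X$ (the $(M_c R^{\leq n}(X))^{\textbf{nerve}(I[1])}$-component), with compatibility supplied by Proposition \ref{prop:reedyfib:diag_latchmatch_comms}; even the claim that $\kappa^{\leq n}_X$ is a morphism of diagrams (Proposition \ref{prop: ext of kappa}) depends on the identities $\alpha\kappa^+ = 1$ and $H\kappa^+ = \mathrm{const}$. None of this data, nor the strengthened inductive hypothesis it requires, appears in your proposal; your sentence about ``lifting problems against $M_c R(X)$'' misdiagnoses the difficulty, which is one of constructing structure, not of solving lifting problems.

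Where your proposal and the paper do overlap, the overlap is genuine: the degree-zero base case; fibrancy of the matching map via the Brown-style factorization (the paper secures fibrancy of $M_c R^{\leq n-1}(X)$ by extending $R^{\leq n-1}(X)$ to an auxiliary Reedy fibrant $Q(X)$ and citing Proposition \ref{prop:reedy:matching_fib_inductive}, rather than your looser appeal to matching objects of Reedy fibrant diagrams being homotopy limits); the two-out-of-three argument for preservation of levelwise weak equivalences; and the product argument via commutation of limits and of $(-)^{\textbf{nerve}(I[1])}$ with finite products. One further caution on item (2): the weak-equivalence claim for $\kappa_X(c)$ is not established by an internal ``contracting homotopy'' alone; the paper geometrically realizes the $\textbf{nerve}(I[1])$-homotopy $H$ to obtain $\lvert\kappa\rvert\circ\lvert\alpha\rvert \simeq \mathrm{id}$ and combines this with $\lvert\alpha\rvert\circ\lvert\kappa\rvert = \mathrm{id}$, so here too the retraction $\alpha$ and homotopy $H$ that your proposal never constructs are precisely what is needed.
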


In the case that $\mathscr{C}$ is \emph{elegant Reedy}, $\kappa_X$ will moreover be a trivial Reedy cofibration.

This construction is applied in the author's thesis \cite{romoTowardsAlgebraicNCategories} to obtain the homotopy bicategory of a projective fibrant $2$-fold Segal space. We hope that our work may find similar applications for other models or cases of higher category; we expect our approach will be similarly effective for $\Theta_n$-spaces \cite{rezkCartesianPresentationWeak2010} and the various models of $(\infty, n)$-category encompassed by Barwick and Schommer-Pries' axioms \cite{barwickSchommerPriesUnicityHomotopyTheory2020}.

The paper begins with some background on constructing homotopy pullbacks via Brown's factorization lemma. Next, we discuss some techniques needed to build the higher homotopies necessary for the latching maps in our fibrant replacement functor. We then turn to the construction itself, which is inductive in degree by nature. Finally, we note some useful properties of our construction.

\textbf{Acknowledgements.} The author would like to thank his PhD advisors at the University of Leeds, Jo\~ao Faria Martins and Nicola Gambino, for their support and assistance throughout this project. The author would also like to thank Mike Shulman and Raffael Stenzel for useful e-mail correspondences, Julie Bergner for useful discussions and Charles Rezk and Lukas Waas for helpful discussions during the Panorama of Homotopy Theory conference at the University of Oxford in June 2023.

The contents of this paper are largely taken vertabim from the author's Ph.D. thesis \cite{romoTowardsAlgebraicNCategories}, with minor modifications.
\section{Notation and Conventions}

We will write $\textbf{sSet}$ for the category of simplicial sets and $\Delta$ for the simplicial category. We write $\textbf{sSpace}_n := \textbf{sSet}^{(\Delta^{op})^n}$ for the category of \emph{$n$-uple simplicial spaces}. Let $\textbf{nerve} : \textbf{Cat} \rightarrow \textbf{sSet}$ be the quasicategorical nerve functor.
\section{Homotopy Pullbacks and Path Spaces}

We will have need for a suitable notion of \emph{homotopy pullback} for cospans of simplicial sets. We begin with a consideration of general path spaces in a model category:

\begin{definition}[{\cite[pg. 22]{dwyerSpalinskiHomotopyTheoriesModel1995}}]
    Let $\mathscr{M}$ be a model category and $C \in \mathscr{M}$. A \emph{path object} for $C$ is a factorization
    $$
        C \xrightarrow{d} \mathcal{P}(C) \xrightarrow{p} C \times C
    $$
    of the diagonal map $C \rightarrow C \times C$ such that $d$ is a weak equivalence. It is a \emph{good path object} if $p$ is a fibration and a \emph{very good path object} if, moreover, $d$ is a trivial cofibration.
\end{definition}

The purpose of path objects is often to obtain \emph{right homotopies}, a notion of homotopy amenable to model categories:

\begin{definition}[{\cite[pg. 22]{dwyerSpalinskiHomotopyTheoriesModel1995}}]
    A \emph{right homotopy} $H$ from $f$ to $g$ for $f, g : C \rightarrow D$ in a model category $\mathscr{M}$ is a map
    $$
        H : C \rightarrow \mathcal{P}(D)
    $$
    for some path object $\mathcal{P}(D)$ of $D$ such that the two compositions $C \rightarrow \mathcal{P}(D) \rightarrow D$ result in $f$ and $g$ respectively. It is called a \emph{good} or \emph{very good} right homotopy if $\mathcal{P}(D)$ is a good or very good path object, respectively.
\end{definition}

We will encounter so-called \emph{left homotopies} later in this thesis, which are somehow dual to right homotopies.

There are two path objects we will take an interest in for $\textbf{sSet}$, neither of which we claim originality for. The first of these appears for instance in \cite[Def. 1.1.23]{riehlVerityElementsCategoryTheory2022}. Before we present it, we need a quick definition:

\begin{definition} \label{defn_h1_i_n}
    Let $I[n]$ be the groupoid whose object set is $[n]$ and with all hom-sets singletons.
\end{definition}

For some topological connection to this object, consider the following standard construction:

\begin{definition}[{\cite[ch. 2]{may1999AConciseCourse}}] \label{defn_hpb_pi1}
    The \emph{fundamental groupoid} functor $\Pi_1 : \textbf{Top} \rightarrow \textbf{Grpd}$ sends a topological space $X$ to the groupoid $\Pi_1(X)$ whose objects are points in $X$ and whose morphisms are homotopy classes of paths relative to start and end points.
\end{definition}

Then, in particular, one might notice that $I[n]$ is isomorphic to a full subcategory of the fundamental groupoid $\Pi_1(\Delta_t[n])$ of the $n$-simplex whose objects are the `corners' of $\Delta_t[n]$, namely those points $(x_0, \cdots, x_n)$ where some $x_i = 1$. This is equivalent to the entirety of $\Pi_1(\Delta_t[n])$ along the inclusion functor, a fact quickly proven by the $n$-simplex being contractible. We might therefore consider $I[1]$ as a finite model for a contractible homotopy type with two points and $I[n]$ similarly for $n+1$ points.

The extension of these ideas to simplicial sets for us will be the space $\textbf{nerve}(I[1])$:

\begin{definition}
    Suppose $X \in \textbf{sSet}$ is a Kan complex. Then define $I := \textbf{nerve}(I[1])$ and in turn $X^I$ to be the space
    $$
        X \xrightarrow{c_X} X^I := X^{\textbf{nerve}(I[1])} \xrightarrow{\langle s_X, t_X \rangle} X \times X
    $$
    where the maps $c_X : X \rightarrow X^I$ and $s_X, t_X : X^I \rightarrow X$ are given by applying $X^{\textbf{nerve}(-)}$ to the natural maps
    $$
        \ast \sqcup \ast \rightarrow I[1] \rightarrow \ast
    $$
    in $\textbf{Cat}$. We will refer to $c_X$ as the \emph{constant path map}, $s_X$ as the \emph{source map} and $t_X$ as the \emph{target map}.
\end{definition}

Our second path object is built upon the pushout of the natural span $\textbf{nerve}(I[1]) \leftarrow \ast \rightarrow \textbf{nerve}(I[1])$, where the leftmost map identifies the object $1$ while the rightmost map identifies $0$:

\begin{definition}
    Suppose $X \in \textbf{sSet}$ is a Kan complex. Then define
    $\Lambda := \textbf{nerve}(I[1]) \sqcup_\ast \textbf{nerve}(I[1])$ and in turn $X^\Lambda$ to be the space
    $$
        X \xrightarrow{\lambda_X} X^\Lambda := X^{\textbf{nerve}(I[1]) \sqcup_\ast \textbf{nerve}(I[1])} \xrightarrow{\langle \sigma_X, \tau_X \rangle} X \times X
    $$
    where the maps $\lambda_X : X \rightarrow X^\Lambda$ and $\sigma_X, \tau_X : X^\Lambda \rightarrow X$ are given by the natural maps
    $$
        \ast \sqcup \ast \rightarrow \textbf{nerve}(I[1]) \sqcup_\ast \textbf{nerve}(I[1]) \rightarrow \ast
    $$
    in $\textbf{sSet}$. We will refer to $\lambda_X$ as the \emph{constant path map}, $\sigma_X$ as the \emph{source map} and $\tau_X$ as the \emph{target map}.
\end{definition}

Note that $X^\Lambda$ does not result from some diagram in $\textbf{Cat}$; indeed, appending two copies of $I[1]$ along $\ast$ in $\textbf{Cat}$ would yield $I[2]$ instead, rather than the above simplicial set. We can see $\Lambda$ as a `horn' of sorts, though where each 1-simplex has an inverse.

\begin{proposition} \label{prop:reedy:xi_path_obj}
    If $X$ is a Kan complex, then $X^I$ is a very good path object for $X$.
\end{proposition}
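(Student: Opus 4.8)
The plan is to verify directly the three conditions defining a very good path object, using the pushout–product (SM7) axiom for the Cartesian closed Quillen model structure on $\textbf{sSet}$ together with the fact that $I = \textbf{nerve}(I[1])$ is a contractible Kan complex. The entire argument amounts to transporting the elementary structure of the maps $\ast \sqcup \ast \to I[1] \to \ast$ through the contravariant cotensor functor $X^{(-)}$ and reading off the model-categorical consequences.

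First I would record the properties of $I$ and of the relevant maps of simplicial sets. Since the nerve of any groupoid is a Kan complex and $I[1]$ is equivalent to the terminal category, $I$ is a contractible Kan complex; consequently each vertex inclusion $v_0, v_1 : \Delta[0] \to I$ (the nerves of the two functors $\ast \to I[1]$) is a monomorphism and a weak equivalence, hence a trivial cofibration, while $j : \Delta[0] \sqcup \Delta[0] \to I$ is a monomorphism and the unique map $u : I \to \Delta[0]$ satisfies $u \circ v_0 = \mathrm{id}_{\Delta[0]}$.

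Next I would check the factorization. Because $X^{(-)}$ is contravariant, we have $c_X = X^u$, $\langle s_X, t_X\rangle = X^j$ and, say, $s_X = X^{v_0}$. Since $u \circ j : \Delta[0]\sqcup\Delta[0] \to \Delta[0]$ is the fold map, applying $X^{(-)}$ shows $\langle s_X, t_X\rangle \circ c_X = X^{u\circ j}$ is the diagonal $X \to X\times X$, so we genuinely have a factorization of the diagonal. The same bookkeeping gives $s_X \circ c_X = X^{u \circ v_0} = X^{\mathrm{id}} = \mathrm{id}_X$, exhibiting $c_X$ as a section of $s_X$ and, in particular, as a split monomorphism, hence a cofibration in $\textbf{sSet}$.

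It then remains to invoke SM7 in its cotensor form: for a cofibration $i : A \to B$ and the fibration $X \to \ast$ (valid as $X$ is a Kan complex), the induced map $X^B \to X^A$ is a fibration, trivial whenever $i$ is. Applying this to $j$ shows that $\langle s_X, t_X\rangle : X^I \to X \times X$ is a fibration, so the path object is good; applying it to the trivial cofibration $v_0$ shows that $s_X : X^I \to X$ is a trivial fibration, whence by two-out-of-three the section $c_X$ is a weak equivalence. Combined with $c_X$ being a split monomorphism, this makes $c_X$ a trivial cofibration and $X^I$ a very good path object. I expect the only real subtlety to lie in confirming that $v_0$ is a weak equivalence—that is, the contractibility of $I$—and in keeping the variances straight so that the fold and vertex maps induce the diagonal and the splitting respectively; the model-categorical conclusions are then immediate from SM7.
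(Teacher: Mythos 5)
Your proof is correct and follows essentially the same route as the paper's: both obtain the fibration $X^I \to X \times X$ by applying the simplicial (SM7/cotensor) axiom to the cofibration $\ast \sqcup \ast \hookrightarrow \textbf{nerve}(I[1])$, and both deduce that $c_X$ is a trivial cofibration by observing that $\ast \to \textbf{nerve}(I[1])$ is a trivial cofibration, hence $s_X$ is a trivial fibration, and then using the section $s_X \circ c_X = \mathrm{id}_X$ together with 2-out-of-3. Your additional bookkeeping (verifying the factorization of the diagonal and noting $c_X$ is a split monomorphism rather than just a levelwise inclusion) is a harmless elaboration of the same argument.
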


\begin{proof}
    The map $X^I \rightarrow X \times X$ being a fibration follows from the inclusion of objects $\ast \sqcup \ast \hookrightarrow \textbf{nerve}(I[1])$ being a cofibration and \cite[Ex. 9.1.13]{hirschhornModelCategoriesTheir2009}, which states that $\textbf{sSet}$ has a natural simplicial model structure.

    The map $X \rightarrow X^I$ is clearly a cofibration, as it is a levelwise inclusion. To show it is a weak equivalence, note that each map $\ast \rightarrow \textbf{nerve}(I[1])$ is a trivial cofibration, so that the two maps $X^I \rightarrow X$ are trivial fibrations. The composites $X \rightarrow X^I \rightarrow X$ are the identity, so by 2-out-of-3 we have a trivial cofibration as needed.
\end{proof}

\begin{proposition}
    If $X$ is a Kan complex, then $X^\Lambda$ is a very good path object for $X$.
\end{proposition}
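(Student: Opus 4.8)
The plan is to mirror the proof of Proposition \ref{prop:reedy:xi_path_obj} almost verbatim, the only genuinely new input being the contractibility of $\Lambda$. The factorization to be verified is $X \xrightarrow{\lambda_X} X^\Lambda \xrightarrow{\langle \sigma_X, \tau_X \rangle} X \times X$, and I must show $\langle \sigma_X, \tau_X \rangle$ is a fibration while $\lambda_X$ is a trivial cofibration.

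First I would establish that $\langle \sigma_X, \tau_X \rangle : X^\Lambda \to X \times X$ is a fibration. As before, this reduces to checking that the inclusion of the two non-glued endpoints $\ast \sqcup \ast \hookrightarrow \Lambda$ is a cofibration of simplicial sets; since it is a levelwise injection it is a monomorphism and hence a cofibration, so applying the cotensor $X^{(-)}$ and invoking the simplicial model structure on $\textbf{sSet}$ from \cite[Ex. 9.1.13]{hirschhornModelCategoriesTheir2009} yields the desired fibration onto $X^{\ast \sqcup \ast} = X \times X$. Next, $\lambda_X : X \to X^\Lambda$ is a cofibration: the structural map $\Lambda \to \ast$ admits sections (any choice of vertex), so $\lambda_X$ is a split monomorphism, hence a levelwise inclusion and therefore a cofibration.

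The substantive step is to show $\lambda_X$ is a weak equivalence, and here I would argue that each endpoint inclusion $\ast \to \Lambda$ is in fact a trivial cofibration. This is where the defining pushout $\Lambda = \textbf{nerve}(I[1]) \sqcup_\ast \textbf{nerve}(I[1])$ does the work: each leg $\ast \to \textbf{nerve}(I[1])$ is a trivial cofibration (as recorded in the previous proof), and trivial cofibrations are stable under cobase change, so each copy $\textbf{nerve}(I[1]) \to \Lambda$ is again a trivial cofibration. Composing with the relevant endpoint inclusion $\ast \to \textbf{nerve}(I[1])$, which is itself a trivial cofibration, shows that both non-glued endpoints $\ast \to \Lambda$ are trivial cofibrations. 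Since $X$ is a Kan complex, the simplicial model structure then makes both $\sigma_X$ and $\tau_X$ trivial fibrations. Finally, the composite $\sigma_X \circ \lambda_X$ is $X^{(-)}$ applied to $\ast \to \Lambda \to \ast = \mathrm{id}_\ast$, hence the identity, so $2$-out-of-$3$ forces $\lambda_X$ to be a weak equivalence; together with the cofibration property this makes it a trivial cofibration, completing the verification that $X^\Lambda$ is a very good path object.

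The one place requiring care — the main obstacle — is the claim that the glued object $\Lambda$ remains contractible, i.e. that the endpoint inclusions stay trivial cofibrations after the pushout. The remark preceding the proposition warns that $\Lambda$ is not the nerve of any category (gluing in $\textbf{Cat}$ would produce $I[2]$ instead), so one cannot simply import contractibility from a groupoid; instead the cobase-change stability of trivial cofibrations in $\textbf{sSet}$ is exactly what licenses the argument. I would therefore be careful to set up the two pushout legs so that the endpoint used by $\sigma_X$ (respectively $\tau_X$) is precisely the one not identified by the gluing, ensuring the composite $\ast \to \textbf{nerve}(I[1]) \to \Lambda$ lands on the correct vertex in each case.
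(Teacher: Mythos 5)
Your proof is correct and follows essentially the same route as the paper's: the fibration claim via the cofibration $\ast \sqcup \ast \hookrightarrow \Lambda$ and the simplicial model structure, and the key step showing each endpoint inclusion $\ast \to \Lambda$ is a trivial cofibration by cobase-change stability applied to the defining pushout, followed by composition and 2-out-of-3 against the identity. The only cosmetic difference is your split-monomorphism justification that $\lambda_X$ is a cofibration, which the paper treats as immediate.
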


\begin{proof}
    The proof is similar to $X^I$. Again, $X^\Lambda \rightarrow X \times X$ is a fibration since $\ast \sqcup \ast \hookrightarrow \Lambda$ is a cofibration. Moreover, the constant path map $X \rightarrow X^\Lambda$ is immediately a cofibration. For it to be trivial, it again suffices to show the two maps $\ast \rightarrow \Lambda$ are trivial cofibrations. Considering the pushout diagram
    \[\begin{tikzcd}
    	& \ast & {\textbf{nerve}(I[1])} & \ast \\
    	\ast & {\textbf{nerve}(I[1])} & \Lambda
    	\arrow[from=1-2, to=1-3]
    	\arrow["i"', from=1-2, to=2-2]
    	\arrow["j", from=1-3, to=2-3]
    	\arrow["k"', from=1-4, to=1-3]
    	\arrow[from=2-1, to=2-2]
    	\arrow[from=2-2, to=2-3]
    \end{tikzcd}\]
    we see that since $i$ is a trivial cofibration, $j$ must be one too. Since $k$ is a trivial cofibration, the composite $jk$ is a trivial cofibration as needed. A similar story holds for the horizontal maps.
\end{proof}

The method to construct homotopy pullbacks we employ is standard; indeed, given any good path object in a model category, one can produce a model of homotopy pullback from said path object in a similar way \cite[Cor. 3.4]{nlabHomotopyPullback}. We will however take a perhaps unusual definition of `homotopy pullback' in $\textbf{sSet}$ within the confines of this approach, electing to use $\Lambda$ rather than $I$. We do not require much specific insight into homotopy limits; for those interested, one may consult \cite[ch. 18-19]{hirschhornModelCategoriesTheir2009} or \cite[ch. 5]{riehlCategoricalHomotopyTheory2014}.

\begin{definition}
    Suppose $f : X \rightarrow Y \leftarrow Z : g$ is a cospan of Kan complexes in $\textbf{sSet}$. Define the \emph{homotopy pullback} of this cospan to be
    $$
        X \times^h_Y Z := X \times_Y Y^\Lambda \times_Y Z,
    $$
    where the limit is taken over the diagram
    $$
        X \xrightarrow{f} Y \xleftarrow{\sigma_Y} Y^\Lambda \xrightarrow{\tau_Y} Y \xleftarrow{g} Z.
    $$
\end{definition}

A more standard definition would perhaps be $X \times_Y Y^I \times_Y Z$ or something analogous, such as $X \times_Y Y^{\textbf{Sing}([0, 1])} \times_Y Z$, which are both perfectly serviceable in the scenario of Kan complexes. All of these differ solely by the choice of path object used. We employ the above as it will be the easiest to cohere with the results of our upcoming computations.

As a show of good faith, we should prove this is indeed a reasonable notion of homotopy pullback. We will first need a result about factorizing maps between Kan complexes that will prove crucial in our work on Reedy fibrant replacement later. This result is entirely standard; for instance, both its statement and much of its proof are a special case of the \emph{factorization lemma} in \cite{brownAbstractHomotopyTheoryGeneralized1973}. Brown states in the proof thereof that this derives from standard methods in homotopy theory.

\begin{lemma}\label{lemma:reedy:sset_path_fib}
    Suppose $f : X \rightarrow Y$ is a map in $\textbf{sSet}$ between Kan complexes $X$ and $Y$. Then the induced map
    $$
        X \times_Y Y^{\textbf{nerve}(I[1])} \rightarrow Y
    $$
    by the uppermost horizontal maps in the diagram
    \[\begin{tikzcd}
    	{X \times_Y Y^{\textbf{nerve}(I[1])}} & {Y^{\textbf{nerve}(I[1])}} & Y \\
    	X & Y
    	\arrow[from=1-1, to=1-2]
    	\arrow[from=1-1, to=2-1]
    	\arrow["\lrcorner"{anchor=center, pos=0.125}, draw=none, from=1-1, to=2-2]
    	\arrow["{t_Y}", from=1-2, to=1-3]
    	\arrow["{s_Y}", from=1-2, to=2-2]
    	\arrow["f"', from=2-1, to=2-2]
    \end{tikzcd}\]
    is a fibration. Moreover, the map $X \rightarrow X \times_Y Y^{\textbf{nerve}(I[1])}$, given by $1_X$ and $X \xrightarrow{f} Y \xrightarrow{c_Y} Y^{\textbf{nerve}(I[1])}$, is a trivial cofibration.
\end{lemma}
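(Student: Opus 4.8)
The plan is to recognise this lemma as an instance of Brown's factorization lemma and to exploit the very good path object structure on $Y^I := Y^{\textbf{nerve}(I[1])}$ established in Proposition \ref{prop:reedy:xi_path_obj}. The inputs I would draw on are: that $\langle s_Y, t_Y \rangle : Y^I \to Y \times Y$ is a fibration; that both $s_Y$ and $t_Y$ are trivial fibrations (as noted in the proof of that proposition, since each $\ast \to \textbf{nerve}(I[1])$ is a trivial cofibration); the identity $s_Y \circ c_Y = 1_Y$; and the standard facts that (trivial) fibrations in $\textbf{sSet}$ are stable under pullback and composition, that cofibrations are exactly the monomorphisms, and that a projection off a product with a fibrant object is a fibration.

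For the fibration claim, I would not attack the map $X \times_Y Y^I \to Y$ directly but instead decompose it. First I would observe that the map $(x, \gamma) \mapsto (x, t_Y(\gamma))$, from $X \times_Y Y^I$ to $X \times Y$, is precisely the pullback of the fibration $\langle s_Y, t_Y \rangle : Y^I \to Y \times Y$ along $f \times 1_Y : X \times Y \to Y \times Y$: the pullback consists of triples $((x,y),\gamma)$ with $f(x) = s_Y(\gamma)$ and $y = t_Y(\gamma)$, and eliminating the redundant coordinate $y = t_Y(\gamma)$ identifies it with $X \times_Y Y^I$. Hence this map is a fibration. Composing with the projection $X \times Y \to Y$ onto the second factor — a fibration because $X$ is a Kan complex, so this projection is the pullback of $X \to \ast$ along $Y \to \ast$ — yields exactly the map $(x,\gamma) \mapsto t_Y(\gamma)$ to $Y$, which is therefore a fibration as a composite of fibrations.

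For the trivial cofibration claim, write $i : X \to X \times_Y Y^I$ for the map $x \mapsto (x, c_Y(f(x)))$; this is well-defined into the pullback since $s_Y(c_Y(f(x))) = f(x)$. Because its first coordinate is the identity, $i$ is levelwise injective, hence a cofibration. To see it is a weak equivalence, I would use the first-factor projection $p : X \times_Y Y^I \to X$. This $p$ is the pullback of $s_Y : Y^I \to Y$ along $f$, and since $s_Y$ is a trivial fibration so is $p$; in particular $p$ is a weak equivalence. As $p \circ i = 1_X$, two-out-of-three forces $i$ to be a weak equivalence, so $i$ is a trivial cofibration.

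The argument is essentially formal, so I do not anticipate a serious obstacle; the main care needed is bookkeeping — correctly identifying the displayed map to $Y$ as the composite of a pulled-back path fibration with a projection, and keeping straight which of the two evaluation maps ($s_Y$ versus $t_Y$) governs the pullback defining $X \times_Y Y^I$ (namely $s_Y$) as opposed to the map out to $Y$ (namely $t_Y$).
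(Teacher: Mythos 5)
Your proof is correct and follows essentially the same route as the paper's: both identify $X \times_Y Y^{\textbf{nerve}(I[1])}$ with $(X \times Y) \times_{Y \times Y} Y^{\textbf{nerve}(I[1])}$, deduce the fibration claim by composing the pulled-back path-space fibration with the product projection $X \times Y \to Y$, and then get the trivial cofibration claim from $p \circ i = 1_X$ and two-out-of-three. The only difference is that you spell out why the retraction $p$ is a trivial fibration (pullback of $s_Y$ along $f$), a detail the paper leaves implicit.
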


\begin{proof}
    Consider the diagram
    \[\begin{tikzcd}
    	{X \times_Y Y^{\textbf{nerve}(I[1])}} & {(X \times Y) \times_{Y \times Y} Y^{\textbf{nerve}(I[1])}} & {Y^{\textbf{nerve}(I[1])}} \\
    	Y & {X \times Y} & {Y \times Y}
    	\arrow["\cong", from=1-1, to=1-2]
    	\arrow["t"', two heads, from=1-1, to=2-1]
    	\arrow[from=1-2, to=1-3]
    	\arrow[two heads, from=1-2, to=2-2]
    	\arrow["\ulcorner"{anchor=center, pos=0.125}, draw=none, from=1-2, to=2-3]
    	\arrow[two heads, from=1-3, to=2-3]
    	\arrow[two heads, from=2-2, to=2-1]
    	\arrow["{f \times 1_Y}"', from=2-2, to=2-3]
    \end{tikzcd}\]
    By Proposition \ref{prop:reedy:xi_path_obj}, the rightmost vertical map is a fibration, so the pullback map is a fibration. Projections from products of fibrant objects are also fibrations, meaning the map $t$ is a fibration as needed.

    Note then that the composite map $X \rightarrow X \times_Y Y^{\textbf{nerve}(I[1])} \rightarrow X$ is $1_X$. Because the map $X \times_Y Y^{\textbf{nerve}(I[1])} \rightarrow X$ is a trivial fibration, by 2-out-of-3 the map $X \rightarrow X \times_Y Y^{\textbf{nerve}(I[1])}$ is a weak equivalence. That it is a cofibration is immediate.
\end{proof}

\begin{proposition}
    Let $f : X \rightarrow Y \leftarrow Z : g$ be a cospan of Kan complexes in $\textbf{sSet}$. Then $X \times^h_Y Z$ is a homotopy pullback of this cospan.
\end{proposition}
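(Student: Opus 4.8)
The plan is to realize $X \times^h_Y Z$ as the ordinary pullback of a fibrant replacement of $f$ and then appeal to the standard model-categorical recipe for homotopy pullbacks recorded in \cite[Cor. 3.4]{nlabHomotopyPullback}. Concretely, a homotopy pullback of $X \xrightarrow{f} Y \xleftarrow{g} Z$ may be computed by factoring $f$ as a weak equivalence followed by a fibration $X \xrightarrow{\sim} \widehat{X} \twoheadrightarrow Y$ and forming the strict pullback $\widehat{X} \times_Y Z$; since every object here is a Kan complex and $\textbf{sSet}$ is right proper, this pullback is independent, up to weak equivalence, of the chosen factorization, and in particular of the choice of good path object used to produce it. It therefore suffices to exhibit $X \times_Y Y^\Lambda \xrightarrow{\tau_Y} Y$ as such a fibrant replacement of $f$ and to identify the resulting pullback with $X \times^h_Y Z$.

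First I would establish the $\Lambda$-analogue of Lemma \ref{lemma:reedy:sset_path_fib}. The proof given there uses only that $Y^{\textbf{nerve}(I[1])} \rightarrow Y \times Y$ is a fibration and that the constant path map is a trivial cofibration; both facts hold verbatim for $Y^\Lambda$, the map $\langle \sigma_Y, \tau_Y \rangle$ and the map $\lambda_Y$, by the preceding proposition showing that $Y^\Lambda$ is a very good path object. Running the identical argument yields that the composite $t : X \times_Y Y^\Lambda \rightarrow Y^\Lambda \xrightarrow{\tau_Y} Y$ is a fibration and that the map $X \rightarrow X \times_Y Y^\Lambda$ given by $\langle 1_X, \lambda_Y \circ f \rangle$ is a trivial cofibration; here the inner pullback $X \times_Y Y^\Lambda$ is formed along $f$ and $\sigma_Y$, so this map is well defined because $\sigma_Y \circ \lambda_Y = 1_Y$. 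Finally, $t \circ \langle 1_X, \lambda_Y \circ f \rangle = \tau_Y \circ \lambda_Y \circ f = f$, since $\tau_Y \circ \lambda_Y = 1_Y$, so that $X \xrightarrow{\sim} X \times_Y Y^\Lambda \xrightarrow{t} Y$ is a factorization of $f$ into a trivial cofibration followed by a fibration.

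With this in hand I would form the strict pullback of the fibration $t$ along $g$. By construction this is $(X \times_Y Y^\Lambda) \times_Y Z$, the outer pullback being taken along $\tau_Y$ and $g$, which is precisely the limit $X \times_Y Y^\Lambda \times_Y Z = X \times^h_Y Z$ of the Definition; by the recipe recalled above it is therefore a homotopy pullback of the cospan. The only genuine obstacle is this final homotopy-invariance step: one must know that the strict pullback of a fibrant replacement of $f$ really does model the homotopy pullback and does not depend on the somewhat idiosyncratic choice of the path object $\Lambda$ in place of the more usual $Y^{\textbf{nerve}(I[1])}$ or $Y^{\textbf{Sing}([0, 1])}$. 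This rests on the right-properness of $\textbf{sSet}$—the pullback of a weak equivalence along a fibration is again a weak equivalence—which compares the $\Lambda$- and $I$-models over $Y$; everything else is the formal assembly of the two pullbacks together with the adapted factorization lemma.
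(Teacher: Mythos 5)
Your proof is correct, but it takes a genuinely different route from the paper's. You replace only \emph{one} leg of the cospan, factoring $f$ as the trivial cofibration $\langle 1_X, \lambda_Y \circ f\rangle : X \rightarrow X \times_Y Y^\Lambda$ followed by the fibration $t = \tau_Y \circ \mathrm{pr} : X \times_Y Y^\Lambda \rightarrow Y$ (your $\Lambda$-analogue of Lemma \ref{lemma:reedy:sset_path_fib} does go through verbatim, since the only inputs are that $Y^\Lambda \rightarrow Y \times Y$ is a fibration and that $\sigma_Y$ is a trivial fibration), and then you invoke the standard recipe---strict pullback of a one-leg fibrant replacement, justified by right properness or, equivalently here, by all objects being Kan---to conclude. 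The paper instead replaces \emph{both} legs, forming the cospan $X \times_Y Y^{I} \rightarrow Y \leftarrow Y^{I} \times_Y Z$ with the $I$-path object on each side, and argues that this cospan is injective fibrant by viewing the cospan shape as a Reedy category with no maps raising degree, so that Reedy and injective fibrancy coincide and Lemma \ref{lemma:reedy:sset_path_fib} applies to each leg; the identification of the strict limit of that cospan with $X \times^h_Y Z$ is then implicit in the isomorphism $Y^\Lambda \cong Y^{I} \times_Y Y^{I}$ coming from $\Lambda$ being a pushout of two copies of $\textbf{nerve}(I[1])$. Each approach has its advantages: yours is more economical with the pullback bookkeeping---the defined object appears on the nose as the strict pullback of your factorization---at the cost of citing properness (or the fibrant-objects version of the recipe) as a black box; the paper's symmetric two-sided replacement explains \emph{why} $\Lambda$ rather than $I$ appears in the definition of $X \times^h_Y Z$, and its reduction to Reedy fibrancy of diagrams over the cospan category is the same mechanism the paper exploits throughout its main construction, so it doubles as a warm-up for the general argument.
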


\begin{proof}
    It suffices to prove that the cospan
    $$
        X \times_Y Y^I \rightarrow Y \leftarrow Y^I \times_Y Z
    $$
    is injective fibrant as a diagram $\mathscr{K} \rightarrow \textbf{sSet}$, where $\mathscr{K}$ is the natural cospan diagram.
    
    Note that $\mathscr{K}$ may be interpreted as a Reedy category $a \rightarrow b \leftarrow c$, where $\textbf{deg}(a) = \textbf{deg}(c) = 1$ and $\textbf{deg}(b) = 0$. As there are no maps of positive degree, the Reedy cofibrations in $\textbf{sSet}^\mathscr{K}$ are injective cofibrations, so Reedy and injective fibrancy coincide. It thus suffices to prove that the maps $X \times_Y Y^I \rightarrow Y$ and $Y^I \times_Y Z \rightarrow Y$ are fibrations, which is immediate by Lemma \ref{lemma:reedy:sset_path_fib}.
\end{proof}
\section{Paths in Path Spaces}

For the purposes of this chapter, we will find utility in a few minor constructions worth establishing before we proceed further. In particular, we will need a standard homotopy of path spaces that retracts a path down to its source. We visualize this homotopy as, given a path $p: \textbf{nerve}(I[1]) \rightarrow X$, producing a path of paths $D_X(p) : \textbf{nerve}(I[1]) \rightarrow X^{\textbf{nerve}(I[1])}$ of the form
\[\begin{tikzcd}
	&& {p(1)} \\
	{p(0)} && {p(0)}
	\arrow[squiggly, from=2-1, to=1-3]
	\arrow[squiggly, from=2-1, to=2-3]
	\arrow[""{name=0, anchor=center, inner sep=0}, "p"', squiggly, from=2-3, to=1-3]
	\arrow[shorten >=11pt, Rightarrow, squiggly, from=2-1, to=0]
\end{tikzcd}\]
More formally, this can be given by a map
$$
     \textbf{nerve}(I[1]) \times \textbf{nerve}(I[1]) \cong \textbf{nerve}(I[1] \times I[1]) \xrightarrow{\textbf{nerve}(Q)} \textbf{nerve}(I[1]) \xrightarrow{p} X
$$
where $Q : I[1] \times I[1] \rightarrow I[1]$ is the map sending $(i, j) \mapsto i \times j$ for $i, j \in \{0, 1\} = \textbf{ob}(I[1])$. Visually, if we depict $I[1] \times I[1]$ as the category
\[\begin{tikzcd}
	{(0, 0)} && {(1, 0)} \\
	{(0, 1)} && {(1, 1)}
	\arrow["\cong", from=1-1, to=1-3]
	\arrow["\cong"', from=1-1, to=2-1]
	\arrow["\cong", from=1-3, to=2-3]
	\arrow["\cong"', from=2-1, to=2-3]
\end{tikzcd}\]
then its image under $Q$ corresponds to the diagram in $I[1]$ of the form
\[\begin{tikzcd}
	0 && 0 \\
	0 && 1
	\arrow["{1_0}", from=1-1, to=1-3]
	\arrow["{1_0}"', from=1-1, to=2-1]
	\arrow["\cong", from=1-3, to=2-3]
	\arrow["\cong"', from=2-1, to=2-3]
\end{tikzcd}\]
This evidently induces the desired homotopy of paths; the image of the leftmost vertical map under postcomposition with $p$ will be the constant path on $p(0)$, while the image of the rightmost will be $p$ itself.

We formalize this discussion as follows:

\begin{definition}
    Suppose $B \in \textbf{sSet}$. Then define
    $$
        D_B : B^{\textbf{nerve}(I[1])} \rightarrow \big( B^{\textbf{nerve}(I[1])} \big)^{\textbf{nerve}(I[1])}
    $$
    to be the map defined by precomposing the isomorphism
    $$
        B^{\textbf{nerve}(I[1] \times I[1])} \cong \big( B^{\textbf{nerve}(I[1])} \big)^{\textbf{nerve}(I[1])}
    $$
    with $B^{\textbf{nerve}(Q)}$, where $Q : I[1] \times I[1] \rightarrow I[1]$ is the map defined such that 
    $$
        Q(i, j) := i \times j
    $$
    for $i, j \in \{0, 1\} = \textbf{ob}(I[1])$.
\end{definition}

\begin{proposition}
    $D^B$ defines a very good right homotopy to $1_{B^{\textbf{nerve}(I[1])}}$ from the map
    $$
        B^{\textbf{nerve}(I[1])} \xrightarrow{s_B} B \xrightarrow{c_B} B^{\textbf{nerve}(I[1])}.
    $$
\end{proposition}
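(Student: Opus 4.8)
The plan is to verify the two defining properties of a very good right homotopy separately: that the codomain of $D_B$ is a very good path object for $B^{\textbf{nerve}(I[1])}$, and that the source and target legs of this path object, when precomposed with $D_B$, return $c_B \circ s_B$ and $1_{B^{\textbf{nerve}(I[1])}}$ respectively (matching the direction ``from $c_B \circ s_B$ to $1$'' in the statement).

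For the path-object claim there is essentially nothing to do. Writing $X := B^{\textbf{nerve}(I[1])}$, the codomain $\big(B^{\textbf{nerve}(I[1])}\big)^{\textbf{nerve}(I[1])}$ is literally $X^I$, equipped with precisely the maps $s_X, t_X, c_X$ of its definition. Provided $B^{\textbf{nerve}(I[1])}$ is a Kan complex -- which holds whenever $B$ is, since the cotensor of a fibrant object is fibrant in the simplicial model structure on $\textbf{sSet}$ -- Proposition \ref{prop:reedy:xi_path_obj} applies verbatim and shows $X^I$ is a very good path object for $X$.

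The real content is the second claim, which I would carry out purely in $\textbf{Cat}$ by restricting the functor $Q : I[1] \times I[1] \to I[1]$ along the two object-inclusions $\iota_0, \iota_1 : I[1] \to I[1] \times I[1]$ fixing the homotopy coordinate at $0$ and $1$. Since $D_B$ is, up to the exponential isomorphism, the map $B^{\textbf{nerve}(Q)}$, and the source map $s_X$ is $B^{\textbf{nerve}(\iota_0)}$, functoriality gives $s_X \circ D_B = B^{\textbf{nerve}(Q \circ \iota_0)}$ and likewise $t_X \circ D_B = B^{\textbf{nerve}(Q \circ \iota_1)}$. Now $Q \circ \iota_0$ sends $i \mapsto i \cdot 0 = 0$, the constant functor at $0$, which factors as $I[1] \xrightarrow{!} \ast \xrightarrow{0} I[1]$; applying $B^{\textbf{nerve}(-)}$ contravariantly converts this factorization into $c_B \circ s_B$, because $B^{\textbf{nerve}(0)} = s_B$ and $B^{\textbf{nerve}(!)} = c_B$. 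Meanwhile $Q \circ \iota_1$ sends $i \mapsto i \cdot 1 = i$, the identity functor, so $t_X \circ D_B = 1_{B^{\textbf{nerve}(I[1])}}$. Combining this with the previous paragraph establishes the proposition.

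The only place demanding care is the bookkeeping: one must fix a convention for which factor of $I[1] \times I[1]$ is the inner path direction and which is the outer homotopy direction, and then track the contravariance of $B^{\textbf{nerve}(-)}$ so that $\iota_0$ and $\iota_1$ are paired with the correct $s_X$ and $t_X$. Fortunately this obstacle is largely illusory, since $Q$ is symmetric in its two arguments: fixing either coordinate at $0$ yields the constant functor at $0$ and fixing either at $1$ yields the identity, so the computation is insensitive to the choice of convention.
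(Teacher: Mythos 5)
Your proof is correct and takes essentially the same approach as the paper's: both identify the composites of $D_B$ with the source and target projections as precomposition with $Q \circ (\mathrm{id} \times \{0\})$ and $Q \circ (\mathrm{id} \times \{1\})$, and compute these to be the constant functor at $0$ (hence $c_B \circ s_B$) and the identity, respectively. The only difference is that you also verify explicitly that $\big(B^{\textbf{nerve}(I[1])}\big)^{\textbf{nerve}(I[1])}$ is a very good path object via Proposition \ref{prop:reedy:xi_path_obj} (noting the Kan hypothesis on $B$), a point the paper leaves implicit.
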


\begin{proof}
    We consider the two projections
    $$
        B^{\textbf{nerve}(I[1])} \xrightarrow{D_B} \big( B^{\textbf{nerve}(I[1])} \big)^{\textbf{nerve}(I[1])} \rightarrow B^{\textbf{nerve}(I[1])}
    $$
    in turn, where the latter map is set either to $s_{B^{\textbf{nerve}(I[1])}}$ or $t_{B^{\textbf{nerve}(I[1])}}$. They are each given by precomposition with functors
    $$
        I[1] \rightarrow I[1] \times I[1] \xrightarrow{Q} I[1]
    $$
    where the first functor is either of the form $id \times \{0\}$ or $id \times \{1\}$. It is clear then that the first map sends $i \mapsto 0$, while the second sends $i \mapsto i$, as needed.
\end{proof}

We will make use of this homotopy repeatedly to prove that the constant maps $X \rightarrow R(X)$ given by constant paths are weak equivalences; in fact, we will construct a partial deformation retract thereof. Returning to the visual depiction of a $0$-simplex $(p, f, q)$ in $R(A)(1)$ for $A : \Delta^{op}_{\leq 1} \rightarrow \textbf{sSet}$ projective fibrant of the form
\[\begin{tikzcd}
	{x'} & x & y & {y'}
	\arrow["p", squiggly, from=1-1, to=1-2]
	\arrow["f", from=1-2, to=1-3]
	\arrow["q", squiggly, from=1-3, to=1-4]
\end{tikzcd}\]
we will produce a map $R(A)(1) \rightarrow A(1)$ sending $(p, f, q) \mapsto f$ and a homotopy built from $D_\bullet$ that deforms $R(A)(1) \rightarrow A(1) \rightarrow R(A)(1)$ to $1_{R(A)(1)}$, by gradually extending the paths $p$ and $q$ to their full lengths.

Another fact about $D_\bullet$ is also necessary:

\begin{proposition} \label{prop:reedy:db_precomp_const}
    The map
    $$
        B \xrightarrow{c_B} B^{\textbf{nerve}(I[1])} \xrightarrow{D_B} \Big( B^{\textbf{nerve}(I[1])} \Big)^{\textbf{nerve}(I[1])}
    $$
    is a constant homotopy from $c_B$ to $c_B$.
\end{proposition}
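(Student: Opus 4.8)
The plan is to reduce the statement to a trivial identity of functors in $\textbf{Cat}$ and then invoke the terminality of $\ast$. The key observation is that $c_B$, $D_B$, and the source and target maps are all of the form $B^{\textbf{nerve}(-)}$ applied to a functor, and that $B^{\textbf{nerve}(-)}$ is contravariantly functorial; hence any equation between such maps will follow from the corresponding equation between their underlying functors. Concretely, I would first record the functorial descriptions: writing $u : I[1] \to \ast$ for the unique functor to the terminal category, we have $c_B = B^{\textbf{nerve}(u)}$, while under the canonical isomorphism $(B^{\textbf{nerve}(I[1])})^{\textbf{nerve}(I[1])} \cong B^{\textbf{nerve}(I[1] \times I[1])}$ the map $D_B$ is by definition $B^{\textbf{nerve}(Q)}$.

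With these in hand, the composite $D_B \circ c_B$ is identified, via contravariant functoriality, with $B^{\textbf{nerve}(Q)} \circ B^{\textbf{nerve}(u)} = B^{\textbf{nerve}(u \circ Q)}$, where $u \circ Q : I[1] \times I[1] \to I[1] \to \ast$. On the other hand, the constant homotopy at $c_B$ is $c_{B^{\textbf{nerve}(I[1])}} \circ c_B$, and under the same exponential isomorphism the constant-path map $c_{B^{\textbf{nerve}(I[1])}}$ is induced by a projection $\pi : I[1] \times I[1] \to I[1]$ (the copy recording the inner path direction), so that this composite equals $B^{\textbf{nerve}(\pi)} \circ B^{\textbf{nerve}(u)} = B^{\textbf{nerve}(u \circ \pi)}$. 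Since both $u \circ Q$ and $u \circ \pi$ are functors $I[1] \times I[1] \to \ast$ and $\ast$ is terminal, they coincide; thus $D_B \circ c_B = c_{B^{\textbf{nerve}(I[1])}} \circ c_B$, which is exactly the assertion that $D_B \circ c_B$ is the constant homotopy at $c_B$. That its source and target are each $c_B$ is then automatic, since $s_{B^{\textbf{nerve}(I[1])}} \circ c_{B^{\textbf{nerve}(I[1])}} = t_{B^{\textbf{nerve}(I[1])}} \circ c_{B^{\textbf{nerve}(I[1])}} = 1_{B^{\textbf{nerve}(I[1])}}$; this also matches the preceding proposition together with $s_B \circ c_B = t_B \circ c_B = 1_B$.

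I expect the only genuine subtlety to be the bookkeeping of the isomorphism $(B^{\textbf{nerve}(I[1])})^{\textbf{nerve}(I[1])} \cong B^{\textbf{nerve}(I[1] \times I[1])}$ — in particular, verifying that $c_{B^{\textbf{nerve}(I[1])}}$ corresponds to a projection and correctly matching the \emph{inner} and \emph{outer} copies of $I[1]$. This obstacle is, however, largely defused by the fact that whichever projection arises is postcomposed with $u$ and so lands in the terminal object, whereupon uniqueness of functors into $\ast$ renders the precise identification irrelevant. An alternative, more hands-on route that sidesteps the bookkeeping entirely is to evaluate on generalized points: for $x$ an element of $B$, the homotopy $D_B(c_B(x))$ is the map $\textbf{nerve}(I[1] \times I[1]) \to B$ sending $(i,j) \mapsto c_B(x)(Q(i,j)) = x$, which is manifestly constant in every direction and hence the constant homotopy at the constant path on $x$.
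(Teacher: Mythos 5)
Your proof is correct and takes essentially the same route as the paper's: both unwind the definitions to see that $D_B \circ c_B$ is induced by the composite $I[1] \times I[1] \xrightarrow{Q} I[1] \to \ast$, i.e.\ by the terminal functor, whence terminality of $\ast$ forces agreement with the constant homotopy. Your write-up merely makes explicit the bookkeeping (the identification of $c_{B^{\textbf{nerve}(I[1])}}$ with a projection, and why the inner/outer ambiguity is harmless) that the paper compresses into ``unwinding the definitions.''
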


\begin{proof}
    Unwinding the definitions of $c_B$ and $D_B$ reveals this map is induced by the terminal map
    $$
        I[1] \times I[1] \rightarrow I[1] \rightarrow \ast.
    $$
\end{proof}
\section{Reedy Fibrant Replacement Functors}

Our goal in this section is to construct the Reedy fibrant replacement of a general levelwise fibrant functor
$$
    X : \mathscr{C} \rightarrow \textbf{sSet}
$$
for some Reedy category $\mathscr{C}$. That is to say, we seek a new functor $R(X) : \mathscr{C} \rightarrow \textbf{sSet}$ such that for all $c \in \mathscr{C}$, the map $R(X)(c) \rightarrow M_c R(X)$ is a fibration of simplicial sets, together with a levelwise weak equivalence $\kappa_X : X \rightarrow R(X)$.

In total, we will obtain a functor
$$
    R : (\textbf{sSet}^\mathscr{C})_{proj} \rightarrow (\textbf{sSet}^\mathscr{C})_{proj}
$$
where the domain and codomain are the full subcategories of \emph{projective}, namely \emph{levelwise}, fibrant functors, such that $R(X)$ is Reedy fibrant for all $X$. We will also find a natural transformation $\kappa : id \Rightarrow R$ such that for each $X$, $\kappa_X : X \to R(X)$ is levelwise a weak equivalence.

Moreover, setting $R^+$ to be the postcomposition with the functor $i^\ast$ defined by precomposition with $i : \mathscr{C}^+ \hookrightarrow \mathscr{C}$ of the form
\[\begin{tikzcd}
	{(\textbf{sSet}^{\mathscr{C}})_{proj}} && {(\textbf{sSet}^{\mathscr{C}})_{proj}} \\
	&& {(\textbf{sSet}^{\mathscr{C}^+})_{proj}}
	\arrow["R", from=1-1, to=1-3]
	\arrow["{R^+}"', from=1-1, to=2-3]
	\arrow["{i^\ast}", from=1-3, to=2-3]
\end{tikzcd}\]
and $\kappa^+ : i^\ast \Rightarrow R^+$ to be the whiskering
\[\begin{tikzcd}
	{(\textbf{sSet}^{\mathscr{C}})_{proj}} && {(\textbf{sSet}^{\mathscr{C}})_{proj}} & {(\textbf{sSet}^{\mathscr{C}^+})_{proj}}
	\arrow[""{name=0, anchor=center, inner sep=0}, "id", curve={height=-18pt}, from=1-1, to=1-3]
	\arrow[""{name=1, anchor=center, inner sep=0}, "R"', curve={height=18pt}, from=1-1, to=1-3]
	\arrow["{i^\ast}", from=1-3, to=1-4]
	\arrow["\kappa", shorten <=5pt, shorten >=5pt, Rightarrow, from=0, to=1]
\end{tikzcd}\]
we will then devise a natural transformation $\alpha : R^+ \Rightarrow i^\ast$ such that $\alpha \kappa^+ = 1_{i^\ast}$. For instance, if $\mathscr{C} = \Delta^{op}$, we have that the restriction of a simplicial space $X$ to $\mathscr{C}^+$ forgets the face maps and leaves us with only degeneracies, so that $R^+$, $\alpha$ and $\kappa^+$ would act on this restricted object.

While we will not necessarily have conversely that $\kappa^+ \alpha = 1_{R^+}$, we will construct a natural transformation
$$
    H : R^+ \Rightarrow (-)^{\textbf{nerve}(I[1])} \circ R^+
$$
which yields for each $X \in (\textbf{sSet}^{\mathscr{C}})_{proj}$ and each $c \in \mathscr{C}$ a right homotopy
$$
    (H_X)(c) : (\kappa^+_X)(c) \circ (\alpha_X)(c) \sim 1_{R^+(X)(c)}.
$$

The entirety of our construction largely hinges on Lemma \ref{lemma:reedy:sset_path_fib}. We will repeatedly apply this factorization in higher and higher degrees to force maps to matching objects to be fibrations. This alone does not explain how we will obtain maps from latching objects; to do so, we require $\alpha$ and $H$.

\begin{notation}
    Write $\mathscr{C}^{\leq n}$ for the full subcategory of a Reedy category $\mathscr{C}$ whose objects are those of degree less than or equal to $n$.
\end{notation}

In particular, note that $\mathscr{C}^{\leq 0}$ is a discrete category \cite[Ex. 15.1.23]{hirschhornModelCategoriesTheir2009}. We will henceforth write as a minor abuse of notation $i^\ast : (\textbf{sSet}^{\mathscr{C}^{\leq n}})_{proj} \rightarrow (\textbf{sSet}^{(\mathscr{C}^{\leq n})^+})_{proj}$ for all $n$.

\begin{notation}
    Suppose $X : \mathscr{C} \rightarrow \textbf{sSet}$ is functor from some category $\mathscr{C}$. Write $X^{\textbf{nerve}(I[1])} : \mathscr{C} \rightarrow \textbf{sSet}$ for the evident functor such that
    $$
        c \mapsto X(c)^{\textbf{nerve}(I[1])}
    $$
    with maps induced by postcomposition. 
\end{notation}

For the rest of this section, assume that $\mathscr{C}$ is some fixed Reedy category.

\begin{notation}
    Write $X^+$ for the restriction of $X : \mathscr{C} \rightarrow \textbf{sSet}$ along the inclusion $\mathscr{C}^+ \subseteq \mathscr{C}$. Similarly, write $\gamma^+ : X^+ \Rightarrow Y^+$ for the whiskering of a natural transformation $\gamma : X \Rightarrow Y$ between such functors with this inclusion.
\end{notation}

We first construct $R, \kappa, \alpha$ and $H$ for objects of degree $0$:

\begin{definition} \label{def:reedy:inductive0}
    Suppose $X : \mathscr{C}^{\leq 0} \rightarrow \textbf{sSet}$ is levelwise fibrant. Let $c \in \mathscr{C}^{\leq 0}$. Define:
    \begin{itemize}
        \item $R^{\leq 0}(X)(c) := X(c)$;

        \item $(\kappa^{\leq 0}_{X})(c) : X(c) \rightarrow R^{\leq 0}(X)(c)$ to be the identity;

        \item $(\alpha^{\leq 0}_{X})(c) : R^{\leq 0}(X)^+(c) \rightarrow X^+(c)$ to be the identity;

        \item $(H^{\leq 0}_X)(c) : (\kappa_X^{\leq 0})^+(c) \circ (\alpha_X^{\leq 0})(c) \sim 1_{R^{\leq 0}(X)^+(c)}$ to be the trivial homotopy.
    \end{itemize}
    These together define
    $$
        R^{\leq 0} := 1_{(\textbf{sSet}^{\mathscr{C}^{\leq 0}})_{proj}} : (\textbf{sSet}^{\mathscr{C}^{\leq 0}})_{proj} \rightarrow (\textbf{sSet}^{\mathscr{C}^{\leq 0}})_{proj}
    $$
    and $\kappa_X^{\leq 0} : id \Rightarrow R^{\leq 0}$, $\alpha_X^{\leq 0} : (R^{\leq 0})^+ \Rightarrow i^\ast$ and $H_X^{\leq 0} : (\kappa_X^{\leq 0})^+ \alpha_X^{\leq 0} \sim 1_{R^{\leq 0}(X)^+}$.
\end{definition}

It is clear that these are all valid functors and natural transformations, as $\mathscr{C}^{\leq 0}$ is discrete. Moreover, $\alpha^{\leq 0}_X (\kappa^{\leq 0}_X)^+ = id$ by definition.

We now proceed to the inductive case. Before all else, a few constructions must be established:

\begin{notation}\label{prop:reedy:match_kappa}
    Suppose $X : \mathscr{C}^{\leq n} \rightarrow \textbf{sSet}$ is levelwise fibrant, where $n > 0$. Suppose that, writing $X$ to also mean $X^{\leq n - 1} : \mathscr{C}^{\leq n - 1} \rightarrow \textbf{sSet}$, the functor and maps
    \begin{align*}
        R^{\leq n-1}(X) &: \mathscr{C}^{\leq n - 1} \rightarrow \textbf{sSet} \\
        \kappa^{\leq n-1}_X &: X^{\leq n - 1} \Rightarrow R^{\leq n - 1}(X) \\
        \alpha^{\leq n-1}_X &: R^{\leq n - 1}(X)^+ \Rightarrow (X^+)^{\leq n - 1} \\
        H^{\leq n-1}_X &: (\kappa^{\leq n - 1}_X)^+ \alpha_X^{\leq n - 1} \sim 1_{R^{\leq n - 1}(X)^+}
    \end{align*}
    are all defined to be natural in $X$ and such that $\alpha^{\leq n - 1}_X(\kappa_X^{\leq n - 1})^+ = 1_{(X^+)^{\leq n - 1}}$. Suppose $c \in \mathscr{C}$ such that $\textbf{deg}(c) = n$.

    Then write
    \begin{align*}
        M_c \kappa_X^{\leq n - 1} &: M_c X = M_c X^{\leq n - 1} \rightarrow M_c R^{\leq n - 1}(X) \\
        L_c \kappa_X^{\leq n - 1} &: L_c X = L_c X^{\leq n - 1} \rightarrow L_c R^{\leq n - 1}(X)
    \end{align*}
    for the natural maps defined via $\kappa^{\leq n - 1}_X$.
\end{notation}

Note that this notation is not just an application of the functors $M_c, L_c : \textbf{sSet}^{\mathscr{C}^{\leq n}} \rightarrow \textbf{sSet}$, as $R^{\leq n - 1}(X)$ and $\kappa_X^{\leq n - 1}$ are not defined on all of $\mathscr{C}^{\leq n}$. Indeed, we find that $M_c$ and $L_c$ can be instead defined on the domain category $\textbf{sSet}^{\mathscr{C}^{\leq n - 1}}$.

We will need to be more careful with inducing maps on latching objects via $\alpha$ and $H$, as these are moreover only defined on $(\mathscr{C}^+)^{\leq n - 1}$. We will write $\mathscr{L}_c$ instead of $L_c$ in these cases, to remind ourselves there is no map $R^{\leq n - 1}(X) \rightarrow X^{\leq n - 1}$ nor homotopy $R^{\leq n - 1}(X) \rightarrow R^{\leq n - 1}(X)^{\textbf{nerve}(I[1])}$ we are starting with.

\begin{definition}
    Suppose $\mathscr{C}$ is a Reedy category with $c \in \mathscr{C}$ such that $\textbf{deg}(c) = n$. Then define
    $$
        \mathscr{L}_c : \textbf{sSet}^{(\mathscr{C}^+)^{\leq n - 1}} \rightarrow \textbf{sSet}
    $$
    to send $X \mapsto \text{colim}_{\partial (\mathscr{C}^+ \downarrow c)} X$.
\end{definition}

This is indeed well-defined, as the categories $\partial (\mathscr{C}^+ \downarrow c)$ are entirely contained within $(\mathscr{C}^+)^{\leq n - 1}$.

\begin{proposition}
    Suppose $\mathscr{C}$ is a Reedy category with $c \in \mathscr{C}$ such that $\textbf{deg}(c) = n$. Then the diagram
    \[\begin{tikzcd}
    	{\textbf{sSet}^{\mathscr{C}^{\leq n}}} & {\textbf{sSet}} \\
    	{\textbf{sSet}^{(\mathscr{C}^{\leq n-1})^+}}
    	\arrow["{L_c}", from=1-1, to=1-2]
    	\arrow[from=1-1, to=2-1]
    	\arrow["{\mathscr{L}_c}"', from=2-1, to=1-2]
    \end{tikzcd}\]
    commutes.
\end{proposition}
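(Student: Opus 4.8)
The plan is to unwind both composites around the triangle and observe that they compute literally the same colimit. Recall that the latching object functor is $L_c X = \text{colim}_{\partial(\mathscr{C}^+ \downarrow c)} X$, where we implicitly precompose $X$ with the forgetful functor $U : \partial(\mathscr{C}^+ \downarrow c) \to \mathscr{C}^{\leq n}$ sending an object $(d \xrightarrow{f} c)$ to $d$. The lower-left composite sends $X$ first to its restriction along the inclusion $(\mathscr{C}^{\leq n-1})^+ \hookrightarrow \mathscr{C}^{\leq n}$, then to $\mathscr{L}_c$ of that restriction, which is again $\text{colim}_{\partial(\mathscr{C}^+ \downarrow c)}$ of the resulting diagram. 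So it suffices to show these two colimit diagrams coincide.

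First I would verify the degree bound that makes this work. Every non-identity morphism of $\mathscr{C}^+$ strictly raises degree, so for any object $(d \xrightarrow{f} c)$ of $\partial(\mathscr{C}^+ \downarrow c)$ --- where $f$ is by definition non-identity --- we have $\textbf{deg}(d) < \textbf{deg}(c) = n$, hence $\textbf{deg}(d) \leq n - 1$. Likewise every morphism of the latching category is a degree-raising map between such objects. Consequently the forgetful functor $U$ factors through the inclusion as $U = \iota \circ U'$ for some $U' : \partial(\mathscr{C}^+ \downarrow c) \to (\mathscr{C}^{\leq n-1})^+$. Here I am using the identification $(\mathscr{C}^{\leq n-1})^+ = (\mathscr{C}^+)^{\leq n-1}$, which holds because both are the category of degree-raising maps between objects of degree $\leq n-1$.

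Given this factorization, commutativity is immediate: the diagram $X \circ U$ over $\partial(\mathscr{C}^+ \downarrow c)$ whose colimit computes $L_c X$ equals $(X \circ \iota) \circ U' = (X|_{(\mathscr{C}^{\leq n-1})^+}) \circ U'$, which is exactly the diagram whose colimit computes $\mathscr{L}_c(X|_{(\mathscr{C}^{\leq n-1})^+})$. Since the two colimits are of the same diagram over the same indexing category, they agree on objects, and the same argument on morphisms of $\textbf{sSet}^{\mathscr{C}^{\leq n}}$ shows the two composite functors agree on morphisms; naturality is then automatic.

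The only thing requiring any care --- and the closest thing to an obstacle --- is the bookkeeping around which indexing category and forgetful functor are meant when one writes $L_c$ versus $\mathscr{L}_c$: one must confirm that the restriction loses no information used by the colimit, i.e.\ that $\partial(\mathscr{C}^+ \downarrow c)$ never touches objects of degree $n$ (in particular never $c$ itself, whose only map from itself in $\mathscr{C}^+$ is the excluded identity). This is precisely the standard fact that Reedy latching objects depend only on strictly-lower-degree data, already flagged in the remark following the definition of $\mathscr{L}_c$, so no genuine difficulty arises.
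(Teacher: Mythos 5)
Your proof is correct and takes the same route as the paper, whose entire proof is the single line ``This is by definition'': your careful unwinding --- that $\partial(\mathscr{C}^+\downarrow c)$ involves only objects of degree $\leq n-1$ and degree-raising maps between them, so both composites compute the colimit of literally the same diagram --- is exactly the content that assertion implicitly relies on. No gap; you have simply made the definitional check explicit.
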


\begin{proof}
    This is by definition.
\end{proof}

We immediately obtain the following:

\begin{proposition}\label{prop: H_X^n}
    Assume the conditions of Notation \ref{prop:reedy:match_kappa}. Then there is a natural map
    $$
        \mathscr{L}_c \alpha^{\leq n - 1}_X : L_c R^{\leq n - 1}(X) \rightarrow L_c X^{\leq n - 1} = L_c X
    $$
    induced by $\alpha_X^{\leq n - 1}$.
\end{proposition}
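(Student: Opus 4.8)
The plan is to obtain the map by simply applying the functor $\mathscr{L}_c$ to the natural transformation $\alpha^{\leq n-1}_X$, and then reading off its source and target via the commuting square of the preceding proposition. The key observation is that $\mathscr{L}_c : \textbf{sSet}^{(\mathscr{C}^+)^{\leq n-1}} \to \textbf{sSet}$ is an honest functor: it is computed as a colimit over the fixed category $\partial(\mathscr{C}^+ \downarrow c)$ (precomposed with the canonical projection to $(\mathscr{C}^+)^{\leq n-1}$, which is legitimate since $\partial(\mathscr{C}^+ \downarrow c)$ sits inside $(\mathscr{C}^+)^{\leq n-1}$), and colimits are functorial in their diagram. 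Hence $\mathscr{L}_c$ carries any natural transformation of presheaves on $(\mathscr{C}^+)^{\leq n-1}$ to a map of simplicial sets.

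First I would apply this to
$$
    \alpha^{\leq n-1}_X : R^{\leq n-1}(X)^+ \Rightarrow (X^+)^{\leq n-1},
$$
producing a map $\mathscr{L}_c(\alpha^{\leq n-1}_X) : \mathscr{L}_c\big(R^{\leq n-1}(X)^+\big) \to \mathscr{L}_c\big((X^+)^{\leq n-1}\big)$. Next I would identify the endpoints using the preceding proposition, which asserts precisely that $L_c$ factors as $\mathscr{L}_c$ applied to the restriction along $(-)^+$. This gives $\mathscr{L}_c\big(R^{\leq n-1}(X)^+\big) = L_c R^{\leq n-1}(X)$ and $\mathscr{L}_c\big((X^+)^{\leq n-1}\big) = L_c X^{\leq n-1}$, the latter being equal to $L_c X$ since, as noted following Notation \ref{prop:reedy:match_kappa}, the latching object depends only on data of degree strictly below $n$. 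This yields the claimed map.

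Finally, naturality in $X$ is inherited for free: $\alpha^{\leq n-1}$ is assumed natural in $X$, and applying the functor $\mathscr{L}_c$ to a natural family of transformations produces a natural family of maps. I do not anticipate any genuine obstacle here — this is why the construction is stated as something we ``immediately obtain.'' The only point requiring care is bookkeeping, namely checking that $\mathscr{L}_c$, the restriction $(-)^+$, and $L_c$ are each being applied on the correct subcategory ($(\mathscr{C}^+)^{\leq n-1}$ versus $\mathscr{C}^{\leq n-1}$ versus $\mathscr{C}^{\leq n}$), which is exactly the discrepancy that the commuting square of the preceding proposition is designed to resolve.
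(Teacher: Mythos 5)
Your proposal is correct and matches the paper's reasoning: the paper presents this proposition with no separate proof (``We immediately obtain the following''), precisely because, as you argue, $\mathscr{L}_c$ is a functor computed as a colimit over $\partial(\mathscr{C}^+ \downarrow c)$, so it carries the natural transformation $\alpha^{\leq n-1}_X$ to a map whose endpoints are identified with the latching objects via the commuting triangle of the preceding proposition. Your explicit bookkeeping of which subcategory each functor lives on is exactly the content the paper leaves implicit.
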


\begin{proposition}
    Assume the conditions of Notation \ref{prop:reedy:match_kappa}. Then there is a natural map
    $$
        \mathscr{L}_c H_X^{\leq n - 1} : L_c R^{\leq n - 1}(X) \rightarrow L_c (R^{\leq n - 1}(X)^{\textbf{nerve}(I[1])})
    $$
    induced by $H_X^{\leq n - 1}$.
\end{proposition}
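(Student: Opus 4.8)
The plan is to observe that $\mathscr{L}_c$, being the colimit over $\partial(\mathscr{C}^+ \downarrow c)$, is a functor $\textbf{sSet}^{(\mathscr{C}^+)^{\leq n-1}} \rightarrow \textbf{sSet}$ and therefore acts on natural transformations; the desired map is simply the image of $H_X^{\leq n-1}$ under this functor, in exactly the same way that the previous proposition obtains $\mathscr{L}_c \alpha_X^{\leq n-1}$ from $\alpha_X^{\leq n-1}$. First I would recall that $H_X^{\leq n-1}$, being a right homotopy, is concretely a natural transformation
$$
    H_X^{\leq n-1} : R^{\leq n-1}(X)^+ \Rightarrow \big( R^{\leq n-1}(X)^+ \big)^{\textbf{nerve}(I[1])}
$$
of functors $(\mathscr{C}^+)^{\leq n-1} \rightarrow \textbf{sSet}$. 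Since $\partial(\mathscr{C}^+ \downarrow c)$ is contained entirely within $(\mathscr{C}^+)^{\leq n-1}$, applying $\mathscr{L}_c = \mathrm{colim}_{\partial(\mathscr{C}^+ \downarrow c)}$ to this natural transformation yields a map
$$
    \mathscr{L}_c R^{\leq n-1}(X)^+ \rightarrow \mathscr{L}_c \big( R^{\leq n-1}(X)^+ \big)^{\textbf{nerve}(I[1])}.
$$

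Next I would identify the two sides with the claimed latching objects using the commuting triangle $L_c = \mathscr{L}_c \circ (-)^+$ established above. The source is immediately $L_c R^{\leq n-1}(X)$. For the target, the key observation is that forming the path-space functor $(-)^{\textbf{nerve}(I[1])}$ is computed pointwise and hence commutes with restriction along $\mathscr{C}^+ \hookrightarrow \mathscr{C}$; that is,
$$
    \big( R^{\leq n-1}(X)^{\textbf{nerve}(I[1])} \big)^+ = \big( R^{\leq n-1}(X)^+ \big)^{\textbf{nerve}(I[1])}.
$$
Applying $\mathscr{L}_c$ and invoking the commuting triangle once more then gives $\mathscr{L}_c \big( R^{\leq n-1}(X)^+ \big)^{\textbf{nerve}(I[1])} = L_c\big( R^{\leq n-1}(X)^{\textbf{nerve}(I[1])} \big)$, which is the required codomain.

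Finally, naturality of the resulting map in $X$ follows from the assumed naturality of $H_X^{\leq n-1}$ in $X$ together with the functoriality of $\mathscr{L}_c$. I do not anticipate a genuine obstacle: the statement is a soft consequence of colimits being functorial in their diagrams, and the only point requiring a moment's care is the pointwise compatibility of $(-)^{\textbf{nerve}(I[1])}$ with restriction used to identify the target, which is immediate since both operations are levelwise.
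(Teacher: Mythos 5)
Your proposal is correct and follows essentially the same route as the paper: the paper's proof simply observes that $H^{\leq n-1}$ is a natural transformation $(R^{\leq n-1})^+ \Rightarrow ((R^{\leq n-1})^+)^{\textbf{nerve}(I[1])}$ of functors on $(\mathscr{C}^+)^{\leq n-1}$ and hence induces a map on latching objects via $\mathscr{L}_c$. Your write-up merely makes explicit the identifications (the triangle $L_c = \mathscr{L}_c \circ (-)^+$ and the pointwise compatibility of $(-)^{\textbf{nerve}(I[1])}$ with restriction) that the paper leaves implicit.
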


\begin{proof}
    $H^{\leq n - 1}$ is defined as a natural transformation of functors 
    \[\begin{tikzcd}
    	{ (\textbf{sSet}^{(\mathscr{C}^{\leq n-1})^+})_{proj}} &&& {(\textbf{sSet}^{(\mathscr{C}^{\leq n-1})^+})_{proj}}
    	\arrow[""{name=0, anchor=center, inner sep=0}, "{((R^{\leq n - 1})^+)^{\textbf{nerve}(I[1])}}"', curve={height=18pt}, from=1-1, to=1-4]
    	\arrow[""{name=1, anchor=center, inner sep=0}, "{(R^{\leq n - 1})^+}", curve={height=-18pt}, from=1-1, to=1-4]
    	\arrow["{H^{\leq n - 1}}"', shorten <=5pt, shorten >=5pt, Rightarrow, from=1, to=0]
    \end{tikzcd}\]
    and so is sufficient to define a map on latching objects as stated.
\end{proof}

\begin{proposition}
    Assume the conditions of Notation \ref{prop:reedy:match_kappa}. Then
    \begin{align*}
        L_c \kappa^{\leq n - 1}_X \circ \mathscr{L}_c \alpha^{\leq n - 1}_X &= \mathscr{L}_c \big( (\kappa^{\leq n - 1}_X)^+ \alpha^{\leq n - 1}_X \big) \\
        \mathscr{L}_c \alpha^{\leq n - 1}_X \circ  L_c \kappa^{\leq n - 1}_X &= \mathscr{L}_c \big( \alpha^{\leq n - 1}_X (\kappa^{\leq n - 1}_X)^+ \big).
    \end{align*}
\end{proposition}

\begin{proof}
    We have that $L_c \kappa^{\leq n - 1}_X = \mathscr{L}_c (\kappa^{\leq n - 1}_X)^+$. The result then follows.
\end{proof}

We will need a particular homotopy induced by these maps. Note importantly that since $M_c (R^{\leq n - 1}(X)^{\textbf{nerve}(I[1])})$ is defined as a limit, we have a natural isomorphism
$$
    M_c (R^{\leq n - 1}(X)^{\textbf{nerve}(I[1])}) \cong M_c (R^{\leq n - 1}(X))^{\textbf{nerve}(I[1])}.
$$

\begin{notation}
    Assume the conditions of Notation \ref{prop:reedy:match_kappa}. Then write
    \begin{align*}
        L_c(s_{R^{\leq n-1}(X)}), L_c(t_{R^{\leq n-1}(X)}) : L_c(R^{\leq n-1}(X)^{\textbf{nerve}(I[1])}) \rightarrow L_c(R^{\leq n-1}(X)) \\
        M_c(s_{R^{\leq n-1}(X)}), M_c(t_{R^{\leq n-1}(X)}) : M_c(R^{\leq n-1}(X)^{\textbf{nerve}(I[1])}) \rightarrow M_c(R^{\leq n-1}(X))
    \end{align*}
    for the maps induced by $s_{R^{\leq n-1}(X)}$ and $t_{R^{\leq n-1}(X)}$, respectively.
\end{notation}

\begin{proposition} \label{prop:reedyfib:diag_latchmatch_comms}
    Assume the conditions of Notation \ref{prop:reedy:match_kappa}. Then, letting $g_0, g_1, g_2, g_3$ be latching-matching maps for appropriate functors, the diagram
    \[\hskip-1cm\begin{tikzcd}
    	& {L_c X} && {L_c R^{\leq n-1}(X)} && {L_c R^{\leq n-1}(X)} \\
    	{M_c X} & {L_c R^{\leq n-1}(X)} && {L_c(R^{\leq n-1}(X)^{\textbf{nerve}(I[1])})} && {L_c R^{\leq n-1}(X)} \\
    	& {M_c(R^{\leq n-1}(X))} && {M_c(R^{\leq n-1}(X)^{\textbf{nerve}(I[1])})} && {M_c(R^{\leq n-1}(X))} \\
    	& {M_c(R^{\leq n-1}(X))} && {(M_c R^{\leq n-1}(X))^{\textbf{nerve}(I[1])}} && {M_c(R^{\leq n-1}(X))}
    	\arrow["{g_0}"', from=1-2, to=2-1]
    	\arrow["{L_c \kappa^{\leq n - 1}_X}", from=1-2, to=2-2]
    	\arrow["{\mathscr{L} \alpha^{\leq n - 1}_X}"', from=1-4, to=1-2]
    	\arrow["id", from=1-4, to=1-6]
    	\arrow["{\mathscr{L}_c H_X^{\leq n-1}}"', from=1-4, to=2-4]
    	\arrow["id", from=1-6, to=2-6]
    	\arrow["{M_c \kappa^{\leq n - 1}_X}"', from=2-1, to=3-2]
    	\arrow["{g_1}", from=2-2, to=3-2]
    	\arrow["{L_c(s_{R^{\leq n-1}(X)})}"', from=2-4, to=2-2]
    	\arrow["{L_c(t_{R^{\leq n-1}(X)})}", from=2-4, to=2-6]
    	\arrow["{g_2}"', from=2-4, to=3-4]
    	\arrow["{g_3}", from=2-6, to=3-6]
    	\arrow["id"', from=3-2, to=4-2]
    	\arrow["{M_c(s_{R^{\leq n-1}(X)})}", from=3-4, to=3-2]
    	\arrow["{M_c(t_{R^{\leq n-1}(X)})}"', from=3-4, to=3-6]
    	\arrow["\cong"', from=3-4, to=4-4]
    	\arrow["id", from=3-6, to=4-6]
    	\arrow["{s_{M_c R^{\leq n-1}(X)}}", from=4-4, to=4-2]
    	\arrow["{t_{M_c R^{\leq n-1}(X)}}"', from=4-4, to=4-6]
    \end{tikzcd}\]
    commutes. Moreover, this diagram is natural in $X$.
\end{proposition}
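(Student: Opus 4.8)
The plan is to verify commutativity cell by cell, observing that the diagram decomposes into six quadrilaterals of three distinct flavours, each commuting for a different structural reason. First I would note that the strips are glued along the central column (column four), so it suffices to check: the top-left square (built from $g_0$, $g_1$ and $\kappa$); the two ``homotopy'' cells adjacent to $\mathscr{L}_c H_X^{\leq n-1}$; the two middle squares built from $L_c(s), L_c(t)$ and $M_c(s), M_c(t)$; and the two bottom squares involving the canonical isomorphism $M_c(R^{\leq n-1}(X)^{\textbf{nerve}(I[1])}) \cong (M_c R^{\leq n-1}(X))^{\textbf{nerve}(I[1])}$.

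The first family — the top-left square and the two middle squares — commutes purely by naturality of the latching-matching comparison map. Each $g_i$ is an instance of the natural transformation $L_c \Rightarrow M_c$, so the square obtained by applying $L_c$ and $M_c$ to any map of functors commutes automatically; applying this to $\kappa^{\leq n-1}_X$ yields the top-left square, and applying it to $s_{R^{\leq n-1}(X)}$ and $t_{R^{\leq n-1}(X)}$ yields the two middle squares. The two bottom squares are equally formal: the isomorphism $M_c(R^{\leq n-1}(X)^{\textbf{nerve}(I[1])}) \cong (M_c R^{\leq n-1}(X))^{\textbf{nerve}(I[1])}$ arises because $M_c$ and $(-)^{\textbf{nerve}(I[1])}$ are both limits and hence commute, and under this identification $M_c(s_{R^{\leq n-1}(X)})$ and $M_c(t_{R^{\leq n-1}(X)})$ are by construction carried to $s_{M_c R^{\leq n-1}(X)}$ and $t_{M_c R^{\leq n-1}(X)}$, which is exactly the claim that these squares commute.

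The genuine content lives in the two homotopy cells at the top, and this is where I expect the main obstacle to be. Here I would use that $H^{\leq n-1}_X$ is a right homotopy from $(\kappa^{\leq n-1}_X)^+ \alpha^{\leq n-1}_X$ to the identity, so that, after applying $\mathscr{L}_c$, postcomposition with source and target recovers the two endpoints: $L_c(s_{R^{\leq n-1}(X)}) \circ \mathscr{L}_c H_X^{\leq n-1} = \mathscr{L}_c\big((\kappa^{\leq n-1}_X)^+ \alpha^{\leq n-1}_X\big)$ and $L_c(t_{R^{\leq n-1}(X)}) \circ \mathscr{L}_c H_X^{\leq n-1} = \mathscr{L}_c(1) = id$. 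The right-hand homotopy cell then commutes at once, while for the left-hand cell I would invoke the earlier identity $L_c \kappa^{\leq n-1}_X \circ \mathscr{L}_c \alpha^{\leq n-1}_X = \mathscr{L}_c\big((\kappa^{\leq n-1}_X)^+ \alpha^{\leq n-1}_X\big)$ to rewrite $\mathscr{L}_c\big((\kappa^{\leq n-1}_X)^+ \alpha^{\leq n-1}_X\big)$ as the desired composite $L_c \kappa^{\leq n-1}_X \circ \mathscr{L}\alpha^{\leq n-1}_X$. The delicate points are keeping the endpoint conventions for $H$ consistent with the placement of $s$ and $t$ in the diagram, and checking that $\mathscr{L}_c$ genuinely transports the right-homotopy structure — that is, that the colimit $\mathscr{L}_c$ commutes with postcomposition by $s$ and $t$ — which calls for a careful unwinding of the definition of $\mathscr{L}_c H_X^{\leq n-1}$ as the latching-object map induced by the natural transformation $H^{\leq n-1}$.

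Finally, naturality in $X$ needs no separate argument beyond observing that every edge is assembled from data — $\kappa^{\leq n-1}$, $\alpha^{\leq n-1}$, $H^{\leq n-1}$, the comparison maps $g_i$, the source and target maps, and the canonical isomorphism — all natural in $X$ by the hypotheses of Notation \ref{prop:reedy:match_kappa} and by functoriality of $L_c$, $M_c$ and $(-)^{\textbf{nerve}(I[1])}$; naturality of a diagram assembled from natural pieces is then automatic.
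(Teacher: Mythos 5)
Your proposal is correct and takes essentially the same route as the paper: the paper's own proof is just the one-line remark that this ``is a matter of checking definitions,'' and your cell-by-cell verification --- naturality of the canonical latching-to-matching map, the limit description of the isomorphism $M_c(R^{\leq n-1}(X)^{\textbf{nerve}(I[1])}) \cong (M_c R^{\leq n-1}(X))^{\textbf{nerve}(I[1])}$, and the endpoint identities $s \circ H = (\kappa)^+ \alpha$, $t \circ H = \mathrm{id}$ transported along the colimit $\mathscr{L}_c$ --- is exactly the definitional unwinding the paper leaves implicit. In particular, your appeal to the paper's earlier identity $L_c \kappa^{\leq n-1}_X \circ \mathscr{L}_c \alpha^{\leq n-1}_X = \mathscr{L}_c\big((\kappa^{\leq n-1}_X)^+ \alpha^{\leq n-1}_X\big)$ to close the left homotopy cell is precisely the ingredient the surrounding propositions were set up to provide.
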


\begin{proof}
    This is a matter of checking definitions.
\end{proof}

We are now ready to commence the inductive step to constructing our Reedy fibrant replacement functor:

\begin{definition} \label{def:reedy:r_constr}
    Assume the conditions of Notation \ref{prop:reedy:match_kappa}. Then define $R^{\leq n}|_{\mathscr{C}^{\leq n-1}} := R^{\leq n - 1}$. This is thus sufficient to build $M_c R^{\leq n}(X)$ and a map $M_c X \rightarrow M_c R^{\leq n}(X)$ via $ M_c \kappa_X^{\leq n - 1}$. Hence, define $R^{\leq n}(X)(c)$ to be the pullback
    \[\begin{tikzcd}
    	{R^{\leq n}(X)(c)} && {X(c)} \\
    	&& {M_c X} \\
    	{(M_c R^{\leq n}(X))^{\textbf{nerve}(I[1])}} && {M_c R^{\leq n}(X)}
    	\arrow[from=1-1, to=1-3]
    	\arrow[from=1-1, to=3-1]
    	\arrow["\ulcorner"{anchor=center, pos=0.125}, draw=none, from=1-1, to=3-3]
    	\arrow[from=1-3, to=2-3]
    	\arrow["{M_c \kappa_X^{\leq n - 1}}", from=2-3, to=3-3]
    	\arrow["{s_{M_c R^{\leq n}(X)}}"', from=3-1, to=3-3]
    \end{tikzcd}\]
    Moreover, we have a natural map
    $$
        R^{\leq n}(X)(c) \rightarrow (M_c R^{\leq n}(X))^{\textbf{nerve}(I[1])} \xrightarrow{t_{M_c R^{\leq n}(X)}} M_c R^{\leq n}(X)
    $$
    where the first map is pullback projection. Note moreover that $L_c R^{\leq n}(X)$ is already defined. Then, consider the diagram
    \[\begin{tikzcd}[sep=small]
    	{L_c R^{\leq n}(X)} && {L_c X} \\
    	& {X(c) \times_{M_c R^{\leq n}(X)} (M_c R^{\leq n}(X))^{\textbf{nerve}(I[1])}} & {X(c)} \\
    	{L_c (R^{\leq n}(X)^{\textbf{nerve}(I[1])})} && {M_c X} \\
    	{M_c (R^{\leq n}(X)^{\textbf{nerve}(I[1])})} & {(M_c R^{\leq n}(X))^{\textbf{nerve}(I[1])}} & {M_c R^{\leq n}(X)}
    	\arrow["{\mathscr{L}_c \alpha^{\leq n -1}}", from=1-1, to=1-3]
    	\arrow[dashed, from=1-1, to=2-2]
    	\arrow["{\mathscr{L}_c H^{\leq n - 1}_X}"', from=1-1, to=3-1]
    	\arrow[from=1-3, to=2-3]
    	\arrow[from=2-2, to=2-3]
    	\arrow[from=2-2, to=4-2]
    	\arrow[from=2-3, to=3-3]
    	\arrow[from=3-1, to=4-1]
    	\arrow["{M_c \kappa^{\leq n - 1}_X}", from=3-3, to=4-3]
    	\arrow["\cong", from=4-1, to=4-2]
    	\arrow["{s_{M_c R^{\leq n}(X)}}", from=4-2, to=4-3]
    \end{tikzcd}\]
    This commutes by Proposition \ref{prop:reedyfib:diag_latchmatch_comms}, so yields a map from the latching object. As the map
    \begin{align*}
        L_c R^{\leq n}(X) \xrightarrow{\mathscr{L}_c H_X^{\leq n-1}} L_c(R^{\leq n}(X)^{\textbf{nerve}(I[1])})
        &\rightarrow M_c(R^{\leq n}(X)^{\textbf{nerve}(I[1])}) \\
        &\rightarrow (M_c R^{\leq n}(X))^{\textbf{nerve}(I[1])} \\
        &\xrightarrow{t_{M_c R^{\leq n}(X)}} M_c R^{\leq n}(X)
    \end{align*}
    is precisely the latching-matching map by Proposition \ref{prop:reedyfib:diag_latchmatch_comms}, we have that the composition
    $$
        L_c R^{\leq n}(X) \rightarrow R^{\leq n}(X)(c) \rightarrow M_c R^{\leq n}(X)
    $$
    is precisely the latching-matching map. This is thus sufficient to define $R^{\leq n}(X)$.

    A natural definition of $R^{\leq n}(f)$ for $f : X \rightarrow Y$ results in the functor
    $$
        R^{\leq n} : (\textbf{sSet}^{\mathscr{C}^{\leq n}})_{proj} \rightarrow (\textbf{sSet}^{\mathscr{C}^{\leq n}})_{proj}.
    $$
\end{definition}

The fact that $R^{\leq n}$ is functorial is due to Proposition \ref{prop:reedyfib:diag_latchmatch_comms}, in particular the fact that the diagram in question is natural in $X$. This allows us to construct morphisms $R^{\leq n}(f)$ by the unsual inductive procedure to obtain natural transformations between functors $\mathscr{C}^{\leq n} \rightarrow \textbf{sSet}$, discussed for instance in \cite[pg. 287]{hirschhornModelCategoriesTheir2009}.

Before we proceed further, note that we will not be able to prove our inductive definition of $\kappa_X^{\leq n}$ is a natural transformation until we have established the corresponding inductive definition of $H_X^{\leq n}$. However, we are able to define its levels without yet having naturality:

\begin{definition} \label{def:reedy:kappa_inductive}
    Assume the conditions of Notation \ref{prop:reedy:match_kappa}. Then define $(\kappa^{\leq n}_X)(c) : X(c) \rightarrow R^{\leq n}(X)(c)$ by the identity on $X(c)$ together with the map
    $$
        X(c) \rightarrow M_c X \xrightarrow{M_c \kappa_X^{\leq n - 1}} M_c R^{\leq n}(X) \xrightarrow{c_{M_c R^{\leq n}(X)}} (M_c R^{\leq n}(X))^{\textbf{nerve}(I[1])}.
    $$
    Moreover, define $(\kappa_X^{\leq n})(d) := (\kappa_X^{\leq n-1})(d)$ for $\textbf{deg}(d) \leq n - 1$.
\end{definition}

\begin{definition}
    Assume the conditions of Notation \ref{prop:reedy:match_kappa}. Then define $(\alpha^{\leq n}_X)(c) : R^{\leq n}(X)(c) \rightarrow X(c)$ by the pullback projection to $X(c)$. Moreover, define $(\alpha_X^{\leq n})(d) := (\alpha_X^{\leq n-1})(d)$ for $\textbf{deg}(d) \leq n - 1$.
\end{definition}

Some explanation is likely needed for why $\alpha_X$ is only defined on $\mathscr{C}^+$ rather than all of $\mathscr{C}$. We should not expect a retract $R(X) \rightarrow X$ of $\kappa_X$ for all of $X$, as such a map would imply that the matching maps $X(c) \rightarrow M_c X$ would be retracts of the maps $R(X)(c) \rightarrow M_c R(X)$. Indeed, we would have diagrams of the form
\[\begin{tikzcd}
	{X(c)} && {R(X)(c)} && {X(c)} \\
	{M_c X} && {M_c R(X)} && {M_c X}
	\arrow["{\kappa_X(c)}"', from=1-1, to=1-3]
	\arrow["id", curve={height=-18pt}, from=1-1, to=1-5]
	\arrow[from=1-1, to=2-1]
	\arrow["{\alpha_X(c)}"', from=1-3, to=1-5]
	\arrow[from=1-3, to=2-3]
	\arrow[from=1-5, to=2-5]
	\arrow["{M_c \kappa_X}", from=2-1, to=2-3]
	\arrow["id"', curve={height=18pt}, from=2-1, to=2-5]
	\arrow["{M_c \alpha_X}", from=2-3, to=2-5]
\end{tikzcd}\]
As fibrations are closed under retract, this would imply that $X$ was already Reedy fibrant, which is not generally the case. Thus, it must not be possible in general to define $M_c \alpha_X$ and therefore to define $\alpha_X$ on $\mathscr{C}^-$. Regardless, it is perfectly reasonable to define $\alpha_X$ on $\mathscr{C}^+$ as we will prove later, which is more than sufficient for our purposes.

We now turn to defining our homotopies. Note that $(R(X)^{\leq n}(c))^{\textbf{nerve}(I[1])}$ may alternatively be written as
$$
     X(c)^{\textbf{nerve}(I[1])} \times_{(M_c R^{\leq n}(X))^{\textbf{nerve}(I[1])}} \big((M_c R^{\leq n}(X))^{\textbf{nerve}(I[1])} \big)^{\textbf{nerve}(I[1])}.
$$

\begin{definition}
    Assume the conditions of Notation \ref{prop:reedy:match_kappa}. Then define $(H^{\leq n}_X)(c) : R^{\leq n}(X)(c) \rightarrow (R^{\leq n}(X)(c))^{\textbf{nerve}(I[1])}$ to be induced by the map of cospans
    \[\begin{tikzcd}
    	{(M_c R^{\leq n}(X))^{\textbf{nerve}(I[1])}} & {M_c R^{\leq n}(X)} & {X(c)} \\
    	{\big((M_c R^{\leq n}(X))^{\textbf{nerve}(I[1])}\big)^{\textbf{nerve}(I[1])}} & {(M_c R^{\leq n}(X))^{\textbf{nerve}(I[1])}} & {X(c)^{\textbf{nerve}(I[1])}}
    	\arrow[from=1-1, to=1-2]
    	\arrow["{D_{M_c R^{\leq n}(X)}}"', from=1-1, to=2-1]
    	\arrow["{c_{M_c R^{\leq n}(X)}}"', from=1-2, to=2-2]
    	\arrow[from=1-3, to=1-2]
    	\arrow["{c_{X(c)}}", from=1-3, to=2-3]
    	\arrow[from=2-1, to=2-2]
    	\arrow[from=2-3, to=2-2]
    \end{tikzcd}\] 
    Moreover, define $(H_X^{\leq n})(d) := (H_X^{\leq n-1})(d)$ for $\textbf{deg}(d) \leq n - 1$.
\end{definition}

We now must prove that $\kappa^{\leq n}, \alpha^{\leq n}$ and $H^{\leq n}$ are natural in $\mathscr{C}$ and in $(\textbf{sSet}^{\mathscr{C}})_{proj}$. Before we do so, a result is needed:

\begin{proposition} \label{prop:reedy:hkappa_const_htpy}
    Assume the conditions of Notation \ref{prop:reedy:match_kappa}. Then the map $(H_X^{\leq n})(c) \circ (\kappa_X^{\leq n})^+(c)$ is the constant homotopy on $(\kappa_X^{\leq n})^+(c)$.
\end{proposition}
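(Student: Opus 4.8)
The plan is to reduce the claim to a componentwise comparison of two maps into a pullback, with the single nontrivial ingredient being Proposition \ref{prop:reedy:db_precomp_const}. Write $M := M_c R^{\leq n}(X)$ and let $\phi : X(c) \to M$ denote the composite $X(c) \to M_c X \xrightarrow{M_c \kappa^{\leq n-1}_X} M$. First I would observe that $(\kappa^{\leq n}_X)^+(c)$ is simply $(\kappa^{\leq n}_X)(c)$, since $c$ is an object of $\mathscr{C}^+$, and recall that the target $(R^{\leq n}(X)(c))^{\textbf{nerve}(I[1])}$ is the pullback $X(c)^{\textbf{nerve}(I[1])} \times_{M^{\textbf{nerve}(I[1])}} (M^{\textbf{nerve}(I[1])})^{\textbf{nerve}(I[1])}$ obtained by applying the limit-preserving functor $(-)^{\textbf{nerve}(I[1])}$ to the pullback defining $R^{\leq n}(X)(c)$. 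A map into either pullback is determined by its two legs, so it suffices to compare the legs of $(H^{\leq n}_X)(c) \circ (\kappa^{\leq n}_X)(c)$ with those of the candidate constant homotopy $c_{R^{\leq n}(X)(c)} \circ (\kappa^{\leq n}_X)(c)$.

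Next I would read off the legs from the definitions. The map $(\kappa^{\leq n}_X)(c)$ has leg $1_{X(c)}$ into $X(c)$ and leg $c_M \circ \phi$ into $M^{\textbf{nerve}(I[1])}$, while $(H^{\leq n}_X)(c)$, being induced by the displayed map of cospans whose vertical maps are $c_{X(c)}$, $c_M$ and $D_M$, has leg $c_{X(c)}$ (after projection to $X(c)$) into $X(c)^{\textbf{nerve}(I[1])}$ and leg $D_M$ (after projection to $M^{\textbf{nerve}(I[1])}$) into $(M^{\textbf{nerve}(I[1])})^{\textbf{nerve}(I[1])}$. Composing, the first leg of $(H^{\leq n}_X)(c) \circ (\kappa^{\leq n}_X)(c)$ is $c_{X(c)}$ and the second is $D_M \circ c_M \circ \phi$.

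The crux is the second leg, where the homotopy $D_M$ is genuinely active; here I would invoke Proposition \ref{prop:reedy:db_precomp_const}, which gives $D_M \circ c_M = c_{M^{\textbf{nerve}(I[1])}} \circ c_M$, so that the second leg collapses to $c_{M^{\textbf{nerve}(I[1])}} \circ c_M \circ \phi$. On the other hand, because $(-)^{\textbf{nerve}(I[1])}$ preserves the defining pullback and the constant path map is natural, $c_{R^{\leq n}(X)(c)}$ is computed legwise as $c_{X(c)}$ and $c_{M^{\textbf{nerve}(I[1])}}$ composed with the respective projections; precomposing with $(\kappa^{\leq n}_X)(c)$ yields exactly the legs $c_{X(c)}$ and $c_{M^{\textbf{nerve}(I[1])}} \circ c_M \circ \phi$ just obtained. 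Since the two maps have equal legs into the pullback, they coincide, which is the claim. The only step requiring real content is the collapse of $D_M \circ c_M$ to a constant path; every other manipulation is formal bookkeeping of the pullback description.
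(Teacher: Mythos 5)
Your proposal is correct and follows essentially the same route as the paper's own proof: both reduce the claim to comparing the two legs of a map into the pullback $X(c)^{\textbf{nerve}(I[1])} \times_{(M_c R^{\leq n}(X))^{\textbf{nerve}(I[1])}} \big((M_c R^{\leq n}(X))^{\textbf{nerve}(I[1])}\big)^{\textbf{nerve}(I[1])}$, and both use Proposition \ref{prop:reedy:db_precomp_const} to collapse $D_{M_c R^{\leq n}(X)} \circ c_{M_c R^{\leq n}(X)}$ to a constant path. Your added step of explicitly computing the legs of the candidate constant homotopy $c_{R^{\leq n}(X)(c)} \circ (\kappa^{\leq n}_X)(c)$ legwise is a welcome bit of extra rigor that the paper leaves implicit, but it is the same argument.
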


\begin{proof}
    We have that this map is defined by the pullback of the two composite maps
    $$
        X(c) \xrightarrow{(\kappa^{\leq n}_X)^+(c)} X(c) \times_{M_c R^{\leq n}(X)} (M_c R^{\leq n}(X))^{\textbf{nerve}(I[1])} \rightarrow X(c) \xrightarrow{c_{X(c)}} X(c)^{\textbf{nerve}(I[1])}
    $$
    and
    \begin{align*}
        X(c) &\xrightarrow{(\kappa^{\leq n}_X)^+(c)} X(c) \times_{M_c R^{\leq n}(X)} (M_c R^{\leq n}(X))^{\textbf{nerve}(I[1])} \\
        &\rightarrow (M_c R^{\leq n}(X))^{\textbf{nerve}(I[1])} \\
        &\xrightarrow{D_{M_c R^{\leq n}(X)}} \big((M_c R^{\leq n}(X))^{\textbf{nerve}(I[1])}\big)^{\textbf{nerve}(I[1])}.
    \end{align*}
    The former of these maps is evidently just $c_{X(c)}$. The latter is then the map
    \begin{align*}
        X(c) \rightarrow M_c X \xrightarrow{M_c \kappa_X^{\leq n - 1}} M_c R^{\leq n}(X) &\xrightarrow{c_{M_c R^{\leq n}(X)}} (M_c R^{\leq n}(X))^{\textbf{nerve}(I[1])} \\
        &\xrightarrow{D_{M_c R^{\leq n}(X)}} \big((M_c R^{\leq n}(X))^{\textbf{nerve}(I[1])}\big)^{\textbf{nerve}(I[1])}        
    \end{align*}
    which by Proposition \ref{prop:reedy:db_precomp_const} is the map
    \begin{align*}
        X(c) \rightarrow M_c X \rightarrow M_c R^{\leq n}(X) &\xrightarrow{c_{M_c R^{\leq n}(X)}} (M_c R^{\leq n}(X))^{\textbf{nerve}(I[1])} \\
        &\xrightarrow{c_{(M_c R^{\leq n}(X))^{\textbf{nerve}(I[1])}}} \big((M_c R^{\leq n}(X))^{\textbf{nerve}(I[1])}\big)^{\textbf{nerve}(I[1])}.   
    \end{align*}
    Thus, the composite $(H_X^{\leq n})(c) \circ (\kappa_X^{\leq n})^+(c)$ is the constant homotopy on $(\kappa_X^{\leq n})^+(c)$, as needed.
\end{proof}

\begin{proposition}\label{prop: ext of kappa}
    Assume the conditions of Notation \ref{prop:reedy:match_kappa}. Then $\kappa^{\leq n}_X : X \rightarrow R^{\leq n}(X)$ is a valid morphism.
\end{proposition}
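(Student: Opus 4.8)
The plan is to show that $\kappa^{\leq n}_X$ is a genuine natural transformation of functors $\mathscr{C}^{\leq n} \to \textbf{sSet}$. Two things must be verified: first, that each component $(\kappa^{\leq n}_X)(c)$ with $\textbf{deg}(c) = n$ genuinely lands in the pullback $R^{\leq n}(X)(c)$; and second, that every naturality square commutes. For the first point, since $R^{\leq n}(X)(c)$ is the pullback of the cospan $X(c) \to M_c R^{\leq n}(X) \leftarrow (M_c R^{\leq n}(X))^{\textbf{nerve}(I[1])}$, it suffices to check that the two legs of $(\kappa^{\leq n}_X)(c)$ — the identity into $X(c)$ and the constant path $c_{M_c R^{\leq n}(X)} \circ M_c\kappa^{\leq n-1}_X \circ (\text{matching})$ into the path space — agree after composing down to $M_c R^{\leq n}(X)$. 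This is immediate from $s_{M_c R^{\leq n}(X)} \circ c_{M_c R^{\leq n}(X)} = 1$, i.e.\ the source of a constant path is its basepoint.

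For naturality I would invoke the Reedy factorization to reduce to morphisms in $\mathscr{C}^+$ and $\mathscr{C}^-$. Since $\kappa^{\leq n}_X$ restricts to $\kappa^{\leq n-1}_X$ below degree $n$, which is natural by the hypotheses of Notation \ref{prop:reedy:match_kappa}, only morphisms incident to a degree-$n$ object $c$ remain. For a degree-lowering map $c \to d$, the induced $R^{\leq n}(X)(c) \to R^{\leq n}(X)(d)$ factors through the matching map, which by Definition \ref{def:reedy:r_constr} reads off the target $t_{M_c R^{\leq n}(X)}$ of the path-space coordinate followed by the matching-object projection. Because $(\kappa^{\leq n}_X)(c)$ inserts only constant paths and $t \circ c = 1$, this square collapses to the naturality of $\kappa^{\leq n-1}_X$ together with the compatibility of $M_c\kappa^{\leq n-1}_X$ with the matching-object projections.

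The delicate case is a degree-raising map $\phi : d \to c$, where $R^{\leq n}(X)(d) \to R^{\leq n}(X)(c)$ factors through the latching map of Definition \ref{def:reedy:r_constr}, whose two pullback-legs are assembled from $\mathscr{L}_c\alpha^{\leq n-1}_X$ and $\mathscr{L}_c H^{\leq n-1}_X$. I would check the two projections of the resulting square separately. On the $X(c)$-leg, the composite $\mathscr{L}_c\alpha^{\leq n-1}_X$ cancels against the latching image of $\kappa^{\leq n-1}_X$, via $\alpha^{\leq n-1}_X(\kappa^{\leq n-1}_X)^+ = 1$ and the compatibility identity $\mathscr{L}_c\alpha^{\leq n-1}_X \circ L_c\kappa^{\leq n-1}_X = \mathscr{L}_c\big(\alpha^{\leq n-1}_X(\kappa^{\leq n-1}_X)^+\big)$, returning exactly the latching map of $X$, which matches the identity leg of $(\kappa^{\leq n}_X)(c) \circ X\phi$. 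On the path-space leg, $\mathscr{L}_c H^{\leq n-1}_X$ precomposed with the latching image of $\kappa^{\leq n-1}_X$ equals $\mathscr{L}_c\big(H^{\leq n-1}_X (\kappa^{\leq n-1}_X)^+\big)$, which is a constant path since, at every object of degree $< n$, $H_X \circ (\kappa_X)^+$ is the constant homotopy by the instances of Proposition \ref{prop:reedy:hkappa_const_htpy} established at earlier stages of the induction. This constant path agrees with the constant-path leg supplied by $(\kappa^{\leq n}_X)(c)$.

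The main obstacle is precisely this path-space leg for degree-raising maps: it is exactly where the reason for introducing $H$ manifests, since without the constancy of $H^{\leq n-1}_X(\kappa^{\leq n-1}_X)^+$ the latching-object coordinate of $R^{\leq n}(X)\phi \circ \kappa^{\leq n-1}_X(d)$ would be an arbitrary path rather than the constant path prescribed by $(\kappa^{\leq n}_X)(c)$, and naturality would fail. Everything else is bookkeeping with the universal property of the pullback and the two earlier $\mathscr{L}_c$-compatibility identities.
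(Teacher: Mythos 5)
Your proposal is correct and takes essentially the same route as the paper: naturality is reduced to compatibility with the matching map (immediate, since the target of a constant path is its basepoint) and with the latching map, whose two pullback legs are handled respectively by $\alpha^{\leq n-1}_X(\kappa^{\leq n-1}_X)^+ = 1_{(X^+)^{\leq n-1}}$ and by the levelwise constancy of $H^{\leq n-1}_X(\kappa^{\leq n-1}_X)^+$ via the inductively established instances of Proposition \ref{prop:reedy:hkappa_const_htpy}. The only cosmetic difference is that you organize the reduction by Reedy-factoring morphisms into $\mathscr{C}^-$ and $\mathscr{C}^+$ parts, whereas the paper invokes the equivalent standard criterion of checking a single latching--matching square.
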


\begin{proof}
    It suffices to show that the diagram
    \[\begin{tikzcd}
    	{L_c X^{\leq n}} & {X(c)} & {M_c X} \\
    	{L_c R^{\leq n}(X)} & {R^{\leq n}(X)(c)} & {M_c R^{\leq n}(X)}
    	\arrow[from=1-1, to=1-2]
    	\arrow["{L_c \kappa^{\leq n}_X}"', from=1-1, to=2-1]
    	\arrow[from=1-2, to=1-3]
    	\arrow["{(\kappa^{\leq n}_X)(c)}"', from=1-2, to=2-2]
    	\arrow["{M_c \kappa^{\leq n}_X}", from=1-3, to=2-3]
    	\arrow[from=2-1, to=2-2]
    	\arrow[from=2-2, to=2-3]
    \end{tikzcd}\]
    commutes. It is immediate that the right-hand square commutes. For the left-hand side, it suffices to show that the two diagrams
    \[\begin{tikzcd}
    	{L_c X} & {L_c R^{\leq n}(X)} & {L_c X} \\
    	{X(c)} && {X(c)}
    	\arrow["{L_c \kappa^{\leq n}_X}", from=1-1, to=1-2]
    	\arrow[from=1-1, to=2-1]
    	\arrow["{\mathscr{L}_c \alpha^{\leq n}_X}", from=1-2, to=1-3]
    	\arrow[from=1-3, to=2-3]
    	\arrow["id", from=2-1, to=2-3]
    \end{tikzcd}\]
    and
    \[\begin{tikzcd}
    	{L_c X} & {L_c R^{\leq n}(X)} & {L_c (R^{\leq n}(X)^{\textbf{nerve}(I[1])})} & {M_c (R^{\leq n}(X)^{\textbf{nerve}(I[1])})} \\
    	{X(c)} & {M_c X} & {M_c R^{\leq n}(X)} & {(M_c R^{\leq n}(X))^{\textbf{nerve}(I[1])}}
    	\arrow["{L_c \kappa_X^{\leq n}}", from=1-1, to=1-2]
    	\arrow[from=1-1, to=2-1]
    	\arrow["{\mathscr{L}_c H_X^{\leq n}}", from=1-2, to=1-3]
    	\arrow[from=1-3, to=1-4]
    	\arrow["\cong", from=1-4, to=2-4]
    	\arrow[from=2-1, to=2-2]
    	\arrow["{M_c \kappa_X^{\leq n}}"', from=2-2, to=2-3]
    	\arrow[from=2-3, to=2-4]
    \end{tikzcd}\]
    commute. The former commutes because $\alpha^{\leq n-1}_X (\kappa^{\leq n-1}_X)^+ = 1_{(X^+)^{\leq n-1}}$ by our assumptions, while the latter commutes because
    $$
        H_X^{\leq n-1} (\kappa^{\leq n-1}_X)^+ : (\kappa^{\leq n-1}_X)^+ \sim (\kappa^{\leq n-1}_X)^+
    $$
    is levelwise the trivial homotopy by Proposition \ref{prop:reedy:hkappa_const_htpy}.
\end{proof}

\begin{proposition}
    Assume the conditions of Notation \ref{prop:reedy:match_kappa}. Then $\alpha_X^{\leq n} : R^{\leq n}(X)^+ \rightarrow (X^{\leq n})^+$ is a valid morphism.
\end{proposition}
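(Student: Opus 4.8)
The plan is to verify that for a fixed $X$ the family $\{(\alpha^{\leq n}_X)(c)\}$ satisfies the naturality squares for every morphism of $(\mathscr{C}^{\leq n})^+$. Since every functor and map in sight restricts on $\mathscr{C}^{\leq n-1}$ to its degree-$(n-1)$ counterpart, and the inductive hypotheses of Notation \ref{prop:reedy:match_kappa} already provide naturality of $\alpha^{\leq n-1}_X$ there, the only squares needing attention are those indexed by a non-identity morphism $f : d \to c$ of $\mathscr{C}^+$ with $\textbf{deg}(c) = n$. As a non-identity morphism of $\mathscr{C}^+$ strictly raises degree, such an $f$ has $\textbf{deg}(d) \leq n-1$, whence $(\alpha^{\leq n}_X)(d) = (\alpha^{\leq n-1}_X)(d)$; and there are no non-identity $\mathscr{C}^+$-morphisms out of a degree-$n$ object inside $\mathscr{C}^{\leq n}$, so no further cases occur.

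For such an $f$ I would first recall how $R^{\leq n}(X)(f)$ is built. By the standard inductive extension to degree $n$ used in Definition \ref{def:reedy:r_constr}, any $\mathscr{C}^+$-morphism into the degree-$n$ object $c$ factors through the latching object as
$$
R^{\leq n}(X)(d) \xrightarrow{\iota_f} L_c R^{\leq n}(X) \xrightarrow{\ell_c} R^{\leq n}(X)(c),
$$
where $\iota_f$ is the coprojection attached to $f$ and $\ell_c$ is the latching map produced in Definition \ref{def:reedy:r_constr}. Since $(\alpha^{\leq n}_X)(c)$ is the pullback projection onto $X(c)$, the crucial observation is that, by the construction of $\ell_c$ in the commuting diagram of Definition \ref{def:reedy:r_constr}, the composite $(\alpha^{\leq n}_X)(c) \circ \ell_c$ equals the top path
$$
L_c R^{\leq n}(X) \xrightarrow{\mathscr{L}_c \alpha^{\leq n-1}_X} L_c X \to X(c),
$$
whose second arrow is the latching map of $X$.

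It then remains to chase the square. Composing along the top gives
$$
(\alpha^{\leq n}_X)(c) \circ R^{\leq n}(X)(f) = \big[L_c X \to X(c)\big] \circ \mathscr{L}_c \alpha^{\leq n-1}_X \circ \iota_f.
$$
By naturality of the colimit-induced map $\mathscr{L}_c \alpha^{\leq n-1}_X$ with respect to the coprojections, $\mathscr{L}_c \alpha^{\leq n-1}_X \circ \iota_f = \iota_f^X \circ (\alpha^{\leq n-1}_X)(d)$, where $\iota_f^X : X(d) \to L_c X$ is the coprojection for $f$ into the latching object of $X$. Finally, the defining property of the latching map gives $\big[L_c X \to X(c)\big] \circ \iota_f^X = X(f)$, so the whole composite equals $X(f) \circ (\alpha^{\leq n-1}_X)(d) = X(f) \circ (\alpha^{\leq n}_X)(d)$, which is exactly the other side of the square.

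The main obstacle I anticipate is bookkeeping rather than conceptual: one must pin down precisely that $R^{\leq n}(X)(f)$ factors through $\ell_c$ (i.e.\ that the inductive extension routes $\mathscr{C}^+$-morphisms into $c$ through the latching object) and that the projection of $\ell_c$ onto the $X(c)$-factor of the pullback is governed by $\mathscr{L}_c \alpha^{\leq n-1}_X$ exactly as drawn in Definition \ref{def:reedy:r_constr}. Once these identifications are fixed, the square collapses immediately by naturality of the latching colimit, with no homotopy-theoretic input — in particular, and unlike the companion statement for $\kappa$, no appeal to $H$ or to Proposition \ref{prop:reedy:hkappa_const_htpy} is required here.
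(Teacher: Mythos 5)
Your proof is correct and takes essentially the same route as the paper: the paper notes that since $(\mathscr{C}^+)^{\leq n}$ has trivial matching objects, only the latching-object square needs to commute, and that square commutes trivially because the latching map of $R^{\leq n}(X)$ was constructed in Definition \ref{def:reedy:r_constr} with $X(c)$-component equal to $\mathscr{L}_c \alpha^{\leq n-1}_X$ followed by the latching map of $X$. Your argument is precisely this observation, unpacked into explicit naturality squares via the coprojections into the latching colimit.
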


\begin{proof}
    Since $(\mathscr{C}^+)^{\leq n}$ exhibits trivial matching objects, defining such a functor requires only a commutative diagram
    \[\begin{tikzcd}
    	{L_c R(X)^{\leq n}} && {R(X)^{\leq n}(c)} \\
    	{L_c X^{\leq n}} && {X^{\leq n}(c)}
    	\arrow[from=1-1, to=1-3]
    	\arrow["{\mathscr{L}_c \alpha^{\leq n}_X}"', from=1-1, to=2-1]
    	\arrow["{\alpha^{\leq n}_X(c)}", from=1-3, to=2-3]
    	\arrow[from=2-1, to=2-3]
    \end{tikzcd}\]
    Note however that by the definition of the uppermost horizontal map, this commutativity is trivial.
\end{proof}

To prove naturality of $H_X^{\leq n}$, we will need an intermediate result:

\begin{proposition} \label{prop:reedy:hd=hh}
    Assume the conditions of Notation \ref{prop:reedy:match_kappa}. Then
    $$
        (H^{\leq n}_X)(c)^{\textbf{nerve}(I[1])} \circ (H^{\leq n}_X)(c) = D_{R^{\leq n}(X)(c)} \circ (H^{\leq n}_X)(c).
    $$
\end{proposition}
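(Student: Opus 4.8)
The plan is to reduce the asserted equality of two maps $R^{\leq n}(X)(c) \to \big((R^{\leq n}(X)(c))^{\textbf{nerve}(I[1])}\big)^{\textbf{nerve}(I[1])}$ to one identity on each leg of a pullback. Write $A := X(c)$ and $B := M_c R^{\leq n}(X)$, so that the defining pullback reads $R^{\leq n}(X)(c) = A \times_B B^{\textbf{nerve}(I[1])}$. As $(-)^{\textbf{nerve}(I[1])}$ is a cotensor it preserves limits, so applying it twice exhibits $\big((R^{\leq n}(X)(c))^{\textbf{nerve}(I[1])}\big)^{\textbf{nerve}(I[1])}$ as a pullback with projection legs $(A^{\textbf{nerve}(I[1])})^{\textbf{nerve}(I[1])}$ and $((B^{\textbf{nerve}(I[1])})^{\textbf{nerve}(I[1])})^{\textbf{nerve}(I[1])}$. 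Since both composites in the statement are maps into this pullback, it suffices to check that they agree after projecting onto each leg.

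To compute the projections I would use the description of $(H^{\leq n}_X)(c)$ as a map of cospans, namely $c_A$ on the $A$-component and $D_B$ on the $B^{\textbf{nerve}(I[1])}$-component. Combining this with the naturality of the constant-path maps, the naturality of $D$ regarded as a transformation $(-)^{\textbf{nerve}(I[1])} \Rightarrow ((-)^{\textbf{nerve}(I[1])})^{\textbf{nerve}(I[1])}$, and the fact that all of these commute with the pullback projections, the $A$-leg reduces to $(c_A)^{\textbf{nerve}(I[1])} \circ c_A = D_A \circ c_A$ and the $B^{\textbf{nerve}(I[1])}$-leg to $(D_B)^{\textbf{nerve}(I[1])} \circ D_B = D_{B^{\textbf{nerve}(I[1])}} \circ D_B$.

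The first identity holds because both sides are induced by the terminal map out of $\textbf{nerve}(I[1] \times I[1])$: the right-hand side is exactly the constant homotopy of Proposition \ref{prop:reedy:db_precomp_const}, while the left-hand side is induced by $\textbf{nerve}(I[1]) \times \textbf{nerve}(I[1]) \to \textbf{nerve}(I[1]) \to \ast$, and maps to the terminal object are unique. For the second, recall that $D_B$ is $B^{\textbf{nerve}(Q)}$ up to the canonical iso and that $\textbf{nerve}$ preserves products, whence $((B^{\textbf{nerve}(I[1])})^{\textbf{nerve}(I[1])})^{\textbf{nerve}(I[1])} \cong B^{\textbf{nerve}(I[1]^{\times 3})}$; under this identification the two composites are induced by $\textbf{nerve}(Q \circ (Q \times \mathrm{id}))$ and $\textbf{nerve}(Q \circ (\mathrm{id} \times Q))$, which agree because $Q(i,j) = i \times j$ is associative.

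I expect the only genuine difficulty to be bookkeeping: keeping track of which of the three copies of $\textbf{nerve}(I[1])$ indexes which exponent of the iterated path spaces, so that both reductions present as the two bracketings $Q \circ (Q \times \mathrm{id})$ and $Q \circ (\mathrm{id} \times Q)$ of a single associative operation rather than as maps differing by a permutation of factors. Pinning down the source/outer conventions in the iso $(B^{\textbf{nerve}(I[1])})^{\textbf{nerve}(I[1])} \cong B^{\textbf{nerve}(I[1] \times I[1])}$ is what makes this precise, with the commutativity of $Q$ available as slack should a transposition of factors intervene.
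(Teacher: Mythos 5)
Your proposal is correct and takes essentially the same route as the paper: the paper likewise unwinds both composites into maps of cospans and observes that on every component they are induced by the same maps in $\textbf{Cat}$, namely the terminal map $I[1] \times I[1] \rightarrow \ast$ on the constant-path components (via Proposition \ref{prop:reedy:db_precomp_const}) and $(i,j,k) \mapsto ijk$ on the $D$-components. Your reduction to the two pullback legs and the explicit associativity-of-$Q$ bookkeeping are exactly what the paper's componentwise check amounts to.
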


\begin{proof}
    Unwinding definitions, we have that $(H^{\leq n}_X)(c)^{\textbf{nerve}(I[1])} \circ (H^{\leq n}_X)(c)$ is given by the map of cospans
    \[\begin{tikzcd}
    	{(M_c R^{\leq n}(X))^{\textbf{nerve}(I[1])}} & {M_c R^{\leq n}(X)} & {X(c)} \\
    	{(M_c R^{\leq n}(X))^{\textbf{nerve}(I[1] \times I[1])}} & {(M_c R^{\leq n}(X))^{\textbf{nerve}(I[1])}} & {X(c)^{\textbf{nerve}(I[1])}} \\
    	{(M_c R^{\leq n}(X))^{\textbf{nerve}(I[1]\times I[1] \times I[1])}} & {(M_c R^{\leq n}(X))^{\textbf{nerve}(I[1] \times I[1])}} & {X(c)^{\textbf{nerve}(I[1] \times I[1])}}
    	\arrow[from=1-1, to=1-2]
    	\arrow["{D_{M_c R^{\leq n}(X)}}"', from=1-1, to=2-1]
    	\arrow["{c_{M_c R^{\leq n}(X)}}"', from=1-2, to=2-2]
    	\arrow[from=1-3, to=1-2]
    	\arrow["{c_{X(c)}}", from=1-3, to=2-3]
    	\arrow[from=2-1, to=2-2]
    	\arrow["{D_{(M_c R^{\leq n}(X))}^{\textbf{nerve}(I[1])}}"', from=2-1, to=3-1]
    	\arrow["{c_{(M_c R^{\leq n}(X))}^{\textbf{nerve}(I[1])}}"', from=2-2, to=3-2]
    	\arrow[from=2-3, to=2-2]
    	\arrow["{c_{X(c)}^{\textbf{nerve}(I[1])}}", from=2-3, to=3-3]
    	\arrow[from=3-1, to=3-2]
    	\arrow[from=3-3, to=3-2]
    \end{tikzcd}\]
    so that the three vertical composites are given by the map $I[1]^3 \rightarrow I[1]$ sending $(i, j, k) \mapsto ijk$ and the map $I[1]^2 \rightarrow \ast$. We then see that the map $D_{R^{\leq n}(X)(c)} \circ (H^{\leq n}_X)(c)$ is given in turn by the composite
    \[\begin{tikzcd}
    	{(M_c R^{\leq n}(X))^{\textbf{nerve}(I[1])}} & {M_c R^{\leq n}(X)} & {X(c)} \\
    	{(M_c R^{\leq n}(X))^{\textbf{nerve}(I[1] \times I[1])}} & {(M_c R^{\leq n}(X))^{\textbf{nerve}(I[1])}} & {X(c)^{\textbf{nerve}(I[1])}} \\
    	{(M_c R^{\leq n}(X))^{\textbf{nerve}(I[1]\times I[1] \times I[1])}} & {(M_c R^{\leq n}(X))^{\textbf{nerve}(I[1] \times I[1])}} & {X(c)^{\textbf{nerve}(I[1] \times I[1])}}
    	\arrow[from=1-1, to=1-2]
    	\arrow["{D_{M_c R^{\leq n}(X)}}"', from=1-1, to=2-1]
    	\arrow["{c_{M_c R^{\leq n}(X)}}"', from=1-2, to=2-2]
    	\arrow[from=1-3, to=1-2]
    	\arrow["{c_{X(c)}}", from=1-3, to=2-3]
    	\arrow[from=2-1, to=2-2]
    	\arrow["{D_{(M_c R^{\leq n}(X))^{\textbf{nerve}(I[1])}}}"', from=2-1, to=3-1]
    	\arrow["{D_{M_c R^{\leq n}(X)}}"', from=2-2, to=3-2]
    	\arrow[from=2-3, to=2-2]
    	\arrow["{D_{X(c)}}", from=2-3, to=3-3]
    	\arrow[from=3-1, to=3-2]
    	\arrow[from=3-3, to=3-2]
    \end{tikzcd}\]
    which is given by precisely the same maps, as needed.
\end{proof}

\begin{proposition}
    Assume the conditions of Notation \ref{prop:reedy:match_kappa}. Then $H^{\leq n}_X : R^{\leq n}(X)^+ \rightarrow (R^{\leq n}(X)^+)^{\textbf{nerve}(I[1])}$ is a valid morphism.
\end{proposition}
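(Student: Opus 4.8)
The plan is to argue exactly as in the proofs that $\alpha^{\leq n}_X$ and $\kappa^{\leq n}_X$ are valid morphisms. Since $H^{\leq n}_X$ agrees with $H^{\leq n-1}_X$ on objects of degree $\leq n-1$, naturality with respect to maps between such objects is the inductive hypothesis. Because $(\mathscr{C}^+)^{\leq n}$ exhibits trivial matching objects, the inductive construction of natural transformations over a direct category \cite[pg. 287]{hirschhornModelCategoriesTheir2009} reduces the claim to checking, for each $c$ with $\textbf{deg}(c) = n$, that the square

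\[\begin{tikzcd}
	{L_c R^{\leq n}(X)} & {R^{\leq n}(X)(c)} \\
	{L_c(R^{\leq n}(X)^{\textbf{nerve}(I[1])})} & {(R^{\leq n}(X)(c))^{\textbf{nerve}(I[1])}}
	\arrow[from=1-1, to=1-2]
	\arrow["{\mathscr{L}_c H_X^{\leq n-1}}"', from=1-1, to=2-1]
	\arrow["{(H_X^{\leq n})(c)}", from=1-2, to=2-2]
	\arrow[from=2-1, to=2-2]
\end{tikzcd}\]
commutes, where the horizontal maps are the respective latching maps of $R^{\leq n}(X)^+$ and $(R^{\leq n}(X)^+)^{\textbf{nerve}(I[1])}$, and the left vertical map $\mathscr{L}_c H_X^{\leq n-1}$ coincides with $L_c H_X^{\leq n}$ since the latching object only sees objects of degree $\leq n-1$.

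To verify commutativity, I would use the pullback description of $(R^{\leq n}(X)(c))^{\textbf{nerve}(I[1])}$ recorded immediately before the definition of $H^{\leq n}_X$, namely as $X(c)^{\textbf{nerve}(I[1])} \times_{(M_c R^{\leq n}(X))^{\textbf{nerve}(I[1])}} \big((M_c R^{\leq n}(X))^{\textbf{nerve}(I[1])}\big)^{\textbf{nerve}(I[1])}$, and check that the two composites agree after projecting onto each factor. The projection onto $X(c)^{\textbf{nerve}(I[1])}$ reduces, by the cospan definition of $(H^{\leq n}_X)(c)$, to the constant-path map $c_{X(c)}$ precomposed with the $X(c)$-component of the latching map; this matches on both sides using the naturality of $\kappa^{\leq n-1}_X$ together with the commuting latching--matching diagram of Proposition \ref{prop:reedyfib:diag_latchmatch_comms}, exactly as in the argument of Proposition \ref{prop: ext of kappa}. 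This projection is routine.

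The projection onto the iterated path space $\big((M_c R^{\leq n}(X))^{\textbf{nerve}(I[1])}\big)^{\textbf{nerve}(I[1])}$ is where the real work lies and is the main obstacle. On the right-hand composite this projection is governed by $D_{M_c R^{\leq n}(X)}$, which is built into the definition of $(H^{\leq n}_X)(c)$; on the left-hand composite, however, the iterated path space arises from applying $(-)^{\textbf{nerve}(I[1])}$ to the latching map of $R^{\leq n}(X)$, whose structure was itself assembled from the homotopies $H^{\leq n-1}_X$. Reconciling these two descriptions is precisely the content of Proposition \ref{prop:reedy:hd=hh}, which gives the identity $(H^{\leq n}_X)(c)^{\textbf{nerve}(I[1])} \circ (H^{\leq n}_X)(c) = D_{R^{\leq n}(X)(c)} \circ (H^{\leq n}_X)(c)$ equating the doubled homotopy with the one induced by $D_\bullet$. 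Combined once more with the coherence recorded in Proposition \ref{prop:reedyfib:diag_latchmatch_comms} and the naturality of $D_\bullet$, this identifies the two composites on the second factor. Having matched both projections, the universal property of the pullback forces the square to commute, and the inductive construction over $(\mathscr{C}^+)^{\leq n}$ then yields that $H^{\leq n}_X$ is a valid morphism.
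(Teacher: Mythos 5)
Your proposal follows the paper's proof essentially verbatim: reduce to commutativity of the latching square over the direct category $(\mathscr{C}^+)^{\leq n}$ (matching objects being trivial), split $(R^{\leq n}(X)(c))^{\textbf{nerve}(I[1])}$ into its two pullback factors, dispatch the projection to $X(c)^{\textbf{nerve}(I[1])}$ by unwinding the inductive definitions, and invoke Proposition \ref{prop:reedy:hd=hh} to handle the iterated path-space factor, exactly as the paper does. The only slip is cosmetic: the first projection is governed by $\alpha^{\leq n}_X$ rather than by naturality of $\kappa^{\leq n-1}_X$, since the $X(c)$-component of $(H^{\leq n}_X)(c)$ is $c_{X(c)} \circ \alpha^{\leq n}_X(c)$ and the paper's corresponding diagram is checked against $\mathscr{L}_c((\alpha^{\leq n}_X)^{\textbf{nerve}(I[1])})$, but this does not change the substance of the argument.
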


\begin{proof}
    Again, it suffices to show that the diagram
    \[\begin{tikzcd}
    	{L_c R^{\leq n}(X)} && {R^{\leq n}(X)(c)} \\
    	{L_c(R^{\leq n}(X)^{\textbf{nerve}(I[1])})} && {R^{\leq n}(X)(c)^{\textbf{nerve}(I[1])}}
    	\arrow[from=1-1, to=1-3]
    	\arrow["{\mathscr{L}_c H^{\leq n}_X}"', from=1-1, to=2-1]
    	\arrow["{H^{\leq n}_X(c)}", from=1-3, to=2-3]
    	\arrow[from=2-1, to=2-3]
    \end{tikzcd}\]
    commutes.

    We subdivide this question by splitting up $(R^{\leq n}(X)(c))^{\textbf{nerve}(I[1])}$ into $X(c)^{\textbf{nerve}(I[1])}$ and $\big((M_c R^{\leq n}(X))^{\textbf{nerve}(I[1])} \big)^{\textbf{nerve}(I[1])}$. It suffices then to check that the maps into these two objects induce commutative diagrams.
    
    One may then first check that the diagram
    \[\begin{tikzcd}
    	{L_c R^{\leq n}(X)} && {R^{\leq n}(X)(c)} \\
    	{L_c(R^{\leq n}(X)^{\textbf{nerve}(I[1])})} & {R^{\leq n}(X)(c)^{\textbf{nerve}(I[1])}} & {X(c)} \\
    	{L_c(X^{\textbf{nerve}(I[1])})} && {X(c)^{\textbf{nerve}(I[1])}}
    	\arrow[from=1-1, to=1-3]
    	\arrow["{\mathscr{L}_c H^{\leq n}_X}"', from=1-1, to=2-1]
    	\arrow["{\alpha^{\leq n}_X(c)}", from=1-3, to=2-3]
    	\arrow[from=2-1, to=2-2]
    	\arrow["{\mathscr{L}_c((\alpha^{\leq n}_X)^{\textbf{nerve}(I[1])})}"', from=2-1, to=3-1]
    	\arrow["{\alpha^{\leq n}_X(c)^{\textbf{nerve}(I[1])}}"{description}, from=2-2, to=3-3]
    	\arrow["{c_{X(c)}}", from=2-3, to=3-3]
    	\arrow[from=3-1, to=3-3]
    \end{tikzcd}\]
    commutes, by unwinding the definition of $H^{\leq n}_X$ inductively and checking first the bottom-left square and then the outermost boundary of the diagram.

    It then suffices to prove that the diagram
    \[\begin{tikzcd}[sep=small]
    	{L_c R^{\leq n}(X)} && {R^{\leq n}(X)(c)} \\
    	{L_c(R^{\leq n}(X)^{\textbf{nerve}(I[1])})} & {L_c(R^{\leq n}(X)^{\textbf{nerve}(I[1])})} & {(M_c R^{\leq n}(X))^{\textbf{nerve}(I[1])}} \\
    	& {L_c\big((R^{\leq n}(X)^{\textbf{nerve}(I[1])})^{\textbf{nerve}(I[1])}\big)} & {\big((M_cR^{\leq n}(X))^{\textbf{nerve}(I[1])}\big)^{\textbf{nerve}(I[1])}}
    	\arrow[from=1-1, to=1-3]
    	\arrow["{\mathscr{L}_c H^{\leq n}_X}"', from=1-1, to=2-1]
    	\arrow["{\mathscr{L}_c H^{\leq n}_X}"{pos=0.6}, from=1-1, to=2-2]
    	\arrow[from=1-3, to=2-3]
    	\arrow["{\mathscr{L}_c ((H^{\leq n}_X)^{\textbf{nerve}(I[1])})}"'{pos=0.1}, from=2-1, to=3-2]
    	\arrow[from=2-2, to=2-3]
    	\arrow["{L_c D_{R^{\leq n}(X)}}", from=2-2, to=3-2]
    	\arrow["{D_{M_c R^{\leq n}(X)}}", from=2-3, to=3-3]
    	\arrow[from=3-2, to=3-3]
    \end{tikzcd}\]
    commutes. By Proposition \ref{prop:reedy:hd=hh}, we have that the leftmost polygon commutes, while the remaining two are by inspection.

    That this diagram suffices is because one may show that the diagram
    \[\begin{tikzcd}
    	{L_c(R^{\leq n}(X)^{\textbf{nerve}(I[1])})} & {R^{\leq n}(X)(c)^{\textbf{nerve}(I[1])}} \\
    	{L_c((R^{\leq n}(X)^{\textbf{nerve}(I[1])})^{\textbf{nerve}(I[1])})} & {M_c((R^{\leq n}(X)^{\textbf{nerve}(I[1])})^{\textbf{nerve}(I[1])})}
    	\arrow[from=1-1, to=1-2]
    	\arrow["{\mathscr{L}_c ((H_X^{\leq n})^{\textbf{nerve}(I[1])})}"', from=1-1, to=2-1]
    	\arrow[two heads, from=1-2, to=2-2]
    	\arrow[from=2-1, to=2-2]
    \end{tikzcd}\]
    commutes.
\end{proof}

\begin{proposition}
    Assume the conditions of Notation \ref{prop:reedy:match_kappa}. Then $H^{\leq n}_X, \kappa^{\leq n}_X$ and $\alpha^{\leq n}_X$ are all natural in $X$.
\end{proposition}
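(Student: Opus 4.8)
The plan is to argue by induction on $n$, running in parallel with the inductive construction itself, so that the hypotheses of Notation \ref{prop:reedy:match_kappa} include the naturality in $X$ of $R^{\leq n-1}$, $\kappa^{\leq n-1}$, $\alpha^{\leq n-1}$ and $H^{\leq n-1}$. The base case $n = 0$ is immediate from Definition \ref{def:reedy:inductive0}, since all the data are identities or trivial homotopies and $\mathscr{C}^{\leq 0}$ is discrete, so every naturality square is trivially satisfied. For the inductive step, fix a morphism $f : X \rightarrow Y$ in $(\textbf{sSet}^{\mathscr{C}^{\leq n}})_{proj}$. Since $(\kappa^{\leq n}_X)(d) := (\kappa^{\leq n-1}_X)(d)$, $(\alpha^{\leq n}_X)(d) := (\alpha^{\leq n-1}_X)(d)$ and $(H^{\leq n}_X)(d) := (H^{\leq n-1}_X)(d)$ for $\textbf{deg}(d) \leq n - 1$, the required naturality squares at objects of degree at most $n - 1$ are precisely those supplied by the inductive hypothesis. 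It therefore suffices to verify naturality at an object $c$ with $\textbf{deg}(c) = n$.

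First I would record how $R^{\leq n}(f)(c)$ is induced: because $R^{\leq n}|_{\mathscr{C}^{\leq n-1}} = R^{\leq n-1}$, we have $M_c R^{\leq n}(X) = M_c R^{\leq n-1}(X)$ and hence a map $M_c R^{\leq n-1}(f)$, from which $\bigl(M_c R^{\leq n-1}(f)\bigr)^{\textbf{nerve}(I[1])}$ is obtained by postcomposition. Together with $f(c)$ and the matching map $M_c f$, these assemble into a map of the two defining pullback cones, well-defined because $M_c \kappa^{\leq n-1}$ is compatible with $f$ by the inductive naturality of $\kappa^{\leq n-1}$ and because $s_{M_c R^{\leq n-1}(X)}$ is natural in its argument; the universal property then yields $R^{\leq n}(f)(c)$. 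With this in hand, the naturality of $\kappa^{\leq n}$ and of $\alpha^{\leq n}$ at $c$ is checked against the pullback universal property. Since $(\kappa^{\leq n}_X)(c)$ is the map into the pullback $R^{\leq n}(X)(c)$ given by $1_{X(c)}$ and the composite through $c_{M_c R^{\leq n}(X)}$, and $(\alpha^{\leq n}_X)(c)$ is the projection to $X(c)$, it is enough to detect the relevant square after postcomposing (respectively precomposing) with the projections to $X(c)$ and $(M_c R^{\leq n}(X))^{\textbf{nerve}(I[1])}$. Each projected square commutes: the $X(c)$-component reduces to naturality of $f$, while the path-space component reduces to the inductive naturality of $M_c \kappa^{\leq n-1}$ together with the naturality in $B$ of the constant path map $c_B$.

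The case of $H^{\leq n}$ is the one requiring the most care, and is where I expect the main obstacle to lie. By construction $(H^{\leq n}_X)(c)$ is induced on pullbacks by a map of cospans whose vertical legs are built entirely from $c_{X(c)}$, $c_{M_c R^{\leq n}(X)}$ and $D_{M_c R^{\leq n}(X)}$. Because each of $c_B$ and $D_B$ is natural in $B \in \textbf{sSet}$, and because $M_c R^{\leq n-1}(f)$ supplies exactly the variation in the $B$-slot, the entire map of cospans varies naturally with $f$; the map it induces between the two pullbacks therefore commutes with $R^{\leq n}(f)(c)$ and $R^{\leq n}(f)(c)^{\textbf{nerve}(I[1])}$. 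On the latching side the relevant comparison is $\mathscr{L}_c H^{\leq n-1}$, whose naturality is part of the inductive hypothesis. The delicate point is that this argument silently invokes the comparison isomorphism $M_c (R^{\leq n-1}(X)^{\textbf{nerve}(I[1])}) \cong (M_c R^{\leq n-1}(X))^{\textbf{nerve}(I[1])}$, and one must confirm that this isomorphism is itself natural in $X$; it is, being the canonical comparison of a limit with a cotensor against $\textbf{nerve}(I[1])$, which commutes with any map $M_c R^{\leq n-1}(f)$. Once this naturality of the limit--cotensor interchange is noted, all three naturality statements for degree $n$ follow, completing the induction.
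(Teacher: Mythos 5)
Your proposal is correct and is essentially a careful expansion of the paper's own (one-sentence) argument: the paper likewise proves this by induction on $n$, observing that every construction used to define $\kappa^{\leq n}$, $\alpha^{\leq n}$ and $H^{\leq n}$ — the pullbacks, the maps $c_B$ and $D_B$, and the latching/matching comparisons — is itself natural in $X$. Your explicit treatment of the pullback universal property and of the naturality of the limit--cotensor comparison isomorphism simply makes precise what the paper leaves to inspection.
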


\begin{proof}
    All involved constructions to define these natural transformations are themselves natural in $X$, so the result inductively holds in $n$ by inspection.
\end{proof}

We complete our induction by proving all the necessary results about $\kappa^{\leq n}, \alpha^{\leq n}$ and $H^{\leq n}$ for the next inductive step in $n$.

\begin{proposition}
    Assume the conditions of Notation \ref{prop:reedy:match_kappa}. Then
    $$
        \alpha^{\leq n}_X (\kappa^{\leq n}_X)^+ = 1_{(X^+)^{\leq n}}.
    $$
\end{proposition}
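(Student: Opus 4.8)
The plan is to verify the equality componentwise on $(\mathscr{C}^+)^{\leq n}$, splitting into the two cases $\textbf{deg}(c) \leq n-1$ and $\textbf{deg}(c) = n$. Since naturality of both $\kappa^{\leq n}_X$ and $\alpha^{\leq n}_X$ has already been established in the preceding proposition, it suffices to check that the natural transformations $\alpha^{\leq n}_X (\kappa^{\leq n}_X)^+$ and $1_{(X^+)^{\leq n}}$ agree on each object $c$; equality of all components then yields equality of natural transformations.

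First I would dispatch the objects $d$ with $\textbf{deg}(d) \leq n-1$. By the defining clauses in Definitions \ref{def:reedy:kappa_inductive} and the definition of $\alpha^{\leq n}_X$, we have $(\kappa^{\leq n}_X)(d) = (\kappa^{\leq n-1}_X)(d)$ and $(\alpha^{\leq n}_X)(d) = (\alpha^{\leq n-1}_X)(d)$. Hence the composite at $d$ equals $\big(\alpha^{\leq n-1}_X (\kappa^{\leq n-1}_X)^+\big)(d)$, which is the identity on $X^+(d)$ by the inductive hypothesis $\alpha^{\leq n-1}_X (\kappa^{\leq n-1}_X)^+ = 1_{(X^+)^{\leq n-1}}$ assumed in Notation \ref{prop:reedy:match_kappa}.

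The only genuinely new case is an object $c$ with $\textbf{deg}(c) = n$, and here the argument reduces to unwinding the two definitions. Recall that $(\alpha^{\leq n}_X)(c)$ is, by definition, the pullback projection $R^{\leq n}(X)(c) \to X(c)$, whereas $(\kappa^{\leq n}_X)(c)$ is the map into the pullback $R^{\leq n}(X)(c)$ whose component to $X(c)$ is the identity, its other component being the constant-path composite through $M_c \kappa^{\leq n-1}_X$ followed by $c_{M_c R^{\leq n}(X)}$. Composing a map into a pullback with the projection onto a factor returns precisely the corresponding component of that map; thus $(\alpha^{\leq n}_X)(c) \circ (\kappa^{\leq n}_X)(c)$ is exactly the $X(c)$-component of $\kappa^{\leq n}_X(c)$, namely $1_{X(c)}$.

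I expect no real obstacle here: the identity is purely formal, resting entirely on the fact that $\alpha$ is defined as the projection onto the $X(c)$-factor of the pullback and $\kappa$ is defined to be the identity into that same factor. The only point meriting a word of care is ensuring the two cases assemble into a genuine equality of natural transformations rather than a mere levelwise coincidence, but this is immediate once we invoke the naturality already proved. In particular, no homotopy-theoretic input — and no reference to $H^{\leq n}_X$ — is required, in contrast to the verification that $\kappa^{\leq n}_X$ is itself a morphism.
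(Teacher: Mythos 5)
Your proof is correct and is exactly the argument the paper intends: the paper's own proof is the one-liner ``This is by definition,'' and your componentwise unwinding (inductive hypothesis in degrees $\leq n-1$, and pullback-projection composed with the identity component of $\kappa^{\leq n}_X(c)$ in degree $n$) is precisely what that phrase abbreviates.
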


\begin{proof}
    This is by definition.
\end{proof}

\begin{proposition}
    Assume the conditions of Notation \ref{prop:reedy:match_kappa}. Then $(H^{\leq n}_X)(c)$ is a very good right homotopy from $(\kappa^{\leq n}_X)^+(c) \circ (\alpha^{\leq n}_X)(c)$ to $1_{R^{\leq n}(X)^+(c)}$.
\end{proposition}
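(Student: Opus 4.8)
The plan is to verify the three defining clauses of a very good right homotopy directly from the presentation of $(H^{\leq n}_X)(c)$ as a map of cospans. Recall that by definition $(\alpha^{\leq n}_X)(c)$ is the pullback projection $R^{\leq n}(X)(c) \to X(c)$, while $(\kappa^{\leq n}_X)(c)$ pairs the identity on $X(c)$ with the composite $X(c) \to M_c X \xrightarrow{M_c \kappa^{\leq n-1}_X} M_c R^{\leq n}(X) \xrightarrow{c} (M_c R^{\leq n}(X))^{\textbf{nerve}(I[1])}$. Thus $(\kappa^{\leq n}_X)^+(c)\circ(\alpha^{\leq n}_X)(c)$ takes a point of $R^{\leq n}(X)(c)$, consisting of a point of $X(c)$ together with a compatible path in $M_c R^{\leq n}(X)$, to the point with the same $X(c)$-component but with the path replaced by the constant path at its source. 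My goal is to recognise this composite, and the identity, as the source and target of $(H^{\leq n}_X)(c)$.

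First I would settle the \emph{very good} clause. Since $(-)^{\textbf{nerve}(I[1])}$ is an internal hom it preserves the defining pullback, so (as already noted before the definition of $H^{\leq n}$) the object $(R^{\leq n}(X)(c))^{\textbf{nerve}(I[1])}$ is precisely the path object $R^{\leq n}(X)(c)^I$ of Proposition \ref{prop:reedy:xi_path_obj}, which is very good whenever $R^{\leq n}(X)(c)$ is a Kan complex. This fibrancy holds because $M_c R^{\leq n}(X) = M_c R^{\leq n-1}(X)$ is fibrant, being a matching object of the inductively Reedy fibrant functor $R^{\leq n-1}(X)$, and the projection $R^{\leq n}(X)(c) \to M_c R^{\leq n}(X)$ is a fibration by Lemma \ref{lemma:reedy:sset_path_fib}; hence $R^{\leq n}(X)(c)$ is fibrant. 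It therefore remains only to identify the source and target of $(H^{\leq n}_X)(c)$.

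Next I would compute these factorwise. Because cotensoring with $\textbf{nerve}(I[1])$ preserves the pullback and the source and target maps are natural, the projections $s, t : R^{\leq n}(X)(c)^I \to R^{\leq n}(X)(c)$ are induced componentwise by $s_\bullet$ and $t_\bullet$ on the two factors $X(c)$ and $(M_c R^{\leq n}(X))^{\textbf{nerve}(I[1])}$. On the $X(c)$-factor the defining map of $(H^{\leq n}_X)(c)$ is $c_{X(c)}$, and $s_{X(c)} \circ c_{X(c)} = t_{X(c)} \circ c_{X(c)} = 1_{X(c)}$. On the $(M_c R^{\leq n}(X))^{\textbf{nerve}(I[1])}$-factor the defining map is $D_{M_c R^{\leq n}(X)}$, for which the proposition exhibiting $D_\bullet$ as a right homotopy to $1_{B^{\textbf{nerve}(I[1])}}$ from $c_B \circ s_B$ gives $s \circ D_{M_c R^{\leq n}(X)} = c_{M_c R^{\leq n}(X)} \circ s_{M_c R^{\leq n}(X)}$ and $t \circ D_{M_c R^{\leq n}(X)} = 1$. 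Assembling these over the pullback, the source of $(H^{\leq n}_X)(c)$ fixes the $X(c)$-component and sends the path component to the constant path at its source, which is exactly $(\kappa^{\leq n}_X)^+(c)\circ(\alpha^{\leq n}_X)(c)$, while the target fixes both components and so equals $1_{R^{\leq n}(X)(c)}$.

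The main obstacle I anticipate is bookkeeping rather than conceptual: one must confirm that the source and target projections of the cotensored pullback genuinely decompose factor-by-factor as above, i.e. that applying $(-)^{\textbf{nerve}(I[1])}$ to the defining pullback and the map of cospans intertwines correctly with $s_\bullet$ and $t_\bullet$. This uses the commutation $M_c(R^{\leq n-1}(X)^{\textbf{nerve}(I[1])}) \cong (M_c R^{\leq n-1}(X))^{\textbf{nerve}(I[1])}$ already recorded, together with naturality of $c_\bullet, s_\bullet, t_\bullet$; once these identifications are in place the verification is immediate.
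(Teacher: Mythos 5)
Your proposal is correct, and its core computation is essentially the paper's own proof: both of you identify the source and target of $(H^{\leq n}_X)(c)$ by decomposing the cotensored pullback factor-by-factor, using that $s_{X(c)} \circ c_{X(c)} = t_{X(c)} \circ c_{X(c)} = 1_{X(c)}$ on the $X(c)$-factor and that $D_{M_c R^{\leq n}(X)}$ is a right homotopy from $c_{M_c R^{\leq n}(X)} \circ s_{M_c R^{\leq n}(X)}$ to the identity on the path factor, and then observing that the pullback compatibility (the pullback is taken over $s_{M_c R^{\leq n}(X)}$) identifies the constant path at the source of the path component with the constant path at the image of the $X(c)$-component, so that the source composite is exactly $(\kappa^{\leq n}_X)^+(c) \circ (\alpha^{\leq n}_X)(c)$ while the target composite is the identity.

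Where you genuinely go beyond the paper is the \emph{very good} clause. The paper's proof verifies only the source and target identifications and is silent on why $(R^{\leq n}(X)(c))^{\textbf{nerve}(I[1])}$ is a very good path object; you supply this via Proposition \ref{prop:reedy:xi_path_obj}, which requires knowing $R^{\leq n}(X)(c)$ is a Kan complex. One caveat on that extra step: your fibrancy argument appeals to Reedy fibrancy of $R^{\leq n-1}(X)$ (``the inductively Reedy fibrant functor''), together with Proposition \ref{prop:reedy:matching_fib_inductive} and Lemma \ref{lemma:reedy:sset_path_fib}, but Reedy fibrancy of $R^{\leq n-1}(X)$ is \emph{not} among the hypotheses of Notation \ref{prop:reedy:match_kappa}; in the paper it is established only afterwards, by a separate induction. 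This is easily repaired rather than a genuine gap: Reedy fibrancy can be folded into the running inductive hypotheses without circularity, since the paper's later proof of it uses only the construction of $R^{\leq n}$ and not any of the propositions about $H^{\leq n}$. As written, though, you should flag that you are strengthening the induction, or else defer the very-good clause to the point where Reedy fibrancy is available, which is effectively what the paper does implicitly.
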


\begin{proof}
    Consider, for some $c \in \mathscr{C}$, the composite morphism
    $$
        R^{\leq n}(X)(c) \xrightarrow{H^{\leq n}_X(c)} R^{\leq n}(X)(c)^{\textbf{nerve}(I[1])} \rightarrow R^{\leq n}(X)(c)
    $$
    where the final map is either the source or target map. Looking at the components of this map, we have the two maps of the form
    $$
        X(c) \xrightarrow{c_{X(c)}} X(c)^{\textbf{nerve}(I[1])} \rightarrow X(c)
    $$
    which are both identities, along with the two maps of the form
    \begin{align*}
        (M_c R^{\leq n}(X))^{\textbf{nerve}(I[1])}
        &\xrightarrow{D_{M_c R^{\leq n}(X)}} ((M_c R^{\leq n}(X))^{\textbf{nerve}(I[1])})^{\textbf{nerve}(I[1])} \\
        &\rightarrow (M_c R^{\leq n}(X))^{\textbf{nerve}(I[1])}.
    \end{align*}
    If the last map in this chain of morphisms is the source map, then the overall map $R^{\leq n}(X)(c) \rightarrow R^{\leq n}(X)(c)$ is given by the map of cospans
    \[\begin{tikzcd}
    	{X(c)} & {M_c X} && {M_c R^{\leq n}(X)} && {(M_c R^{\leq n}(X))^{\textbf{nerve}(I[1])}} \\
    	{X(c)} & {M_c X} && {M_c R^{\leq n}(X)} && {M_c R^{\leq n}(X)} \\
    	{X(c)} & {M_c X} && {M_c R^{\leq n}(X)} && {(M_c R^{\leq n}(X))^{\textbf{nerve}(I[1])}}
    	\arrow[from=1-1, to=1-2]
    	\arrow["id"', from=1-1, to=2-1]
    	\arrow["{M_c \kappa^{\leq n}_X}", from=1-2, to=1-4]
    	\arrow["id"', from=1-2, to=2-2]
    	\arrow["id", from=1-4, to=2-4]
    	\arrow["{s_{M_c R^{\leq n}(X)}}"', from=1-6, to=1-4]
    	\arrow["{s_{M_c R^{\leq n}(X)}}", from=1-6, to=2-6]
    	\arrow[from=2-1, to=2-2]
    	\arrow["id"', from=2-1, to=3-1]
    	\arrow["{M_c \kappa^{\leq n}_X}", from=2-2, to=2-4]
    	\arrow["id"', from=2-2, to=3-2]
    	\arrow["id", from=2-4, to=3-4]
    	\arrow["id"', from=2-6, to=2-4]
    	\arrow["{c_{M_c R^{\leq n}(X)}}", from=2-6, to=3-6]
    	\arrow[from=3-1, to=3-2]
    	\arrow["{M_c \kappa^{\leq n}_X}"', from=3-2, to=3-4]
    	\arrow["{s_{M_c R^{\leq n}(X)}}", from=3-6, to=3-4]
    \end{tikzcd}\]
    It is clear that the first of these maps is $(\alpha_X^{\leq n})(c)$ by the isomorphism $X(c) \cong X(c) \times_{M_c R^{\leq n}(X)} M_c R^{\leq n}(X)$, so that the induced map of pullbacks $R^{\leq n}(X)(c) \rightarrow R^{\leq n}(X)(c)$ is precisely the result of applying $(\kappa^{\leq n}_X)^+(c) \circ (\alpha^{\leq n}_X)(c)$.
    
    If instead the final map in the composite is the target map, we have that the induced map $R^{\leq n}(X)(c) \rightarrow R^{\leq n}(X)(c)$ the identity. Thus, the homotopy $(H_X^{\leq n})(c)$ is between the correct morphisms as desired.
\end{proof}

With all of these inductive pieces in place, we are able to continue the induction to all $n$ and obtain our construction in full:

\begin{definition}
    Suppose $X : \mathscr{C} \rightarrow \textbf{sSet}$ is levelwise fibrant. Define $R(X)$ such that $R(X)|_{\mathscr{C}^{\leq n}} = R^{\leq n}(X)$. Do similarly for $\kappa_X, \alpha_X$ and $H_X$. This defines $R, \kappa, \alpha$ and $H$ in their entirety.
\end{definition}

This completes the definition of $R$. We must now prove that we have indeed obtained a valid Reedy fibrant replacement:

\begin{proposition}
    $\kappa_X : X \rightarrow R(X)$ is a levelwise trivial cofibration.
\end{proposition}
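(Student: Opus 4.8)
The plan is to argue levelwise, inducting on $n = \textbf{deg}(c)$, and to reduce each inductive step to a single application of Lemma \ref{lemma:reedy:sset_path_fib}. For the base case $n = 0$ we have $R^{\leq 0}(X)(c) = X(c)$ and $\kappa^{\leq 0}_X(c) = 1_{X(c)}$ by Definition \ref{def:reedy:inductive0}, which is a trivial cofibration for trivial reasons, so all the content lies in the inductive step.

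The key observation is that the level $R^{\leq n}(X)(c)$ and the map $\kappa^{\leq n}_X(c)$ are, by Definitions \ref{def:reedy:r_constr} and \ref{def:reedy:kappa_inductive}, precisely an instance of the data appearing in Lemma \ref{lemma:reedy:sset_path_fib}. Concretely, I would set $Y := M_c R^{\leq n}(X)$ and take $f : X(c) \to Y$ to be the composite $X(c) \to M_c X \xrightarrow{M_c \kappa^{\leq n-1}_X} M_c R^{\leq n}(X)$. Then $R^{\leq n}(X)(c)$ is exactly the pullback $X(c) \times_Y Y^{\textbf{nerve}(I[1])}$ formed along the source map $s_Y$, and $\kappa^{\leq n}_X(c)$ is exactly the map given by $1_{X(c)}$ together with $c_Y \circ f$. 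This is verbatim the map that the second half of Lemma \ref{lemma:reedy:sset_path_fib} asserts to be a trivial cofibration, so once the lemma applies we are done at $c$.

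To invoke the lemma I need both $X(c)$ and $Y = M_c R^{\leq n}(X)$ to be Kan complexes. The former is immediate since $X$ is projective fibrant. The latter is the crux: it requires knowing that $R^{\leq n-1}(X)$ is Reedy fibrant, so that its matching objects are fibrant. The clean way to arrange this is to run a \emph{simultaneous} induction on $n$ that also records Reedy fibrancy. Assuming $R^{\leq n-1}(X)$ is Reedy fibrant, the matching object $M_c R^{\leq n}(X)$ (which depends only on the restriction to degrees $< n$, and hence coincides with $M_c R^{\leq n-1}(X)$) is a Kan complex by the standard fact that matching objects of Reedy fibrant diagrams are fibrant. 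Lemma \ref{lemma:reedy:sset_path_fib} then yields, from the same data, both conclusions at once: its fibration half shows $R^{\leq n}(X)(c) \to M_c R^{\leq n}(X)$ is a fibration, so that the latching--matching factorization of Definition \ref{def:reedy:r_constr} exhibits $R^{\leq n}(X)$ as Reedy fibrant at $c$, while its trivial-cofibration half gives that $\kappa^{\leq n}_X(c)$ is a trivial cofibration. This closes the induction.

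The main obstacle is exactly this interdependence: the fact that $M_c R^{\leq n}(X)$ is Kan is not free but rests on Reedy fibrancy of the lower stages, which is itself produced by the fibration half of Lemma \ref{lemma:reedy:sset_path_fib}. Bundling the two claims into one induction on $n$ resolves the circularity, and once that is set up each step is a direct appeal to the lemma; the remaining checks, namely that the pullback and the map literally match the lemma's data and that matching objects of Reedy fibrant diagrams are fibrant, are routine.
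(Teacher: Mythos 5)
Your proposal is correct, but it takes a genuinely different route from the paper's own proof of this proposition. The paper argues with no fibrancy input at all: the retraction $\alpha^{\leq n}_X(c)$ (satisfying $\alpha^{\leq n}_X(c) \circ \kappa^{\leq n}_X(c) = 1_{X(c)}$) shows $\kappa^{\leq n}_X(c)$ is a levelwise inclusion, hence a cofibration, and the homotopy $H^{\leq n}_X(c)$, after geometric realization along $\Delta[1] \rightarrow \textbf{nerve}(I[1])$, exhibits $\lvert \kappa^{\leq n}_X(c) \rvert$ and $\lvert \alpha^{\leq n}_X(c) \rvert$ as a homotopy equivalence, so $\kappa^{\leq n}_X(c)$ is a weak equivalence. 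This keeps the present proposition logically independent of Reedy fibrancy of $R(X)$, which the paper establishes in a separate, later induction that is essentially the fibrancy half of yours. Your version bundles the two statements into one induction and extracts the trivial cofibration directly from the second half of Lemma \ref{lemma:reedy:sset_path_fib}; this is more economical (no $\alpha$, $H$, or geometric realization needed here), at the cost of making the weak-equivalence claim rest on Kan-ness of matching objects. One point you gloss over: the ``standard fact'' you invoke, Proposition \ref{prop:reedy:matching_fib_inductive} (Hirschhorn's Cor.~15.3.12), is stated for an object $x$ lying in the index category of a Reedy fibrant diagram, whereas your $c$ has degree $n$ while $R^{\leq n-1}(X)$ is defined only on $\mathscr{C}^{\leq n-1}$. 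The paper patches exactly this gap in its Reedy fibrancy proof: it factorizes $X(d) \sqcup_{L_d X} L_d R^{\leq n-1}(X) \rightarrow M_d R^{\leq n-1}(X)$ for each $d$ of degree $n$ to build an auxiliary Reedy fibrant extension $Q(X)$ on $\mathscr{C}^{\leq n}$ with $M_c Q(X) = M_c R^{\leq n-1}(X)$, and only then cites fibrancy of matching objects. With that patch inserted, your simultaneous induction goes through and proves both this proposition and the Reedy fibrancy of $R(X)$ at once.
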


\begin{proof}
    Consider some $c \in \mathscr{C}$ with $\textbf{deg}(c) = n$. It will suffice to prove that
    $$
        \kappa^{\leq n}_X(c) : X(c) \rightarrow X(c) \times_{M_c R^{\leq n}(X)} (M_c R^{\leq n}(X))^{\textbf{nerve}(I[1])}
    $$
    is a trivial cofibration in $\textbf{sSet}$.

    Note that the existence of $\alpha^{\leq n}_X(c)$ implies that this is a levelwise inclusion, hence a cofibration. Moreover, $H^{\leq n}_X(c)$ exhibits $\kappa^{\leq n}_X(c)$ as a weak equivalence in the model structure in $\textbf{sSet}$; indeed, applying geometric realization to the induced map $R^{\leq n}(X)(c) \times \textbf{nerve}(I[1]) \rightarrow R^{\leq n}(X)(c)$ gives a homotopy
    \begin{align*}
        \lvert R^{\leq n}(X)(c) \rvert \times [0, 1] &\cong \lvert R^{\leq n}(X)(c) \rvert \times \lvert \Delta[1] \rvert \\
        & \rightarrow \lvert R^{\leq n}(X)(c) \rvert \times \lvert \textbf{nerve}(I[1]) \rvert \\
        &\rightarrow \lvert R^{\leq n}(X)(c) \rvert
    \end{align*}
    as geometric realization commutes with products and by the map $\Delta[1] \rightarrow \textbf{nerve}(I[1])$ identifying the morphism $0 \rightarrow 1$ in $I[1]$. This homotopy is from $\lvert \kappa^{\leq n}_X(c) \rvert \circ \lvert \alpha^{\leq n}_X(c) \vert$ to the identity, which together with the fact that $\lvert \alpha^{\leq n}_X(c) \rvert \circ \lvert \kappa^{\leq n}_X(c) \vert = id$ shows we have a weak equivalence of simplicial sets and therefore a trivial cofibration as needed.
\end{proof}

Note by 2-out-of-3 that $\alpha_X$ is also a levelwise weak equivalence. Moreover, note that we have not shown that $\kappa_X$ is in general a Reedy trivial cofibration, only a weak equivalence and levelwise cofibration. However, in the case of \emph{elegant} Reedy categories, we do in fact have a Reedy trivial cofibration.

In order to prove that $R(X)$ is Reedy fibrant, we will need an intermediate result on Reedy model categories:

\begin{proposition}[{\cite[Cor. 15.3.12]{hirschhornModelCategoriesTheir2009}}] \label{prop:reedy:matching_fib_inductive}
    Let $\mathscr{C}$ be a Reedy category and $\mathscr{M}$ a model category. Suppose $X\in \mathscr{M}^{\mathscr{C}}$ is Reedy fibrant and $x \in \mathscr{C}$. Then $M_x X$ is fibrant in $\mathscr{M}$.
\end{proposition}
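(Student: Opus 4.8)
The plan is to realise the matching object as a limit over an auxiliary inverse category and then invoke the fact that such limits are right Quillen. Recall that $M_x X = \lim_{\mathscr{D}}(X|_{\mathscr{D}})$, where $\mathscr{D} := \partial(x \downarrow \mathscr{C}^-)$ is the matching category at $x$, whose objects are the non-identity degree-lowering maps $x \to y$ and whose morphisms $(x \to y) \to (x \to y')$ are the arrows $y \to y'$ in $\mathscr{C}^-$ that commute with them. First I would observe that $\mathscr{D}$ is itself a Reedy category in which every morphism strictly lowers degree: assigning to the object $(x \to y)$ the degree $\textbf{deg}(y)$, every non-identity morphism comes from a non-identity arrow of $\mathscr{C}^-$ and hence strictly decreases this degree. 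Thus $\mathscr{D}$ is an \emph{inverse} category, and $M_x X$ is the limit of $X$ restricted along $\mathscr{D} \to \mathscr{C}$.

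The central step is to verify that $X|_{\mathscr{D}}$ is Reedy fibrant for the inverse Reedy model structure on $\mathscr{M}^{\mathscr{D}}$, i.e.\ that for each object $(x \xrightarrow{f} y)$ the matching map $X(y) \to M^{\mathscr{D}}_{(x \to y)}(X|_{\mathscr{D}})$ is a fibration. To this end I would identify the matching category of $(x \to y)$ inside $\mathscr{D}$ with the matching category of $y$ inside $\mathscr{C}$. Concretely, a non-identity morphism out of $(x \to y)$ in $\mathscr{D}$ is a non-identity arrow $g : y \to y'$ in $\mathscr{C}^-$, and $g \mapsto (x \xrightarrow{gf} y')$ is a well-defined object of $\mathscr{D}$ since $\mathscr{C}^-$ is closed under composition and $\textbf{deg}(y') < \textbf{deg}(x)$ forces $gf$ to be non-trivial. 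I would then check that $g \mapsto g$ gives an isomorphism of categories $\big(\text{matching category of } (x \to y) \text{ in } \mathscr{D}\big) \cong \partial(y \downarrow \mathscr{C}^-)$ compatible with the two restrictions of $X$. Consequently $M^{\mathscr{D}}_{(x \to y)}(X|_{\mathscr{D}}) \cong M_y X$, and the matching map in $\mathscr{D}$ is exactly $X(y) \to M_y X$, which is a fibration by the assumed Reedy fibrancy of $X$ in $\mathscr{C}$. There is no circularity here: this uses only that the map is a fibration, not that $M_y X$ is already fibrant.

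Finally I would conclude using that, for the inverse category $\mathscr{D}$, the Reedy cofibrations are precisely the levelwise cofibrations (the latching objects being initial), so the constant-diagram functor $\mathscr{M} \to \mathscr{M}^{\mathscr{D}}$ is left Quillen and its right adjoint $\lim : \mathscr{M}^{\mathscr{D}} \to \mathscr{M}$ is right Quillen. Since $X|_{\mathscr{D}}$ is Reedy fibrant, i.e.\ fibrant in $\mathscr{M}^{\mathscr{D}}$, its limit $M_x X = \lim_{\mathscr{D}}(X|_{\mathscr{D}})$ is fibrant in $\mathscr{M}$, as required. (By passing to $\mathscr{M}^{op}$ and $\mathscr{C}^{op}$ this is dual to the statement that latching objects of Reedy cofibrant diagrams are cofibrant, so either could be proved and the other deduced.) The main obstacle is the bookkeeping in the second paragraph: one must check carefully that the comparison of matching categories is a genuine isomorphism — handling the non-identity condition and the degree inequalities — since the identification $M^{\mathscr{D}}_{(x \to y)}(X|_{\mathscr{D}}) \cong M_y X$ is precisely what converts the abstract inverse-Reedy fibrancy condition into the concrete hypothesis on $X$.
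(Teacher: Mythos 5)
Your proof is correct, but there is nothing in the paper to compare it against: the paper states this proposition purely as a citation to \cite[Cor.\ 15.3.12]{hirschhornModelCategoriesTheir2009} and gives no argument of its own. What you have written is, in effect, the standard proof underlying the cited result, and all of its delicate points check out. The matching category $\mathscr{D} = \partial(x \downarrow \mathscr{C}^-)$ is indeed inverse, since a non-identity morphism of $\mathscr{D}$ is a non-identity arrow $g : y \to y'$ of $\mathscr{C}^-$ and hence strictly lowers $\textbf{deg}(y)$; your identification of the matching category of $(x \xrightarrow{f} y)$ in $\mathscr{D}$ with $\partial(y \downarrow \mathscr{C}^-)$ is a genuine isomorphism (the assignment $g \mapsto (x \xrightarrow{gf} y')$ lands in $\mathscr{D}$ because $\textbf{deg}(y') < \textbf{deg}(y) < \textbf{deg}(x)$ rules out $gf$ being an identity, and the triangle condition on morphisms $h$ reduces to $hg = g'$, which already implies $h(gf) = g'f$); and the final step, that $\lim_{\mathscr{D}}$ is right Quillen because latching objects over an inverse category are initial and hence Reedy cofibrations are levelwise, is the standard adjoint argument, so fibrancy of $X|_{\mathscr{D}}$ gives fibrancy of $M_x X$ (including in the degenerate case where $\mathscr{D}$ is empty and $M_x X$ is terminal). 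The trade-off between the two treatments is the obvious one: the paper's citation keeps its exposition short, while your argument makes the statement self-contained and exposes the mechanism --- restriction to an inverse matching category plus a right Quillen limit --- which is also conceptually close to the machinery the paper itself exploits in its inductive construction of $R$.
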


\begin{proposition}
    Suppose $X$ is projective fibrant. Then $R(X)$ is Reedy fibrant.
\end{proposition}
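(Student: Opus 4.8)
The plan is to verify directly that for every $c \in \mathscr{C}$ the matching map $R(X)(c) \to M_c R(X)$ is a fibration of simplicial sets, proceeding by induction on $n = \textbf{deg}(c)$ and exploiting the fact that $R(X)|_{\mathscr{C}^{\leq n}} = R^{\leq n}(X)$, so that it suffices to show each $R^{\leq n}(X)$ is Reedy fibrant on $\mathscr{C}^{\leq n}$. Since $R^{\leq n}(X)|_{\mathscr{C}^{\leq n-1}} = R^{\leq n-1}(X)$ and the matching objects of objects of degree strictly less than $n$ are unchanged, the inductive hypothesis handles all objects of lower degree, and only the objects $c$ with $\textbf{deg}(c) = n$ remain to be checked. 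For the base case $n = 0$, the matching objects are terminal, so the matching map is $X(c) \to \ast$, which is a fibration because $X(c)$ is a Kan complex as $X$ is projective fibrant.

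For the inductive step, fix $c$ with $\textbf{deg}(c) = n$. The key observation is that $M_c R^{\leq n}(X)$ depends only on the values of $R^{\leq n}(X)$ in degrees strictly below $n$, that is, on $R^{\leq n-1}(X)$, so $M_c R^{\leq n}(X) = M_c R^{\leq n-1}(X)$. By the inductive hypothesis $R^{\leq n-1}(X)$ is Reedy fibrant, whence Proposition \ref{prop:reedy:matching_fib_inductive} guarantees that $M_c R^{\leq n}(X)$ is a Kan complex. Recalling from Definition \ref{def:reedy:r_constr} that $R^{\leq n}(X)(c)$ is the pullback
$$ X(c) \times_{M_c R^{\leq n}(X)} (M_c R^{\leq n}(X))^{\textbf{nerve}(I[1])}, $$
where one leg is the composite $X(c) \to M_c X \xrightarrow{M_c \kappa_X^{\leq n-1}} M_c R^{\leq n}(X)$ and the other is the source map, and that the matching map is precisely the pullback projection to $(M_c R^{\leq n}(X))^{\textbf{nerve}(I[1])}$ followed by the target map $t_{M_c R^{\leq n}(X)}$, we are exactly in the situation of Lemma \ref{lemma:reedy:sset_path_fib}.

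Applying that lemma to the map $X(c) \to M_c R^{\leq n}(X)$ between the Kan complexes $X(c)$ and $M_c R^{\leq n}(X)$ then immediately yields that the matching map is a fibration, completing the induction. Passing to the colimit over all $n$ shows $R(X)$ is Reedy fibrant.

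The step requiring the most care is confirming that the matching object $M_c R^{\leq n}(X)$ is genuinely fibrant, since Lemma \ref{lemma:reedy:sset_path_fib} applies only to maps between Kan complexes; this is where the interplay between the inductive hypothesis and Proposition \ref{prop:reedy:matching_fib_inductive} is essential. Beyond that, the argument is essentially bookkeeping, as the identification of the matching map with the projection-then-target composite has already been carried out in Definition \ref{def:reedy:r_constr}, so the fibrancy statement reduces cleanly to a single application of Lemma \ref{lemma:reedy:sset_path_fib}.
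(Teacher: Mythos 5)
Your overall strategy is the same as the paper's: induct on degree, identify $M_c R^{\leq n}(X)$ with $M_c R^{\leq n-1}(X)$, note that the matching map of $R^{\leq n}(X)$ at $c$ is exactly the projection-then-target map from Definition \ref{def:reedy:r_constr}, and reduce its fibrancy to Lemma \ref{lemma:reedy:sset_path_fib}. The one step that does not go through as written is the sentence ``by the inductive hypothesis $R^{\leq n-1}(X)$ is Reedy fibrant, whence Proposition \ref{prop:reedy:matching_fib_inductive} guarantees that $M_c R^{\leq n}(X)$ is a Kan complex.'' Proposition \ref{prop:reedy:matching_fib_inductive} applies to a Reedy fibrant diagram $X \in \mathscr{M}^{\mathscr{C}}$ and an object $x \in \mathscr{C}$; here your diagram $R^{\leq n-1}(X)$ is defined only on $\mathscr{C}^{\leq n-1}$, while $c$ has degree $n$ and so is \emph{not} an object of $\mathscr{C}^{\leq n-1}$. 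The matching object $M_c R^{\leq n-1}(X)$ is still well defined, since the matching category $\partial(c \downarrow \mathscr{C}^-)$ lies in $\mathscr{C}^{\leq n-1}$, but its fibrancy is not literally an instance of the cited proposition, so the step you yourself identify as the crux of the argument is left unsupported.

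The conclusion is true, and the paper's proof shows exactly what is needed to repair it: first extend $R^{\leq n-1}(X)$ to a Reedy fibrant diagram $Q(X) : \mathscr{C}^{\leq n} \to \textbf{sSet}$ with $Q(X)|_{\mathscr{C}^{\leq n-1}} = R^{\leq n-1}(X)$, by factoring each map $X(d) \sqcup_{L_d X} L_d R^{\leq n-1}(X) \to M_d R^{\leq n-1}(X)$, for $d$ of degree $n$, into a trivial cofibration followed by a fibration, following \cite[pg. 293]{hirschhornModelCategoriesTheir2009}. Since $c$ is an object of the Reedy category $\mathscr{C}^{\leq n}$ and $Q(X)$ is Reedy fibrant on it, Proposition \ref{prop:reedy:matching_fib_inductive} now applies and gives that $M_c Q(X) = M_c R^{\leq n-1}(X) = M_c R^{\leq n}(X)$ is a Kan complex; the remainder of your argument, applying Lemma \ref{lemma:reedy:sset_path_fib} to the map $X(c) \to M_c R^{\leq n}(X)$ between Kan complexes, then goes through verbatim. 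So this is not a wrong approach but a genuine missing idea: some auxiliary device (the paper's $Q(X)$, or an independent proof that matching objects of Reedy fibrant diagrams on truncations are fibrant) is required before the cited proposition can be invoked.
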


\begin{proof}
    We prove this by induction. Suppose $c \in \mathscr{C}$ has $\textbf{deg}(c) = 0$. Then $R(X)(c) = X(c)$, so we have Reedy fibrancy at $c$ so that $R^{\leq 0}(X)$ is Reedy fibrant.

    For the inductive step, suppose $R^{\leq n - 1}(X)$ is Reedy fibrant. Then, following the methods in \cite[pg. 293]{hirschhornModelCategoriesTheir2009}, for every $d \in \mathscr{C}$ of degree $n$ we can factorize the map $X(d) \sqcup_{L_d X} L_d R^{\leq n - 1}(X) \rightarrow M_d R^{\leq n - 1}(X)$ into a trivial cofibration followed by a fibration. This yields a Reedy fibrant functor
    $$
        Q(X) : \mathscr{C}^{\leq n} \rightarrow \textbf{sSet}
    $$
    where $Q(X)|_{\mathscr{C}^{\leq n - 1}} = R^{\leq n - 1}(X)$. Then we have that $M_c Q(X) = M_c R^{\leq n - 1}(X)$ is a Kan complex by Proposition \ref{prop:reedy:matching_fib_inductive}. Thus, we have a map $(\kappa_X)(c) : X(c) \rightarrow M_c R(X) = M_c R^{\leq n - 1}(X)
    $ between Kan complexes, which induces the map
    $$
        R(X)(c) = X(c) \times_{M_c R(X)} (M_c R(X))^{\textbf{nerve}(I[1])} \rightarrow M_c R(X)
    $$
    as a fibration by Lemma \ref{lemma:reedy:sset_path_fib}.
\end{proof}

There are a few properties of $R$ which we find worth mentioning:

\begin{proposition}
    $f : X \rightarrow Y$ is a levelwise weak equivalence in $(\textbf{sSet}^\mathscr{C})_{proj}$ if and only if $R(f)$ is a levelwise weak equivalence.
\end{proposition}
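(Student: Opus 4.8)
The plan is to exploit the naturality of $\kappa : id \Rightarrow R$ together with the fact, just established in the preceding proposition, that each component $\kappa_X$ is a levelwise trivial cofibration and in particular a levelwise weak equivalence. The whole statement then reduces to the 2-out-of-3 property of weak equivalences in $\textbf{sSet}$, applied pointwise.

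First I would fix an object $c \in \mathscr{C}$ and record the naturality square of $\kappa$ evaluated at $c$:
\[\begin{tikzcd}
	{X(c)} & {Y(c)} \\
	{R(X)(c)} & {R(Y)(c)}
	\arrow["{f(c)}", from=1-1, to=1-2]
	\arrow["{\kappa_X(c)}"', from=1-1, to=2-1]
	\arrow["{\kappa_Y(c)}", from=1-2, to=2-2]
	\arrow["{R(f)(c)}"', from=2-1, to=2-2]
\end{tikzcd}\]
This commutes, so $R(f)(c) \circ \kappa_X(c) = \kappa_Y(c) \circ f(c)$, and both vertical maps $\kappa_X(c)$ and $\kappa_Y(c)$ are weak equivalences of Kan complexes by the preceding proposition.

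For the forward direction, suppose $f$ is a levelwise weak equivalence, so that $f(c)$ is a weak equivalence. Then $\kappa_Y(c) \circ f(c)$ is a composite of weak equivalences, hence a weak equivalence, and by the commuting square it equals $R(f)(c) \circ \kappa_X(c)$. Since $\kappa_X(c)$ is a weak equivalence, 2-out-of-3 forces $R(f)(c)$ to be a weak equivalence; as $c$ was arbitrary, $R(f)$ is a levelwise weak equivalence. The converse is symmetric: if $R(f)(c)$ is a weak equivalence, then $R(f)(c) \circ \kappa_X(c) = \kappa_Y(c) \circ f(c)$ is a weak equivalence, and cancelling the weak equivalence $\kappa_Y(c)$ by 2-out-of-3 shows $f(c)$ is a weak equivalence.

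I do not anticipate any genuine obstacle here: the only substantive input is the already-proven statement that $\kappa_X$ is a levelwise weak equivalence, and the rest is a formal 2-out-of-3 argument. The one point worth stating carefully is that the argument is carried out pointwise at each $c \in \mathscr{C}$, since ``levelwise weak equivalence'' is by definition a pointwise condition and 2-out-of-3 is a property of the model structure on $\textbf{sSet}$ rather than of the diagram category.
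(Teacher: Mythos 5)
Your proof is correct and is essentially identical to the paper's: both fix $c \in \mathscr{C}$, invoke the naturality square of $\kappa$ at $c$ with vertical maps $\kappa_X(c)$ and $\kappa_Y(c)$ known to be weak equivalences from the preceding proposition, and conclude in both directions by 2-out-of-3. The only cosmetic difference is that the paper writes $R(X)(c)$ explicitly as the pullback $X(c) \times_{M_c X} (M_c X)^{\textbf{nerve}(I[1])}$, which changes nothing in the argument.
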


\begin{proof}
    For any $c \in \mathscr{C}$, we have a natural diagram
    \[\begin{tikzcd}
    	{X(c)} && {Y(c)} \\
    	{X(c) \times_{M_c X} (M_c X)^{\textbf{nerve}(I[1])}} && {Y(c) \times_{M_c Y} (M_c Y)^{\textbf{nerve}(I[1])}}
    	\arrow["{f(c)}", from=1-1, to=1-3]
    	\arrow["{(\kappa_X)(c)}"', from=1-1, to=2-1]
    	\arrow["{(\kappa_Y)(c)}", from=1-3, to=2-3]
    	\arrow["{R(f)(c)}"', from=2-1, to=2-3]
    \end{tikzcd}\]
    where all but the horizontal maps are weak equivalences. By 2-out-of-3, the result holds.
\end{proof}

\begin{proposition}
    $R$ preserves finite products up to natural isomorphism.
\end{proposition}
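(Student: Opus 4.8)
The plan is to produce an isomorphism $R(X \times Y) \cong R(X) \times R(Y)$, natural in both $X$ and $Y$, by induction on Reedy degree, while simultaneously tracking that the auxiliary natural transformations $\kappa$, $\alpha$ and $H$ are compatible with this isomorphism. The conceptual engine is that finite products in $\textbf{sSet}^\mathscr{C}$ are computed levelwise, and that every functor used to build $R^{\leq n}(X)(c)$ out of lower-degree data preserves finite products: the matching functor $M_c$ is a limit; the exponential $(-)^{\textbf{nerve}(I[1])}$ is a right adjoint (to $- \times \textbf{nerve}(I[1])$) in the cartesian closed category $\textbf{sSet}$; and a pullback of a product diagram is the product of the two pullbacks, since limits commute with limits. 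Naturality in the inputs $X, Y$ is the easy direction, as every building block is itself natural in its functor argument.

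For the base case, $R^{\leq 0}$ is the identity and the claim is immediate. For the inductive step I would fix $c$ with $\textbf{deg}(c) = n$ and assume the product isomorphism, together with the compatibility of $\kappa^{\leq n-1}, \alpha^{\leq n-1}, H^{\leq n-1}$, on $\mathscr{C}^{\leq n-1}$. Applying $M_c$ to the inductive isomorphism gives $M_c R^{\leq n}(X \times Y) \cong M_c R^{\leq n}(X) \times M_c R^{\leq n}(Y)$, and the exponential then gives $(M_c R^{\leq n}(X \times Y))^{\textbf{nerve}(I[1])} \cong (M_c R^{\leq n}(X))^{\textbf{nerve}(I[1])} \times (M_c R^{\leq n}(Y))^{\textbf{nerve}(I[1])}$. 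By naturality of $\kappa$ and of the matching maps, the map $(X \times Y)(c) \to M_c R^{\leq n}(X \times Y)$ entering the defining pullback is identified with the product of the corresponding maps for $X$ and $Y$, and the source map $s$ splits as a product likewise. Hence the pullback defining $R^{\leq n}(X\times Y)(c)$ is the product of the two pullbacks, yielding the objectwise isomorphism $R^{\leq n}(X \times Y)(c) \cong R^{\leq n}(X)(c) \times R^{\leq n}(Y)(c)$.

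To upgrade these objectwise isomorphisms to an isomorphism of functors on $\mathscr{C}^{\leq n}$, I would verify compatibility with the morphisms of $\mathscr{C}$ through the universal property of the pullback rather than through the latching objects. For a degree-lowering map $c \to d$ the functorial action factors through $M_c R^{\leq n}(X\times Y)$, which is a product, so compatibility is immediate. For a degree-raising map $\phi : e \to c$ I would check the two components of the induced map into the pullback $R^{\leq n}(X\times Y)(c)$: the component into $(X\times Y)(c)$ equals $(X\times Y)(\phi)\circ(\alpha^{\leq n}_{X\times Y})(e)$ by naturality of $\alpha$, and here $(X\times Y)(\phi) = X(\phi) \times Y(\phi)$ because the functorial maps of a product functor are products, while $(\alpha^{\leq n}_{X\times Y})(e)$ splits as a product by the strengthened hypothesis (note $\phi \in \mathscr{C}^+$, so $\alpha$ is defined here); the component into the path space is treated identically using naturality and product-compatibility of $H$, $\kappa$, the constant path map and the source/target maps. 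The same bookkeeping confirms that $\kappa^{\leq n}$, $\alpha^{\leq n}$ and $H^{\leq n}$ stay compatible with the product isomorphism, closing the induction. The empty product is handled in parallel: $R$ preserves the terminal object, since $M_c$ and $(-)^{\textbf{nerve}(I[1])}$ send terminal to terminal.

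The main obstacle I anticipate is precisely that the latching functor $\mathscr{L}_c = \text{colim}_{\partial(\mathscr{C}^+ \downarrow c)}$ is a colimit and does \emph{not} preserve finite products, so one cannot naively assert $L_c R^{\leq n}(X\times Y) \cong L_c R^{\leq n}(X) \times L_c R^{\leq n}(Y)$. The whole design of the argument above is to sidestep this: the objectwise isomorphism depends only on the limit-type data in the defining pullback, and naturality in $\mathscr{C}$ is verified through the pullback's universal property by factoring the structure maps through the product-preserving natural transformations $\alpha$, $\kappa$ and $H$, never through the latching objects themselves.
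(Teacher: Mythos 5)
Your proof is correct and follows the same basic template as the paper's: induction on degree, with the levelwise isomorphism coming from the fact that every ingredient of the defining pullback ($M_c$, the exponential $(-)^{\textbf{nerve}(I[1])}$, and the pullback itself) commutes with finite products, followed by a compatibility check with the Reedy structure maps. Where you genuinely diverge is the degree-raising direction. The paper handles it through the latching objects: by the inductive isomorphism $R(X\times Y) \cong R(X) \times R(Y)$ on $\mathscr{C}^{\leq n-1}$, one applies $L_c$ to this isomorphism of \emph{diagrams} to get $L_c R(X \times Y) \cong L_c\big(R(X)\times R(Y)\big)$, and then checks that the same latching maps are induced on both sides. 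Note that this never requires the false claim $L_c(A \times B) \cong L_c A \times L_c B$; the obstacle you flag is real for the latching functor in general, but the paper's route does not incur it, since $L_c$ is only ever applied to an isomorphism of functors, not distributed across a product. Your alternative --- checking the action of degree-raising maps componentwise through the pullback's universal property, using the $\mathscr{C}^+$-naturality and product-compatibility of $\alpha$, $\kappa$ and $H$ (hence a strengthened induction hypothesis tracking these) --- is in substance an unwound version of the same check, because the latching maps of $R^{\leq n}$ are defined precisely by the components $\mathscr{L}_c \alpha^{\leq n-1}$ and $\mathscr{L}_c H^{\leq n-1}$. What your version buys is explicitness about exactly which naturality properties are used (and it correctly observes that only naturality over $\mathscr{C}^+$ is available for $\alpha$ and $H$); what the paper's version buys is brevity, at the cost of leaving the latching-map comparison as a "quick check."
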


\begin{proof}
    It is evident that there is a levelwise isomorphism
    $$
        R(X \times Y)(c) \cong R(X)(c) \times R(Y)(c)
    $$
    and that this commutes with matching object maps. For latching object maps, one may assume by induction that the latching objects of $R(X) \times R(Y)$ and $R(X \times Y)$ are isomorphic. Then one quickly checks that the same latching maps are induced in both instances.
\end{proof}
\section{Taking Examples to the (Homotopy) Limit}

To expand our intuitions for $R$'s behavior, we analyze its use in constructing standard homotopy limits. The simplest non-trivial categories to examine are those Reedy categories $\mathscr{C}$ such that $\mathscr{C}^+$ is discrete. In this case, the latching objects and thus latching maps are all trivial, so we can entirely focus on the matching objects. An interesting consequence of this fact is that the Reedy cofibrations in $\textbf{sSet}^{\mathscr{C}}$ are precisely the levelwise cofibrations, meaning the Reedy and injective model structures agree. Our construction therefore necessarily obtains a model of homotopy limit over such diagrams.

Consider, for instance, the cospan diagram $C := \{x \rightarrow y \leftarrow z\}$, where $\textbf{deg}(x) = \textbf{deg}(z) = 1$ and $\textbf{deg}(y) = 0$. A projective fibrant functor $F : C \rightarrow \textbf{sSet}$ then amounts to a cospan of simplicial sets
$$
    C_x \rightarrow C_y \leftarrow C_z
$$
where $C_x, C_y$ and $C_z$ are all Kan complexes. The Reedy fibrant replacement $R(F)$ is then easily checked to be the cospan
$$
    C_x \times_{C_y} C_y^{\textbf{nerve}(I[1])} \rightarrow C_y \leftarrow C_y^{\textbf{nerve}(I[1])} \times_{C_y} C_z.
$$
If one were to take a pullback of this cospan, one immediately obtains our chosen definition of homotopy pullback $C_x \times^h_{C_y} C_z$ in terms of $\Lambda$.

Another example comes from the diagram for equalizers, namely $E := \{x \rightrightarrows y\}$, with $\textbf{deg}(x) = 1$ and $\textbf{deg}(y) = 0$. Then we have, for a functor $G : E \rightarrow \textbf{sSet}$, that $R(G)$ is the diagram
$$
    E_y \times_{E_x^2} (E_x^2)^{\textbf{nerve}(I[1])} \rightrightarrows E_x.
$$
The limit of this diagram is similarly a reasonable model of homotopy-coherent equalizer, whose $0$-simplices are triples $(p, e, q)$, where $e \in (E_y)_0$ and $p, q : \textbf{nerve}(I[1]) \rightarrow E_y$ are paths such that $p(1) = q(0) = e$ and the image of $p(0)$ under the first map $E_x \rightarrow E_y$ is equal to the image of $q(1)$ under the second map.

For an infinite example, consider the poset category $M := \{ \cdots \rightarrow 3 \rightarrow 2 \rightarrow 1 \rightarrow 0\}$, whose objects are the elements of $\mathbb{Z}_{\geq 0}$ with a morphism $a \rightarrow b$ if and only if $b \leq a$. This is a Reedy category where $\textbf{deg}(x) = x$ for all objects $x \in M$. For a levelwise fibrant functor $J : M \rightarrow \textbf{sSet}$, we then have that $R(J)$ is the functor
$$
    J_0 \leftarrow J_1 \times_{J_0} J_0^{\textbf{nerve}(I[1])} \leftarrow J_2 \times_{J_1 \times_{J_0} J_0^{\textbf{nerve}(I[1])}} (J_1 \times_{J_0} J_0^{\textbf{nerve}(I[1])})^{\textbf{nerve}(I[1])} \leftarrow \cdots
$$
Writing down $R(J)_n$ without recursively referring to $R^{\leq n - 1}(J)$ would appear to be difficult; naively `flattening out' the definition produces an exponentially growing expression of nested pullbacks. There is however a simpler non-inductive way to write down this functor, taking inspiration from the construction of homotopy limits in \cite[Def. 18.1.8]{hirschhornModelCategoriesTheir2009}; indeed, if we restrict $J$ to $M^{\leq n}$, then we have that $\lim_{M^{\leq n}} R(J) \cong R(J)_n$, so each $R(J)_n$ should be an inductively growing model of homotopy limit:

\begin{proposition}
    Suppose $J : M \rightarrow \textbf{sSet}$ is as above. Then for any $n \geq 0$, we have
    $$
        R(J)_n \cong \textbf{eq} \Big( \prod_{i = 0}^n J_i^{\textbf{nerve}(I[n-i])} \rightrightarrows \prod_{j=0}^{n-1} J_j^{\textbf{nerve}(I[n-j-1])} \Big)
    $$
    where the first map acts by precomposition with $\textbf{nerve}(I[n-j-1]) \rightarrow \textbf{nerve}(I[n-j])$ sending $i \mapsto i$ on objects, while the second map acts by postcomposition with $J_j \rightarrow J_{j-1}$ if $j > 0$ and the terminal map $J_0 \rightarrow \ast$.

    Moreover, the maps $R(J)_n \rightarrow R(J)_{n-1}$ are given by commutative diagrams
    \[\begin{tikzcd}
    	{\prod_{i = 0}^n J_i^{\textbf{nerve}(I[n-i])}} & {\prod_{j = 0}^{n-1} J_j^{\textbf{\textbf{nerve}}(I[n-j-1])}} \\
    	{\prod_{i = 0}^{n-1} J_i^{\textbf{nerve}(I[n-i-1])}} & {\prod_{j = 0}^{n-2} J_j^{\textbf{\textbf{nerve}}(I[n-j-2])}}
    	\arrow[shift left, from=1-1, to=1-2]
    	\arrow[shift right, from=1-1, to=1-2]
    	\arrow[from=1-1, to=2-1]
    	\arrow[from=1-2, to=2-2]
    	\arrow[shift left, from=2-1, to=2-2]
    	\arrow[shift right, from=2-1, to=2-2]
    \end{tikzcd}\]
    where the leftmost vertical map is given on $J_i^{\textbf{nerve}(I[n-i])}$ for $i < n$ by precomposition with maps $I[n-i-1] \rightarrow I[n-i]$ sending $j \mapsto j+1$ for objects $j$ and on $J_n$ by the terminal map $J_n \rightarrow \ast$. The rightmost vertical map is similar.

    Finally, the maps $(\kappa_J)(n) : J_n \rightarrow R(J)_n$ are given by the maps
    $$
        J_n \rightarrow J_i \rightarrow J_i^{\textbf{nerve}(I[n-i])}.
    $$
\end{proposition}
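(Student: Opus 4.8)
I would argue by induction on $n$, leaning on the recursive pullback that \emph{defines} $R(J)$ in Definition \ref{def:reedy:r_constr}. The base case $n=0$ is immediate: $R(J)_0 = J_0 = J_0^{\textbf{nerve}(I[0])}$ since $\textbf{nerve}(I[0]) = \Delta[0]$, and the indicated equalizer has an empty codomain product, so it collapses to the single factor $J_0$. For the inductive step the key simplification is that, in the tower $M$, the matching category at $n$ has $n-1$ as a terminal (hence final) object, so $M_n R(J) \cong R(J)_{n-1}$; this is exactly how the examples section already wrote $R(J)$ recursively. Thus Definition \ref{def:reedy:r_constr} gives $R(J)_n \cong J_n \times_{R(J)_{n-1}} R(J)_{n-1}^{\textbf{nerve}(I[1])}$, where the leg $J_n \to R(J)_{n-1}$ is $\kappa_J(n-1)$ precomposed with the structure map $J_n \to J_{n-1}$, and the other leg is the source map $s$.

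\textbf{Reorganizing the pullback into an equalizer.} Into this I would substitute the inductive hypothesis for $R(J)_{n-1}$ as an equalizer of products. Since $(-)^{\textbf{nerve}(I[1])}$ is a right adjoint it preserves the limit, so $R(J)_{n-1}^{\textbf{nerve}(I[1])}$ is again an equalizer, now of the factors $\big(J_i^{\textbf{nerve}(I[n-1-i])}\big)^{\textbf{nerve}(I[1])} \cong J_i^{\textbf{nerve}(I[n-1-i]\times I[1])}$, using the exponential law and that $\textbf{nerve}$ preserves products. I would then flatten the outer pullback: the new top factor is $J_n = J_n^{\textbf{nerve}(I[0])}$, and the two gluing conditions of the pullback --- that the path's source ($\textbf{new}=0$) agrees with the $\kappa$-image of $J_n$, and that this $\kappa$-image is a \emph{constant} path (via the map $c_B$ appearing in Definition \ref{def:reedy:kappa_inductive}) --- combine with the inner equalizer's restriction/structure-map conditions to reproduce, factor by factor, precisely the degree-$n$ equalizer conditions $f_j|_{\textbf{nerve}(I[n-j-1])} = (J_{j+1}\to J_j)\circ f_{j+1}$.

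\textbf{The main obstacle.} The heart of the argument, and where I expect essentially all the difficulty to lie, is a combinatorial identification at each factor: I must match the cotensor $J_i^{\textbf{nerve}(I[m]\times I[1])}$ carrying the constant-edge condition coming from $\kappa$ against the cotensor $J_i^{\textbf{nerve}(I[m+1])}$ appearing in the target, where $m = n-1-i$. Both frames $\textbf{nerve}(I[m]\times I[1])$ and $\textbf{nerve}(I[m+1])$ are contractible, and the natural comparison maps are only trivial fibrations, so on the nose the two cotensors are merely \emph{equivalent}; the claim, however, is an \emph{isomorphism}. The delicate point is therefore that the constant-path conditions imposed by $\kappa$ are exactly what rigidify this equivalence into a bijection. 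I would realize the comparison through a collapse/multiplication functor $I[m]\times I[1] \to I[m+1]$ in the spirit of the map $Q$ used to define $D_\bullet$, check that it carries the constant ($\textbf{new}=0$) edge to the collapsed vertex and the data edge ($\textbf{old}=0$) to the edge $\{0,1\}$ of $I[m+1]$, and then verify that, once the constant condition is imposed, precomposition with this functor is inverse to the evident restriction. Getting this bijection to hold naturally for an arbitrary Kan-complex-valued $J$ (rather than only for nerve-valued $J$, where functoriality does the bookkeeping automatically) is the crux; I would prove it by induction alongside the main statement, so that the constant conditions accumulated at every lower level are available.

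\textbf{Transition maps and $\kappa_J$.} These I expect to be routine once the isomorphism is in place. The maps $R(J)_n \to R(J)_{n-1}$ are $R$ applied to the morphism $n\to n-1$, so unwinding the inductive definition of $R$ on morphisms yields the displayed commuting square, whose vertical legs are precomposition with the index-shifting functors $I[n-i-1]\to I[n-i]$ on the factors $i<n$ and the terminal map on $J_n$. Finally, $\kappa_J(n)$ is read off directly from Definition \ref{def:reedy:kappa_inductive}: its component into $J_i^{\textbf{nerve}(I[n-i])}$ is the structure map $J_n \to J_i$ followed by the constant-path inclusion $J_i \to J_i^{\textbf{nerve}(I[n-i])}$, matching the stated formula, with the constancy of these images being exactly the content of Proposition \ref{prop:reedy:db_precomp_const} feeding the equalizer conditions.
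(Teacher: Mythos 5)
Your strategy is, step for step, the paper's own proof: the same base case, the same identification $M_n R(J) \cong R(J)_{n-1}$, the same use of the cotensor being a right adjoint together with the exponential law and $\textbf{nerve}$ preserving products, and the same endgame. The paper interleaves the factors by rewriting $J_n \cong \textbf{eq}\big(\prod_{i=0}^n J_i \rightrightarrows \prod_{j=0}^{n-1} J_j\big)$ and then reduces everything to the factorwise claim
$J_i^{\textbf{nerve}(I[n-i])} \cong J_i \times_{J_i^{\textbf{nerve}(I[n-i-1])}} J_i^{\textbf{nerve}(I[n-i-1]\times I[1])}$,
asserted via the maps $I[n-i] \to \ast$ and $I[n-i] \to I[n-i-1]\times I[1]$, $0 \mapsto (0,0)$, $j \mapsto (j-1,1)$ --- precisely the section/collapse comparison you describe. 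Your readings of the transition maps and of $\kappa_J$ also match what the paper checks by inspection.

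The problem is the step you correctly isolate as the crux and then defer: as you state it (``precomposition with this functor is inverse to the evident restriction''), it fails, and no amount of bookkeeping of accumulated constancy conditions will repair it. The constrained pullback factor is exactly $J_i^{P}$ where $P := \textbf{nerve}(I[m]\times I[1]) \sqcup_{\textbf{nerve}(I[m]\times\{0\})} \Delta[0]$ with $m = n-i-1$, so your rigidification claim amounts to $P \cong \textbf{nerve}(I[m+1])$, i.e.\ to $\textbf{nerve}$ commuting with collapsing the full subgroupoid $I[m]\times\{0\}$. It does not: an $n$-simplex of the nerve of a contractible groupoid is an arbitrary tuple of objects, so for $m=1$ the pushout $P$ has $16-4+1=13$ one-simplices while $\textbf{nerve}(I[2])$ has $9$. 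Concretely, at $n=2$ the data in $R(J)_2$ is a map $\tilde h_0$ on $\textbf{nerve}(I[1]\times I[1])$ constrained only on the collapsed slice $\{(0,0),(1,0)\}$, the slice $\{(0,0),(0,1)\}$, and the image of the section (objects $(0,0),(0,1),(1,1)$); the edge $(1,0)\to(1,1)$, among others, is pinned only at its endpoints, so restriction along the section is a non-injective split surjection (take $J_0$ any Kan complex receiving a monomorphism from $P$, e.g.\ a fibrant replacement via $\mathrm{Ex}^\infty$, and the inclusion gives a point of $R(J)_2$ not factoring through the collapse). What is true is that restriction along the section is a natural trivial fibration split by precomposition with the collapse --- a strong deformation retract, not an isomorphism; your intuition that the constancy conditions rigidify is valid exactly when the $J_i$ are nerves of groupoids, where every map factors through the categorical quotient, but not for general Kan complexes. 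You should be aware that the paper's own proof asserts this same isomorphism in one line with the same comparison maps, so your proposal has located, rather than closed, the soft spot: an airtight argument must either weaken the factorwise (and hence the global) $\cong$ to a natural weak equivalence, or produce a strictification that cannot come from the collapse functor alone.
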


\begin{proof}
    The case $n = 0$ is trivial. Suppose then that the result holds for $R(J)_{n-1}$. We have that $R(J)_n$ is isomorphic to the limit of the diagram
    \begin{align*}
        &J_n \times_{\textbf{eq} \Big( \prod_{i = 0}^{n-1} J_i^{\textbf{nerve}(I[n-i-1])} \rightrightarrows \prod_{j = 0}^{n-2} J_j^{\textbf{\textbf{nerve}}(I[n-j-2])} \Big)} \\
        &\textbf{eq} \Big( \prod_{i = 0}^{n-1} J_i^{\textbf{nerve}(I[n-i-1])} \rightrightarrows \prod_{j = 0}^{n-2} J_j^{\textbf{\textbf{nerve}}(I[n-j-2])} \Big)^{\textbf{nerve}(I[1])} \\
        \cong &J_n \times_{\textbf{eq} \Big( \prod_{i = 0}^{n-1} J_i^{\textbf{nerve}(I[n-i-1])} \rightrightarrows \prod_{j = 0}^{n-2} J_j^{\textbf{\textbf{nerve}}(I[n-j-2])} \Big)} \\
        &\textbf{eq} \Big( \prod_{i = 0}^{n-1} J_i^{\textbf{nerve}(I[n-i-1] \times I[1])} \rightrightarrows \prod_{j = 0}^{n-2} J_j^{\textbf{\textbf{nerve}}(I[n-j-2] \times I[1])} \Big).
    \end{align*}
    Note that
    $$
        J_n \cong \textbf{eq} \Big( \prod_{i = 0}^n J_i \rightrightarrows \prod_{j=0}^{n-1} J_j \Big)
    $$
    where the two maps are induced by the diagrams
    \[\begin{tikzcd}
    	{J_n} & {J_{n-1}} & \cdots & {J_1} & {J_0} \\
    	\ast & {J_{n-1}} & \cdots & {J_1} & {J_0}
    	\arrow[from=1-1, to=2-1]
    	\arrow["id"', from=1-2, to=2-2]
    	\arrow["id"', from=1-4, to=2-4]
    	\arrow["id"', from=1-5, to=2-5]
    \end{tikzcd}\]
    and
    \[\begin{tikzcd}
    	{J_n} & {J_{n-1}} & \cdots & {J_1} & {J_0} \\
    	& {J_{n-1}} & \cdots & {J_1} & {J_0} & \ast
    	\arrow[from=1-1, to=2-2]
    	\arrow[from=1-2, to=2-3]
    	\arrow[from=1-3, to=2-4]
    	\arrow[from=1-4, to=2-5]
    	\arrow[from=1-5, to=2-6]
    \end{tikzcd}\]
    Thus, since limits commute, we may reinterpret $R(J)_n$ as the equalizer
    \begin{align*}
        \textbf{eq} \Big( & J_n \times \prod_{i = 0}^{n-1} J_i \times_{J_i^{\textbf{nerve}(I[n-i-1])}} J_i^{\textbf{nerve}(I[n-i-1] \times I[1])} \\
        \rightrightarrows & J_{n-1} \times \prod_{i=0}^{n-2} J_i \times_{J_i^{\textbf{nerve}(I[n-i-2])}} J_i^{\textbf{nerve}(I[n-i-2] \times I[1])} \Big)
    \end{align*}
    The induction now holds, by the isomorphisms
    $$
        J_i^{\textbf{nerve}(I[n-i])} \cong J_i \times_{J_i^{\textbf{nerve}(I[n-i-1])}} J_i^{\textbf{nerve}(I[n-i-1] \times I[1])}
    $$
    given by the maps $I[n-i] \rightarrow \ast$ and by $I[n-i] \rightarrow I[n-i-1] \times I[1]$ sending $0 \mapsto (0, 0)$ and $j \mapsto (j-1, 1)$ for $0 < j \leq n-i$. The maps $R(J)_n \rightarrow R(J)_{n-1}$ are as needed by inspection, as are the maps $(\kappa_J)_n : J_n \rightarrow R(J)_n$; indeed, the matching object $M_n R^{\leq n-1}(J)$ is just $R(J)_{n-1}$.
\end{proof}

We have that the $0$-simplices of $R(J)_n$ correspond naturally to tuples
$$
    (p_n \in J_n, p_{n-1} : \textbf{nerve}(I[1]) \rightarrow J_{n-1}, \cdots, p_0 : \textbf{nerve}(I[n]) \rightarrow J_0)
$$
of maps $p_i : \textbf{nerve}(I[n-i]) \rightarrow J_i$, where $p_i|_{\textbf{nerve}(I[n-i-1])}$, restricted under the map $\textbf{nerve}(I[n-i-1]) \rightarrow \textbf{nerve}(I[n-i])$ sending $j \mapsto j$ for all $j \in I[n-i-1]$, is equal to the composition
$$
    \textbf{nerve}(I[n-i-1]) \xrightarrow{p_{i+1}} J_{i+1} \rightarrow J_i
$$
for all $i < n$. That is to say, a $0$-simplex corresponds to a commutative diagram
\[\begin{tikzcd}[sep=small]
	{\ast } & {\textbf{nerve}(I[0])} & {\textbf{nerve}(I[1])} & \cdots & {\textbf{nerve}(I[n-1])} & {\textbf{nerve}(I[n])} \\
	& {J_n} & {J_{n-1}} & \cdots & {J_1} & {J_0}
	\arrow["\cong", from=1-1, to=1-2]
	\arrow[from=1-2, to=1-3]
	\arrow["{p_n}"', from=1-2, to=2-2]
	\arrow[from=1-3, to=1-4]
	\arrow["{p_{n-1}}"', from=1-3, to=2-3]
	\arrow[from=1-4, to=1-5]
	\arrow[from=1-5, to=1-6]
	\arrow["{p_1}"', from=1-5, to=2-5]
	\arrow["{p_0}", from=1-6, to=2-6]
	\arrow[from=2-2, to=2-3]
	\arrow[from=2-3, to=2-4]
	\arrow[from=2-4, to=2-5]
	\arrow[from=2-5, to=2-6]
\end{tikzcd}\]

Taking the limit $\lim_M R(J)$ in turn yields a Kan complex whose $0$-simplices are infinite sequences of finite tuples
$$
    ((p_0^0), (p_0^1, p_1^1), (p_0^2, p_1^2, p_2^2), (p_0^3, p_1^3, p_2^3, p_3^3), \cdots)
$$
such that $p_i^i \in (J_i)_0$ for all $i \geq 0$, while the maps
$$
    p_j^i : \textbf{nerve}(I[i-j]) \rightarrow J_j
$$
restrict along the maps $\psi_j^i : I[i-j-1] \rightarrow I[i-j]$ sending $k \mapsto k$ for $k \in I[i-j-1]$ to $p_{j+1}^i$ composed with the map $J_{j+1} \rightarrow J_j$. Moreover, considering the maps $\phi_j^i : I[i-j-1] \rightarrow I[i-j]$ sending $k \mapsto k+1$, we have that the diagrams
\[\begin{tikzcd}
	{\textbf{nerve}(I[i-j-1])} && {\textbf{nerve}(I[i-j])} \\
	& {J_j}
	\arrow["{\textbf{nerve}(\phi_j^i)}", from=1-1, to=1-3]
	\arrow["{p_j^{i-1}}"', from=1-1, to=2-2]
	\arrow["{p_j^i}", from=1-3, to=2-2]
\end{tikzcd}\]
commute.

What we have then is, for each $i \geq 0$, a commutative diagram of the form
\[\begin{tikzcd}
	{\textbf{nerve}(I[0])} && {\textbf{nerve}(I[1])} && {\textbf{nerve}(I[2])} & \cdots \\
	{J_i}
	\arrow["{\textbf{nerve}(\phi^{i+1}_i)}", from=1-1, to=1-3]
	\arrow["{p^i_i}"', from=1-1, to=2-1]
	\arrow["{\textbf{nerve}(\phi^{i+2}_i)}", from=1-3, to=1-5]
	\arrow["{p^{i+1}_i}"', from=1-3, to=2-1]
	\arrow[from=1-5, to=1-6]
	\arrow["{p^{i+2}_i}", from=1-5, to=2-1]
\end{tikzcd}\]
Let $\mathbb{Z}_{\geq 0}$ be equipped with the natural poset category structure. Considering the functor $\phi^i : \mathbb{Z}_{\geq 0} \rightarrow \textbf{Cat}$ sending $[n] \mapsto I[n]$ and the morphism $a \leq a+1$ for $a \in \mathbb{Z}_{\geq 0}$ to $\phi^{i+a}_i$, we could alternatively represent this as a map
$$
    p^i : \text{colim}_{\mathbb{Z}_{\geq 0}} \textbf{nerve}(\phi^i) \rightarrow J_i.
$$
The object $\text{colim}_{\mathbb{Z}_{\geq 0}} \textbf{nerve}(\phi^i)$ is naturally identified with the nerve $\textbf{nerve}(I[\infty])$ of the contractible groupoid $I[\infty]$ whose objects are the elements of $\mathbb{Z}_{\geq 0}$. The maps $I[n] \rightarrow I[\infty]$ then send $k \mapsto n-k$ for all $n \geq 0$.

Consider then the map $\psi : \textbf{nerve}(I[\infty]) \rightarrow \textbf{nerve}(I[\infty])$ induced by the maps $\psi^i_j$. Concretely, this map sends $n \mapsto n+1$ for all $n \in \mathbb{Z}_{\geq 0}$. By the first commutativity condition, we have that a $0$-simplex of $\lim_M R(J)$ precisely corresponds to a commutative diagram of the form
\[\begin{tikzcd}
	{\textbf{nerve}(I[\infty])} & {\textbf{nerve}(I[\infty])} & {\textbf{nerve}(I[\infty])} & \cdots \\
	{J_0} & {J_1} & {J_2} & \cdots
	\arrow["{p^0}"', from=1-1, to=2-1]
	\arrow["\psi"', from=1-2, to=1-1]
	\arrow["{p^1}"', from=1-2, to=2-2]
	\arrow["\psi"', from=1-3, to=1-2]
	\arrow["{p^2}"', from=1-3, to=2-3]
	\arrow[from=1-4, to=1-3]
	\arrow[from=2-2, to=2-1]
	\arrow[from=2-3, to=2-2]
	\arrow[from=2-4, to=2-3]
\end{tikzcd}\]
This seems analogous to another description of homotopy limits along the same diagram $M$, though with target category $\textbf{Top}$, in \cite[Ex. 6.5.6]{riehlCategoricalHomotopyTheory2014}; here, Riehl explains that the homotopy limit of a functor $M \rightarrow \textbf{Top}$ where the maps $J_{i+1} \rightarrow J_i$ are inclusions can be described by a map $f : [0, \infty) \rightarrow J_0$ such that $f([n, \infty)) \subseteq J_n$ for all $n \geq 0$.

The case of $\mathscr{C} = M$ seems in particular to suggest that, given a levelwise fibrant functor $X : \mathscr{C} \rightarrow \textbf{sSet}$, the replacement $R(X)(x)$ for each $x \in \mathscr{C}$ should be at least strongly related to homotopy limits of $X$ over each category $(x \downarrow \mathscr{C}^-)$. Indeed, note that for any $x \in \mathscr{C}$, the natural restriction $R(X)|_{(x \downarrow \mathscr{C}^-)}$ is in fact a Reedy fibrant replacement for $X|_{(x \downarrow \mathscr{C}^-)}$ and thus an injective fibrant replacement. Hence, $R(X)_x$ must be a homotopy limit of the diagram $X|_{(x \downarrow \mathscr{C}^-)}$. This suggests that what we have constructed levelwise for $R(X)$ is a series of inductively defined homotopy limits over finite-degree Reedy categories. It would therefore be of interest to move past our inductive definition of $R$ to see $R(X)$ explicitly at each level. We leave this task to future work.

\printbibliography

\end{document}